\DeclareMathAlphabet{\mathpzc}{OT1}{pzc}{m}{it}
\newcommand{\cA}{{\mathcal A}}
\newcommand{\cB}{{\mathcal B}}
\newcommand{\cC}{{\mathcal C}}
\newcommand{\cD}{{\mathcal D}}
\newcommand{\cE}{{\mathcal E}}
\newcommand{\cG}{{\mathcal G}}
\newcommand{\cL}{{\mathcal L}}
\newcommand{\cO}{{\mathcal O}}
\newcommand{\cN}{{\mathcal N}}
\newcommand{\cS}{{\mathcal S}}
\newcommand{\cV}{{\mathcal V}}
\newcommand{\cJ}{{\mathcal J}}
\newcommand{\cK}{{\mathcal K}}
\newcommand{\bC}{{\mathbb C}}
\newcommand{\bF}{{\mathbb F}}
\newcommand{\bH}{{\mathbb H}}
\newcommand{\bN}{{\mathbb N}}
\newcommand{\bR}{{\mathbb R}}
\newcommand{\bT}{{\mathbb T}}
\newcommand{\bZ}{{\mathbb Z}}
\newcommand{\bV}{{\mathbb V}}
\newcommand{\bW}{{\mathbb W}}
\newcommand{\ve}{\varepsilon}
\newcommand{\vf}{\varphi}
\newcommand{\param}{{\gamma}}
\newcommand{\bRp}{{\bR_{\scriptscriptstyle >}}}
\newcommand{\bRpe}{{\bR_{\scriptscriptstyle \geq}}}
\newcommand{\bNp}{{\bN_{\scriptscriptstyle >}}}
\newcommand{\Id}{{\mathds{1}}}
\newcommand{\balpha}{\boldsymbol \eta}
\newcommand{\bbeta}{\boldsymbol \sigma}
\newcommand{\hbomega}{\hat{\boldsymbol \omega}}
\newcommand{\bell}{\boldsymbol \ell}
\newcommand{\be}{\boldsymbol e}
\newcommand{\bx}{\boldsymbol x}
\newcommand{\Acc}{A}
\newcommand{\Bcc}{B}
\newcommand{\Dcc}{E}
\newcommand{\hAcc}{\widehat A}
\newcommand{\hBcc}{\widehat B}
\newcommand{\hDcc}{\widehat E}
\newcommand{\hL}{\widehat\cL}
\newcommand{\Const}{C_\#}
\newcommand{\const}{c_\#}
\newcommand{\supp}{\operatorname{supp}}
\newcommand\htop{h_{\textrm{top}}}
\newcommand\hV{\widehat V}
\newcommand\bg{{\boldsymbol g}}
\newcommand\bh{{\boldsymbol h}}
\newcommand\boO{{\boldsymbol O}}
\newcommand\oH{{\overline H}}
\newcommand\bgg{{\boldsymbol {\mathfrak g}}}
\newcommand\bphi{{\boldsymbol \phi}}
\newcommand\bpi{{\boldsymbol \pi}}
\newcommand\betas{\beta_{\textrm{ess}}}
\newcommand\pf{Q}
\newlength{\strikelength}
\numberwithin{equation}{section}
\newtheorem{thm}{Theorem}[section]
\newtheorem{lem}[thm]{Lemma}
\newtheorem{sublem}[thm]{Sub-Lemma}
\newtheorem{prop}[thm]{Proposition}
\newtheorem{cor}[thm]{Corollary}
\newtheorem{conj}[thm]{Conjecture}
\newtheorem{defin}[thm]{Definition}
\newtheorem{rem}[thm]{Remark}
\newcommand{\bi}{{\boldsymbol i}}
\newcommand{\bd}{{\boldsymbol d}}
\newcommand{\bv}{{\boldsymbol v}}
\newcommand{\bw}{{\boldsymbol w}}
\newcommand{\nablas}{\widehat\nabla}
\newcommand{\Leb}{\operatorname{Leb}}
\begin{document}

\title[Parabolic dynamics]{Parabolic dynamics and Anisotropic Banach spaces}
\author[P. Giulietti]{Paolo Giulietti}
\address{Paolo Giulietti \\
Scuola Normale Superiore - Centro di Ricerca Matematica Ennio de Giorgi\\ 
Piazza dei Cavalieri 7\\
56126 Pisa, Italy}
\email{{\tt paologiulietti.math@gmail.com}}
\author[C. Liverani]{Carlangelo Liverani}
\address{Carlangelo Liverani\\
Dipartimento di Matematica\\
II Universit\`{a} di Roma (Tor Vergata)\\
Via della Ricerca Scientifica, 00133 Roma, Italy.}
\email{{\tt liverani@mat.uniroma2.it}}
\thanks{L.C. gladly thanks Giovanni Forni for several discussions through the years. Such discussions began in the very far past effectively starting the present work and lasted all along, in particular the last version of Lemma \ref{lem:fuck} owns to Forni ideas.  P.G. thanks L. Flaminio for explaining parts of his work and his hospitality at the university of Lille. Also we would like to thank Viviane Baladi, Alexander Bufetov, Oliver Butterley, Ciro Ciliberto, Livio Flaminio, Fran\c cois Ledrappier, Fr\'ed\'eric Naud, Ren\'e Schoof  and Corinna Ulcigrai for several remarks and suggestions that helped to considerably improve the presentation. We thank the anonymous referee for his hard work and thorough suggestions which forced us to substantially improve the paper.  This work was supported by the European Advanced Grant Macroscopic Laws and Dynamical Systems (MALADY) (ERC AdG 246953). P.G. was also partially supported by CNPq-Brazil project number 407129/2013-8 and by Instituto de Matem\'atica, Universidade Federal do Rio Grande do Sul, Porto Alegre, RS - Brazil. L.C. thanks the IHES for the kind hospitality during the revision of the paper.}

\begin{abstract}
We investigate the relation between the distributions appearing in the study of ergodic averages of parabolic flows (e.g. in the work of Flaminio-Forni) and the ones appearing in the study of the statistical properties of hyperbolic dynamical systems (i.e. the eigendistributions of the transfer operator). In order to avoid, as much as possible, technical issues that would cloud the basic idea, we limit ourselves to a simple flow on the torus. Our main result is that, roughly, the growth of ergodic averages (and the characterization of coboundary regularity) of a parabolic flows is controlled by the eigenvalues of a suitable transfer operator associated to the renormalizing dynamics. The conceptual connection that we illustrate is expected to hold in considerable generality.  
\end{abstract}

\keywords{Horocycle flows, quantitative equidistribution, quantitative mixing, spectral theory, transfer operator.}
\subjclass[2000]{37A25, 37A30, 37C10, 37C40, 37D40}
\maketitle

\section{Introduction} \label{sec:intro}

In the last decade, {\em distributions} have become increasingly relevant both in parabolic and hyperbolic dynamics. On the {\em parabolic dynamics} side consider, for example, the work of Forni and Flaminio-Forni \cite{Forni97, Forni02, FlaminioForni03, FlaminioForni06, FlaminioForni07} on ergodic averages and cohomological equations for horocycle flows or of Bufetov \cite{Bufetov14bis} on translation flows; on the {\em hyperbolic dynamics} side it suffices to mention the study of the transfer operator through anisotropic spaces, started with \cite{BKL02}.\footnote{ But see, e.g., \cite{ruelle-sullivan, Rugh96, Liverani95,  Kitaev99} for earlier related results.}

Since a frequent approach to the study of {\em parabolic dynamics} is the use of renormalization techniques,\footnote{ Typical examples are circle rotations \cite{SinaiKhanin89,SinaiKhanin92}, interval exchange maps via Teichmuller theory \cite{Viana06,ForniMatheus11}, horocycle flow \cite{Forni02, FlaminioForni03}.} where the renormalizing dynamics is often a {\em hyperbolic dynamics},
several people have been wondering on a possible relation between such two classes of distributions. Early examples of such line of thought can be found in Cosentino \cite[Section 3]{Cosentino2005} and Otal \cite{Otal1998}.

In this paper we argue that the distributional obstructions discovered by Forni and the distributional eigenvectors of  certain transfer operators are tightly related, to the point that, informally, one could say that they are exactly the same.

In order to present our argument in the simplest possible manner, instead of trying to develop it for the horocycle flow versus the geodesic flow (which would require a much more technical framework), we consider a very simple example that, while preserving the main ingredients of the horocycle-geodesic flow setting, allows to easily illustrate the argument. Yet, our example is not rigid (morally it corresponds to looking at manifolds of non constant negative curvature). So, notwithstanding its simplicity, it shows the flexibility of our approach, which has the potential of being greatly generalized. On the other hand, we cover only the case of periodic renormalization. Indeed, if the renormalizing dynamics are non linear, then it is not very clear how to define a good moduli space on which to act. The extension of our approach to the non periodic case remains an open problem.

Let us describe a bit more precisely our setting (see Section  \ref{sec:proof} for the exact, less discursive, description). As parabolic dynamics, we consider a flow $\phi_t$, over $\bT^2 =\nicefrac{\bR^2}{\bZ^2}$, generated by a vector field $V\in \cC^{1+\alpha}(\bT^2, \bR^2)$, $\alpha \in \bRp$, such that, for all $x \in\bT^2 $, $V(x) \neq 0$. As hyperbolic dynamics,  we consider a transitive Anosov map $F\in \cC^r(\bT^2,\bT^2)$, $r > 1 + \alpha $. By definition of Anosov map for all $ x \in \bT^2$ we have  $T_x \bT^2 =  E^s(x) \oplus E^u(x)$, where we used the usual  notation for the stable and unstable invariant distributions.\footnote{ Here ``distribution" refers to a field of subspaces in the tangent bundle and has nothing to do with the meaning of ``distribution" as generalized functions previously used. This is an unfortunate linguistic ambiguity for which we bear no responsibility.} Since we want the latter system to act as a renormalizing dynamics for the former, we require,
\begin{equation}\label{eq:renorm-hp}
  \forall x \in\bT^2,\; V(x)\in E^s(x).
\end{equation}

One might wonder which kind of flows admit the property \eqref{eq:renorm-hp} for some Anosov map $F\in\cC^r$. Here is a partial answer whose proof can be found in Appendix \ref{sec:classy}.

\begin{lem}\label{lem:classify} If a $\cC^{1+\alpha}$, $\alpha>0$, flow $\phi_t$, without fixed points, satisfies \eqref{eq:renorm-hp} for some Anosov map $F\in\cC^{r}$, $r\geq 1+\alpha$, then it is topologically conjugated to a rigid rotation with rotation number $\omega$ such that 
\begin{equation}\label{eq:diofanto}
b\,\omega^2+(a-d)\omega-c=0
\end{equation}
 for some $a,b,c,d\in\bZ$ such that $ad-cb=1$.
 
Each $\cC^{1+\alpha}$, $\alpha\geq 1$, flow $\phi_t$ without fixed points, or periodic orbits,  is topologically conjugated to a rigid rotation. If the rotation number satisfies \eqref{eq:diofanto} and $\alpha\geq 2$, then $\phi_t$ satisfies \eqref{eq:renorm-hp} for some Anosov map $F\in\cC^{\beta}$, for each $\beta<\alpha$.
\end{lem}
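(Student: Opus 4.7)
The plan is to reduce everything to the well-established theory of circle diffeomorphisms via first-return maps, identify the rotation number using Franks' topological classification of Anosov maps on $\bT^2$, and finally upgrade topological conjugacy to smooth conjugacy via Herman--Yoccoz.

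For the first statement, the hypothesis $V(x) \in E^s(x)$ forces the orbits of $\phi_t$ to coincide, as unparametrised curves, with the leaves of the stable foliation $\cF^s$ of $F$. By Franks' theorem, $F$ is topologically conjugate to its linearisation, a hyperbolic matrix $A\in SL(2,\bZ)$ with entries $a,b,c,d$ satisfying $ad-bc=1$; under this conjugacy $\cF^s$ corresponds to the linear foliation whose leaves are lines of irrational slope $\omega$ in the stable eigendirection of $A$. Writing the eigenvector relation $A(1,\omega)^T = \lambda(1,\omega)^T$ for the stable eigenvalue $\lambda$ yields $b\omega^2+(a-d)\omega-c=0$. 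Taking a short arc of an unstable manifold of $F$ as a transversal $\Sigma$ to $\phi_t$, the first-return map $R:\Sigma\to\Sigma$ is a $\cC^{1+\alpha}$ circle diffeomorphism whose rotation number equals $\omega$, as can be read off in the linear model (where the analogous return map is $y\mapsto y+\omega \bmod 1$). Denjoy's theorem for $\cC^{1+\alpha}$ circle diffeomorphisms produces a conjugacy of $R$ with the rigid rotation by $\omega$, and suspending it gives the claimed topological conjugacy of $\phi_t$ with the linear flow of slope $\omega$ on $\bT^2$.

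For the second statement the same strategy works with no input from $F$: the absence of fixed points allows one to construct a transverse circle, the absence of periodic orbits forces the return map to have irrational rotation number, and $\cC^{1+\alpha}$ regularity with $\alpha \geq 1$ places us squarely in the Denjoy regime, producing a conjugacy to a rotation and, by suspension, to a linear flow. For the third statement, let $\omega$ satisfy $b\omega^2+(a-d)\omega-c=0$ with $ad-bc=1$, and let $A$ be the associated hyperbolic integer matrix. By the previous step, $\phi_t$ is topologically conjugate, via some homeomorphism $h:\bT^2\to\bT^2$, to the linear flow $\psi_t$ of slope $\omega$. Setting $F := h^{-1}\circ A \circ h$, the stable foliation of $F$ is the $h$-preimage of the stable foliation of $A$, which by construction consists of the orbits of $\phi_t$, so $V(x)\in E^s_F(x)$ holds automatically, and the regularity of $F$ equals that of $h$. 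It thus suffices to show that $h$ is $\cC^\beta$ for every $\beta<\alpha$. Here the arithmetic enters: being a quadratic irrational, $\omega$ is Diophantine of (best possible) exponent $2$, and the sharp regularity theorems of Herman and Yoccoz, applied to the $\cC^{1+\alpha}$ first-return map with $\alpha\geq 2$, yield exactly the regularity $\cC^\beta$, $\beta<\alpha$, for the linearising circle conjugacy, which then lifts to $h$.

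The delicate step is the Herman--Yoccoz regularity assertion in the third statement: one must invoke a sufficiently sharp version so that, for Diophantine exponent $2$, the regularity of the conjugacy is arbitrarily close to that of the diffeomorphism, and then carefully track this regularity through the suspension construction and finally through the conjugation $F = h^{-1}\circ A\circ h$ on the torus.
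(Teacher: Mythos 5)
Your overall strategy coincides with the paper's: reduce to circle return maps, identify the rotation number via the topological conjugacy of $F$ to a linear hyperbolic automorphism $A\in SL(2,\bZ)$ (the paper cites \cite[Theorem 18.6.1]{KH}, you cite Franks; same result), apply Denjoy for the middle statement, and Herman for the last.

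There is, however, a genuine gap in your treatment of the first statement. You take the first-return map $R$ of $\phi_t$ to a transversal, observe it is a $\cC^{1+\alpha}$ circle diffeomorphism with rotation number $\omega$, and then invoke Denjoy's theorem to conjugate $R$ to the rigid rotation. But the first statement only assumes $\alpha>0$, and Denjoy's theorem does \emph{not} hold for $\cC^{1+\alpha}$ circle diffeomorphisms when $\alpha<1$: the required hypothesis is that $\log R'$ have bounded variation (e.g.\ $R\in\cC^2$, or derivative Lipschitz), and there are classical Denjoy counterexamples of class $\cC^{1+\alpha}$ for every $\alpha<1$, with arbitrary irrational rotation number (including quadratic irrationals). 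So your argument, as written, does not establish the first assertion for $\alpha\in(0,1)$. The fix is that you do not need Denjoy at all at this stage: the Franks/KH conjugacy $h$ between $F$ and $A$ carries the stable foliation of $F$ (hence the orbit foliation of $\phi_t$) onto the linear foliation of slope $\omega$, and therefore $h$, restricted to a closed transversal (chosen as the $h$-preimage of a transversal circle in the linear picture), already conjugates $R$ to the rigid rotation $x\mapsto x+\omega$. This is exactly what the paper does; the extra smoothness hypothesis $\alpha\geq 1$ is reserved for the second statement, which is the one where there is no $F$ to supply a free topological conjugacy and Denjoy is truly needed.

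Two smaller remarks. A ``short arc of an unstable manifold'' is an interval, not a closed curve, so it does not immediately yield a circle diffeomorphism; one must take a closed transversal (whose existence, given the absence of fixed points, is classical and is cited in the paper). And in the third statement the paper's regularity bookkeeping is the same as yours: both use that $\omega$, being a quadratic irrational, is Diophantine, that Herman gives a $\cC^\beta$ linearisation for each $\beta<\alpha$ when $\alpha\geq 2$, and that conjugating $A$ by this $\cC^\beta$ map with $\beta\geq 1$ still yields an Anosov map of class $\cC^\beta$ whose stable foliation is, by construction, the orbit foliation of $\phi_t$.
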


Note that the condition \eqref{eq:diofanto} can be restated by saying that $ \omega = r_1 + r_2  \sqrt{D}$ where $r_1 , r_2 \in  \mathbb{Q}, D \in \mathbb{N}, r_2 \neq 0$.
We stated the lemma in the above form because it connects better to the example worked-out in Section \ref{sec:examples}.

\begin{rem}\label{eq:examples} Even though the above Lemma shows that it is always possible to reduce our setting to a linear model by a  conjugation, such  conjugation is typically of rather low regularity. We will see shortly that requiring $F$, and related objects, to be of high regularity is essential for the questions we are interested in. It is not obvious to us how to characterize the flows for which \eqref{eq:renorm-hp} holds for very smooth $F$. Yet, such a condition clearly singles out some smaller class of flows (compared to Lemma \ref{lem:classify}) to which our theory applies.
Note however that there are plenty of examples, see Section \ref{sec:examples}.
\end{rem}

Equation \eqref{eq:renorm-hp} implies that the trajectories $\{\phi_t(x)\}_{t\in(a,b)}$ are pieces of the stable manifolds for the map $F$. Thus, if we assume $r \geq 2 + \alpha$,  we can define implicitly a function $\nu_n\in\cC^{1+\alpha}(\bT^2,\bR)$ such that
\begin{equation} \label{eq:defvn}
D_xF^n V(x)=\nu_n(x)V(F^n(x)), 
\end{equation}
where ${|\nu_n|< \Const \lambda^{-n}}$ for some $\lambda >1$. Without loss of generality we assume that $F$ preserves the orientation of the invariant splitting, i.e. $\nu_n>0$ (if not, use $F^2$). 

Given the hypothesis \eqref{eq:renorm-hp} it is natural to ask, at least, that, for each $x\in\bR^2$, the flow is regular with respect to the time coordinate i.e.  
\begin{equation}\label{eq:smooth-hp}
\phi_{(\cdot)}(x)\in\cC^{r}.
\end{equation}
In fact, we will use slightly stronger hypotheses, see Definition \ref{def:flowmapsetup} and Remark \ref{rem:repara}.

The reader may complain that the parabolic nature of the dynamics $\phi_t$ it is not very apparent. Indeed, a little argument is required to show that $\|D_x\phi_t\|$ can grow at most polynomially in $t$, see Section \ref{subsec:parabolic}, and some more work is needed to show that there are cases in which it is truly unbounded, see Lemma \ref{lem:true-parabolic}.

Let us detail an easy consequence of \eqref{eq:defvn}. If, for each $n\in\bN$, we define $\eta_n\in\cC^{1+\alpha}(\bT^2\times\bR,\bT^2)$ as $\eta_n(x,t)=F^n(\phi_t(x))$, then we have
\[
\left\{ \begin{array}{l}
\frac d{dt}\eta_n(x,t)=D_{\phi_t(x)}F^nV(\phi_t(x))=\nu_n(\phi_t(x)) V(\eta_n(x,t))\\
\eta_n(0)=F^n(x).
\end{array} \right. 
\]
It is then natural to define the time change\footnote{ By construction, for each $x\in\bT^2$ and $n\in\bN$, $\tau_n(x,t)$ is a strictly increasing function of $t$, and hence globally invertible. We will use the, slightly misleading, notation $\tau_n^{-1}(x,\cdot)$ for the inverse.}
\begin{equation}\label{eq:param-tau}
\tau_n(x, t)=\int_0^t ds \, \nu_n(\phi_s(x)),
\end{equation}
and introduce the function $\zeta_n\in\cC^{1+\alpha}$ by $\zeta_n(\tau_n(x,t),x)=\eta_n(x,t)$. Then
\begin{equation}\label{eq:renorm-glob}
\left\{ \begin{array}{l}
\frac{d}{dt}\zeta_n(x,t)=V(\zeta_n(x,t))\\
\zeta_n(x,0)=F^n(x).\\
\end{array} \right. 
\end{equation}
By the uniqueness of the solution of the above ODE, it follows, for all $t\in\bRp$
\[
\phi_{t}(F^n(x))=\zeta_n(x,t)=\eta_n(\tau_n^{-1}(x,t),x)=F^n(\phi_{\tau_n^{-1}(x,t)}(x)).
\]
In other words, the image under $F^n$ of a piece of trajectory, is the reparametrization of a (much shorter) piece of trajectory:
\begin{equation} \label{eq:commute}
F^n  (\phi_t (x)) = \phi_{\tau_{n}(x,t)}(F^n(x)).
\end{equation}
The above is the basic renormalization equation for the flow $\phi_t$ that we will use in the following. 

Note that, by Lemma \ref{lem:classify} and Furstenberg \cite{Furstenberg61}, the flow is uniquely ergodic, since its  Poincar\'e map  is uniquely ergodic. Let $\mu$ be the unique invariant measure. In addition, the flow is also minimal since it is topologically conjugated to a minimal flow (the linear one).

By unique ergodicity, given $g\in\cC^0(\bT^2,\bR)$, $\frac 1t \int_0^t ds \, g\circ \phi_s(x)$ converges uniformly to $\mu(g)$.
We have thus naturally arrived at our
\vskip.2cm
{\bf First question}: How fast is the convergence to the ergodic average?
\vskip.2cm
\noindent The question is equivalent to investigating the precise growth of the functionals $H_{x,t}:\cC^r\to\bR$ defined by
\begin{equation} \label{eq:htau}
H_{x,t}(g) := \int_0^t ds \, g\circ \phi_s(x) .
\end{equation}
Of course, if $\mu(g)\neq 0$, then $H_{x,t}(g)\sim \mu(g)t$, but if $\mu(g)=0$, then we expect a slower growth.
\begin{rem} Note that the growth rate of an ergodic integral for functions of a given smoothness it is not a topological invariant, hence the fact that our systems can be topologically conjugated to a linear model, as stated in Lemma \ref{lem:classify}, is not of much help.
\end{rem}

In the work of Flaminio-Forni \cite{FlaminioForni03} is proven that the functionals \eqref{eq:htau}, there defined for the horocycle flow on a surface of constant negative curvature, have a polynomial growth with exponent determined by a countable number of obstructions. That is, the growth is slower if the function $g$ belongs to the kernel of certain set of functionals. The remarkable discovery of Forni (going back to \cite{Forni97, Forni02}) is that the possible power growths form a discrete set and that the associated obstructions cannot be expected, in general, to be measures: they are {\em distributions}.\footnote{ Apart, of course, for the first that, as above, is the invariant measure $\mu$.}  See Remark \ref{rem:horo} for further details.

In analogy with the above situation, one expects that in our simple model there exist a finite number\footnote{ For an explanation of ``finitely many versus countably many" see, again, Remark  \ref{rem:horo}.} of functionals $\{O_i\}_{i=1, \ldots, N_1}\subset \cC^r(\bT^2,\bR)'$, and a corresponding set $\{\alpha_i \}_{i=1, \ldots, N_1}$ of decreasing numbers $\alpha_i\in [0,1]$ such that if $O_j(g)=0$ for all $j<i$ and $O_i(g)\neq 0$, then $H_{x,t}(g)=\cO(t^{\alpha_i})$. As we mentioned just after equation \eqref{eq:htau}, $O_1(g)=\mu(g)$ with $\alpha_1=1$.

Next, suppose that $O_i(g)=0$ for all $i\leq N_1$ and $\alpha_{N_1}=0$. That is, $H_{x,t}(g)$ remains bounded. By Gottschalk-Hedlund theorem \cite{GH55} this implies that $g$ is a continuous coboundary for the flow (since the flow is minimal). To investigate the regularity of the coboundary it is convenient to start with an alternative proof of Gottschalk-Hedlund theorem (limited to our context). For each $T\in\bRp$, consider the new functionals ${\oH_{T} :\cC^r \to\cC^{1+\alpha}}$ defined by, for all $x\in\bT^2$,
\begin{equation} \label{def:eraverage}
\oH_{T}(g) (x):=  -\int_0^T dt  \,\chi\circ \tau_{n_T}(x,t) g\circ \phi_t(x)= -\int_{\bRpe} dt  \,\chi\circ \tau_{n_T}(x,t) g\circ \phi_t(x), 
\end{equation}
where $n_T+1=\inf\{n\in\bN\;:\; \inf_x\tau_n(x,T)\leq 1\}$ and $\chi\in\cC^r(\bRpe,[0,1])$ 
is a fixed function such that $\chi(s)=1$ for all $s\leq 1/2$ and $\chi(s)=0$ for all $s\geq 1$. Such a function $\chi$ can be thought as a ``smoothing'' of $\overline \chi\circ\tau_{n_T}:=\max\{0,\frac {T-s}{T}\}$. Unfortunately, we cannot use $\overline \chi$  because such a choice would create serious difficulties later on (e.g., in the decomposition carried out in equation \eqref{eq:newstart1}), yet the reader can substitute $\overline \chi$ to $\chi$ to have an intuitive idea of what is going on.

The next Lemma restates Gottschalk-Hedlund and will be proved in Section \ref{sec:measurable-cob}.
\begin{lem} \label{lem:cob-prelim}
For each $g\in\cC^r(\bT^2,\bR)$, $r$ large enough,\footnote{ The condition on $r$ is the same as in Theorem \ref{thm:main}.} such that $\cO_i(g)=0$ for all $i\in\{1,\dots, N_1\}$, we have that $\oH_{T}(g)$ converges uniformly, as $T\to\infty$, to some continuous function $h$ such that
\begin{equation} \label{eq:homology0}
 h\circ\phi_t(x)-h(x) = \int_0^t ds \, g\circ \phi_s(x) . 
\end{equation}
\end{lem}

It follows that  $h$ is weakly differentiable in the flow direction and
\begin{equation} \label{eq:homology}
g(x) =\langle V(x),\nabla h(x)\rangle.
\end{equation}
That is, $g$ is a continuous coboundary.

The existence of a continuous coboundary is of some interest; but much more interesting is the existence of more regular solutions of \eqref{eq:homology} since this plays a role in establishing many relevant properties (see \cite[Sections 2.9, 19.2]{KH}). Hence our 
\vskip.2cm
{\bf Second question}: How regular are the solutions of the cohomological equation \eqref{eq:homology}?
\vskip.2cm
\noindent Following Forni again, we expect that there exist finitely many distributional obstructions $\{O_i\}_{i=N_1+1, \ldots, N_2}\subset \cC^r(\bT^2,\bR)'$ and a set of increasing numbers \newline $\{r_i\}_{i=N_1+1, \ldots, N_2}$, $r_i \in (0,1+\alpha)$ such that, if $O_j(g)=0$ for all $j<i$ and $O_i(g)\neq 0$, then $h\in \cC^{r_i}$.
\begin{rem}\label{rem:low-reg} Note that in the present context, as the flow is only $\cC^{1+\alpha}$, it is not clear if it makes any sense to look for coboundaries better than $\cC^{1+\alpha}$. This reflects the fact that if one looks at the horocycle flows on manifolds of non constant negative curvature, then the associate vector field is, in general, not very regular. On the other hand rigidity makes not so interesting our simple example when both foliations are better than $\cC^2$, \cite[Corollary 3.3]{ghys93}. We will therefore limit ourself to finding distributions that are obstruction to Lipschitz coboundaries, i.e. if $O_i(g)=0$ for all $i\leq N_2$, then $h$ is  Lipschitz (see Theorem \ref{thm:maintwo}). We believe this to be more than enough to illustrate the scope of the method.
\end{rem}

The goal of this paper is to prove the above facts by studying transfer operators associated to $F$, acting on appropriate spaces of distributions. 
In fact, we will show that the above mentioned obstructions $\{O_i\}$ can be obtained from the eigenvectors of appropriate transfer operators associated to $F$. As announced, this discloses the connection between the appearance of distributions in two seemingly different fields of dynamical systems.

\begin{rem} 
As already mentioned, in our model $\phi_t$ plays the role of the horocycle flow, while $F$ the one of the geodesic flow. It is important to notice that most of the results obtained for the horocycle flows (and Flaminio-Forni's results in particular) rely on representation theory, thus requiring constant curvature of the space.  In our context, this would correspond to the assumption that $F$ is a toral automorphism and $\phi_t$ a rigid translation. One could then do all the needed computations via Fourier series (if needed, see Section \ref{rem:Fouriercomputation} for details). It is then clear that extending our approach to more general parabolic flows, e.g. horocycle flows, (which should be quite possible using the results on flows by \cite{GLP13,Faure-Tsujii13, Dyatlov-Zworski}) would allow to treat cases of variable negative curvature, and, more generally, cases where the tools of representation theory are not available or effective, whereby greatly extending the scope of the theory. To our knowledge the only other approach trying generalize the theory in such a direction is contained in the papers \cite{Bufetov, Bufetov14}. However Bufetov's strategy relays on a coding of the system. Hence, it seems to suffer from the same limitations that affect the Markov partition approach to the study of hyperbolic systems. In particular, using coding techniques only a small portion of the transfer operator spectrum is accessible. These are exactly the limitations that the techniques used in this paper were designed to overcome.
It would therefore be very interesting, and (we believe) possible, to extend the present approach to the setting described in \cite{Bufetov14}.
\end{rem}

 The plan of the paper is as follow: in section \ref{sec:proof} we state our exact assumptions, outline our reasoning and state precisely our results, assuming lemmata and constructions which are explained later on. Section \ref{sec:growth} is devoted to our first question and proves our Theorem \ref{thm:main} concerning the distributions arising from the study of the ergodic integrals. Section \ref{sec:cob} deals with our second question and proves our Theorem \ref{thm:maintwo} dealing with the distributions arising from the study of the regularity of the cohomological equation.
Section \ref{sec:examples} is devoted to the discussion of examples. This is a rather long section, yet we think it is important as it explicitly shows how the abstract theory can be concretely applied and gives a clear idea of the type of work involved.
In Section \ref{rem:Fouriercomputation} we work out explicitly the simplest possible situation: a linear flow renormalized by a toral automorphism. Note that in this case 
all the obstructions generate by our scheme reduce, as it should be, to the Lebesgue measure. Yet, even in this simple situation, our scheme unveils much more structure than expected.  In Section \ref{sec:modelp} we present a more general (non linear) one parameter family of systems and we prove that it behaves not so differently from the linear model: either the ergodic integral of a function $g$ grows linearly in time or $g$ is at least a $\cC^{\frac 12- \const \param}$ coboundary, where $\param$ is the perturbative parameter of the family of examples (see Lemmata \ref{thm:lip}, \ref{lem:fuck} and Corollary \ref{cor:cob} for details). In sections \ref{sub:perturb}, \ref{sec:trulyp} we use perturbation theory to show that the flow is, generically, truly parabolic, that is the derivative has a polynomial growth (see Lemma \ref{lem:true-parabolic}).  In the Appendix \ref{sec:classy} we provide the details for some facts mentioned in the introduction without proof. In the Appendices \ref{sec:norms} and \ref{sec:norms-bis} we recall the definition of the various functional spaces needed in the following (adapted to the present setting).
\vskip.2cm
{\bf Notation.} When convenient, we will use $\Const, \const$ to designate a generic constant, depending only on $F$ and $\phi_1$, and $C_{a,b,\dots}$ for a generic constant depending also from $a,b,\dots$. Be advised that the actual value of such constants may change from one occurrence to the next.

\section{Definitions and main Results} \label{sec:proof}
In this section we will introduce rigorously the model loosely described in the introduction and explicitly state our results. Unfortunately, this requires quite a bit of not so intuitive notations and constructions, which call for some explanation. The experienced reader can jump immediately to Theorems \ref{thm:main}, \ref{thm:maintwo} but we do not recommend it in general.

Let $\alpha, r\in\bRp$ with $r>2+\alpha$.
 
We start by recalling the definition of $\cC^r$ Anosov map of the torus.

\begin{defin} \label{def:AnosovMap0}
Let $ F\in \cC^r(\bT^2,\bT^2)$ where $\bT^2 =\nicefrac{\bR^2}{\bZ^2}$. The map is called Anosov if there exists two continuous closed nontrivial transversal cone fields $C^{u,s}:\bT^2\to\bR^2$ which are strictly $DF$-invariant. That is, for each $x\in\bT^2$, 
\begin{equation}\label{eq:cone-inv}
\begin{split}
&D_xF C^u(x)\subset \text{Int } C^u(F(x))\cup\{0\}\\
&D_xF^{-1} C^s(x)\subset  \text{Int } C^s(F^{-1}(x))\cup\{0\}.
\end{split}
\end{equation}
In addition, there exists $C>0$ and $\lambda>1$ such that, for all  $n \in\bN$, 
\begin{equation}\label{eq:asplit1} 
\begin{array}{lll}
\|D_xF^{-n} v\| &> C \lambda^n \|v\|   \quad&\text{ if } 
v \in C^s(x) ; \\ 
\|D_xF^{n} v\|&>  C \lambda^{n}  \|v\|    &\text{ if } v
\in C^u(x).  \\
\end{array}
\end{equation} 
\end{defin}

It is well known that the above implies the following, seemingly stronger but in fact equivalent \cite{KH}, definition
\begin{defin} \label{def:AnosovMap}
Let $ F\in \cC^r(\bT^2,\bT^2)$. The map is called {\em Anosov} if there exists a $DF$-invariant 
$\cC^{1+\alpha}$, $r-1\geq \alpha>0$, splitting $T_x M = E^s(x) \oplus E^u(x)$ and constants $C, \lambda > 1 $
such that for $n \geq 0$ 
\begin{equation}\label{eq:asplit} 
\begin{array}{lll}
\|DF^n v\| &< C \lambda^{-n} \|v\|   \quad&\text{ if } 
v \in E^s ; \\ 
\|DF^{n} v\|& <  C \lambda^{n}  \|v\|    &\text{ if } v
\in E^u.  \\
\end{array}
\end{equation} 
\end{defin}
As already mentioned we assume that the stable distribution $E^s$ is orientable and that $F$ preserves such an orientation.
Further note that, since $F$ is topologically conjugated to a toral automorphism \cite[Theorem 18.6.1]{KH}, $F$ is topologically transitive.

Next, we consider a flow $\phi_t$  generated by a vector field $V$ satisfying the following properties.
\begin{defin} \label{def:flowmapsetup}
Let  the vector field $ V$ be such that
\begin{enumerate}[(i)]
 \item $ V \in \cC^{1+\alpha}(\bT^2,\bR^2)$;
 \item $ \|V\| \in \cC^{r}(\bT^2,\bRp)$;\label{enum:2}
 \item for all $x\in\bT^2$, $ V(x) \neq 0  $; \label{enum:3}
 \item for all $x\in\bT^2$, $ V(x) \in E^s(x) $ \label{enum:4}.
\end{enumerate}
\end{defin}

\begin{rem} \label{rem:repara}
Note that Definitions \ref{def:flowmapsetup}-\eqref{enum:2} and \eqref{enum:4} imply condition \eqref{eq:smooth-hp} since, being $F\in\cC^r$, so are the stable leaves \cite{KH}. In fact, Definition \ref{def:flowmapsetup}-\eqref{enum:2}  essentially implies that we are just considering $\cC^r$ time reparametrizations of the case $\|V\|=1$. Hence, we are treating all the $\cC^r$ reparametrizations on the same footing. This is rather convenient although not so deep in the present context. Yet, it could be of interest if the present point of view could be extended to the study of the mixing speed  of the flow. Indeed, there is a scarcity of results on reparametrization of parabolic flows (see \cite{Forni-Ulcigrai} for recent advances).
\end{rem}
Remembering  \eqref{eq:commute}, \eqref{eq:param-tau} and using the definition \eqref{eq:htau},
\begin{equation}\label{eq:preliminary}
\begin{split}
H_{x,t}(g)&=\int_0^t ds \,g\circ F^{-n}\circ\phi_{\tau_n(x,s)}(F^n(x))\\
&=\int_0^{\tau_n(x,t)} ds_1\, \left(\frac{g}{\nu_n}\right)\circ F^{-n}\circ\phi_{s_1}\circ F^n(x).
\end{split}
\end{equation}
It is then natural to introduce the {\em transfer operator} $\cL_F \in L(\cC^0,\cC^0)$,\footnote{ Given a map $F$, in general a {transfer operator} associated to $F$ has the form $\vf\to\vf\circ F^{-1}e^{\phi}$ for some function $\phi$. Normally, the factor $e^\phi$ is called the {\em weight} while $\phi$ is the {\em potential},  \cite{Baladi00}.} 
 \begin{equation} \label{eq:operator}
 \begin{split}
 \cL_F (g) :=&  (\nu_1\circ F^{-1})^{-1} g\circ F^{-1}= g \circ F^{-1}\frac{\|V\|}{\|DFV\|\circ F^{-1}} \\ 
 =&g \circ F^{-1}\frac{\|DF^{-1}V\|}{\|V\|\circ F^{-1}},
 \end{split}
 \end{equation}
where we have used \eqref{eq:defvn}.

 We can now write
 \begin{equation}\label{eq:basic-step}
 H_{x,t}(g)=\int_0^{\tau_n(x,t)}\hskip-.6cm ds_1\; (\cL_F^n g)(\phi_{s_1}(F^n(x)))=H_{F^n(x),\tau_n(x,t)}(\cL_F^ng).
 \end{equation}
 
 The above formula is quite suggestive: for each $x\in\bT^2$ and $t\in\bRp$, if we fix $n=n_t(x)$ such that $\tau_{n_t}(x,t)$ is of order one, then  $H_{x,t}(g)$ is expressed in terms of very similar functionals of $\cL_F^{n_t} g$. Note that such functionals are uniformly bounded on $\cC^0$ with respect to $(x,t)$, that is: they can be seen as measures and, as such, they have uniform total variation. One can then naively imagine that to address the questions put forward in the introduction it suffices to understand the behavior of $\cL_F^n$ for large $n$. This obviously is determined by the spectral properties of $\cL_F$.

 Unfortunately, it is well known that the spectrum of $\cL_F$ depends strongly on the Banach space on which it acts. For example, in the trivial case when $F$ is a toral automorphism and $\phi_t$ a rigid translation with unit speed, $e^{-\htop}\cL_F$ acting on $L^2$ is an isometry,\footnote{ As usual $\htop$ stands for the topological entropy of the map $F$ we are considering.} hence the spectrum of $\cL_F$ consists of the circle of radius $e^{\htop}$. The spectrum on $\cC^0$ it is not much different. On the contrary, if we consider $\cL_F$ acting on $\cC^r$, then the spectral radius will be given by $e^{(r+1)\htop}$. 
  
This seems to render completely hopeless the above line of thought.

Yet, as mentioned in the introduction, it is possible to define norms $\| \cdot \|_{p,q}$ and associated anisotropic Banach spaces $\cC^{p+q}\subset\cB^{p,q}\subset (\cC^q)'$,  $p \in \bN^*, q \in \bRp$, $p+q \leq r $, such that each transfer operator with $\cC^r$ weight  can be continuously extended to $\cB^{p,q}=\overline{\cC^r}^{\|\cdot\|_{p,q}}$.  The above are spaces of distributions (a fact that the reader might find annoying) but, under mild hypotheses on the weight used in the operators, several remarkable properties hold true\footnote{ Before \cite{BKL02} it was unclear if spaces with such properties existed at all. Nowadays there exists a profusion of possibilities. We use the ones stemming from \cite{GouezelLiverani06, GouezelLiverani08} because they seem particularly well 
suited for the task at hand, but any other possibility (e.g.  \cite{BaladiTsujii00, BaladiTsujii08}) should do.} 
\begin{enumerate}[i)]
\item a transfer operator (with $\cC^r$ weight) extends by continuity from $\cC^r$ to a bounded operator on $\cB^{p,q}$; 
\item such a transfer operator is a quasi-compact operator with a simple maximal eigenvalue; 
\item the essential spectral radius of the transfer operator decreases exponentially with $\inf\{q,p\}$;
\item the point spectrum is stable with respect to deterministic and random perturbations.
\end{enumerate}

The possibility to make the essential spectrum arbitrarily small, by increasing $p$ and $q$, will play a fundamental role in our subsequent analysis. Unfortunately, a further problem now arises: the weight of $\cL_F$ contains the vector field $V$ which, by hypothesis, is only $\cC^{1+\alpha}$. Hence $\cL_F$ leaves $\cC^r$ invariant only for $r\leq 1+\alpha$ (exactly the range in which we are not interested). Again it seems that we cannot use our strategy in any profitable manner. 

Yet, such a problem has been overcame as well, e.g., in \cite{GouezelLiverani08}. The basic idea is to extend the dynamics $F$ to the oriented Grassmannian. Indeed, looking at \eqref{eq:operator}, it is clear that the weight can be essentially interpreted as the expansion of a volume form. The simplest idea would then be to let the dynamics act on one forms on $\bT^2$. Unfortunately, the length cannot be written exactly as a volume form on $\bT^2$, hence the convenience of being a bit more sophisticated: the weight of $\cL_F$ can be written as the expansion of a one dimensional volume form on the vector space containing $V$. As $V$ is exactly the tangent vector to the curves along which we 
integrate, we are led, as in \cite{GouezelLiverani08}, to consider functions on the Grassmannian made by one dimensional subspaces. However, in the simple case at hand, the construction in \cite{GouezelLiverani08} can be considerably simplified. Namely, we can limit ourselves to considering the compact set $\Omega_*=\{(x,v)\in \bT^2\times\bR^2\;:\; \|v\|=1,\, v\in \overline{C^s(x)}\}$.\footnote{ Note that $\Omega_*$ is a subset of the unitary tangent bundle of $\bT^2$.} Moreover, since we have assumed that the stable distribution is orientable, then $\Omega_*$ is the disjoint union of two sets (corresponding to the two possible orientations). Let $\Omega$ be the connected component that contains the elements $(x,\hV(x))$, where $\hV(x)=\|V(x)\|^{-1} V(x)$. In addition, since we have also assumed that $F$ preserves the orientation of the stable distribution, calling $\bF$ the lift to the unitary tangent bundle of $F$ we have $\Omega_0=\bF^{-1}(\Omega)\subset \Omega$. 

Thus we have that $\bF:\Omega_0\subset\Omega\to\Omega$ is defined as 
\[
\begin{split}
&\bF(x,v)=(F(x), \|D_xFv\|^{-1}D_xFv),\\
&\bF^{-1}(x,v)=(F^{-1}(x), \|D_xF^{-1}v\|^{-1}D_xF^{-1}v).
\end{split}
\]
Also note that
\begin{equation}\label{eq:inv-stable-vec}
\bF^{-1}(x,\hV(x))=(F^{-1}(x),\hV(F^{-1}(x))).
\end{equation}
Hence, if we define the natural extension $\bphi_t(x, v)=(\phi_t(x), \|D_x\phi_tv\|^{-1}D_x\phi_tv)$, remembering \eqref{eq:commute}, we have\footnote{ See equation \eqref{eq:a-palle0} if details are needed.} 
\[
\begin{split}
&\bF^n  (\bphi_s (x,v)) = (F^n  (\phi_t (x)),\|D_x[F^n \circ \phi_t] v\|^{-1} D_x[F^n \circ \phi_t] v)\\
&=\left(\phi_{\tau_{n}(x,t)}(F^n(x)),\frac{D_{F^n(x)}\phi_{\tau_n(x,t)}D_xF^nv+V(\phi_{\tau_{n}(x,t)}(F^n(x))\langle\nabla\tau_n(x,t),v\rangle}{\|D_{F^n(x)}\phi_{\tau_n(x,t)}D_xF^nv+V(\phi_{\tau_{n}(x,t)}(F^n(x))\langle\nabla\tau_n(x,t),v\rangle\|}\right).
\end{split}
\]
The above formula does not look very nice, however we will be only interested in integrations along the flow direction. Accordingly, by \eqref{eq:defvn} and 
\begin{equation} \label{eq:flipvphi}
 D_x \phi_s (V(x)) = D_x \phi_s  \left. \frac{d}{d\tau} \phi_\tau (x) \right|_{\tau = 0} =  \left. \frac{d}{d\tau} \phi_{s+\tau} (x) \right|_{\tau = 0}=  V(\phi_s(x)),\end{equation}
we have $D_{F^n(x)}\phi_{\tau_n(x,t)}D_xF^n V(x)=\nu_n(x)V(\phi_{\tau_{n}(x,t)}(F^n(x))$.
Hence, limited to $v=\hV(x)$, we recover an analogue of \eqref{eq:commute}:
\begin{equation}\label{eq:commute2}
\bF^n  (\bphi_s (x,\hV(x))) = \bphi_{\tau_{n}(x,s)}(\bF^n(x, \hV(x))).
\end{equation}
\begin{rem}\label{rem:hyp-ext}
Note that $\bF$ is itself a uniformly hyperbolic map with the two dimensional repellor $\{(x,v)\in\Omega\;:\; v=\widehat{V}(x)\}$ and it has an invariant splitting of the tangent space with two dimensional unstable distribution and one dimensional stable.
\end{rem}
Next, we define the transfer operator associated to $\bF: \cC^0(\Omega_0,\bR) \to \cC^0(\Omega,\bR)$ as
\begin{equation}\label{eq:new-to} 
\cL_{\bF} \bg(x,v)=\bg\circ \bF^{-1}(x,v) \frac{\|D_xF^{-1}v\|\,\|V(x)\|}{ \|V\circ F^{-1}(x)\|}.
\end{equation}

The key observation is that $\bpi\circ \bF^{-1}=F^{-1}\circ \bpi$, where we have introduced the projection $\bpi(x,v)=x$. Hence, for each function $g\in \cC^r(\bT^2,\bR)$, if we define $\bg=\bpi^*g:=g\circ \bpi$, then $\bg\in\cC^r(\Omega,\bR)$ and we have, for all $n\in\bN$,
\begin{equation}\label{eq:LtoL}
\cL_{\bF}^n\bg(x,\hV(x))=\cL_F^n g(x).
\end{equation}
The above shows that understanding the properties of $\cL_\bF$ allows to control $\cL_F$. In addition, from definition \eqref{eq:new-to} it is apparent that $\cL_\bF(\cC^{r-1}(\Omega,\bR))\subset \cC^{r-1}(\Omega,\bR)$.\footnote{ Recall Definition \ref{def:flowmapsetup}-\eqref{enum:2}.} We have thus completely eliminated the above mentioned regularity problem.

Accordingly, we define, for each $\bg\in\cC^0(\Omega,\bR)$, $t\in\bRp$ and $x\in\bT^2$, the new functional
\begin{equation}\label{eq:HH-func}
\bH_{x,t}(\bg)=\int_0^t ds \, \bg\left(\phi_s(x), \hV(\phi_s(x))\right)
\end{equation}
and easily obtain an analogue of \eqref{eq:basic-step} for the operator $\cL_\bF$.
\begin{lem}\label{lem:adjust} 
For each $\bg\in\cC^0(\Omega,\bR)$ and $g\in\cC^0(\bT^2,\bR)$, $n\in\bN$, $t\in\bRp$ and $x\in\bT^2$ we have
\[
\begin{split}
&\bH_{x,t}(g\circ \bpi)=H_{x,t}(g)\\
&\bH_{x,t}(\bg)=\bH_{F^n x,\tau_n(x,t)}(\cL_{\bF}^n\bg) .
\end{split}
\]
\end{lem}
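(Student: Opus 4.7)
\textbf{Proof plan for Lemma \ref{lem:adjust}.}

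The first identity is immediate: since $\bpi(y,v)=y$, definition \eqref{eq:HH-func} gives
$\bH_{x,t}(g\circ\bpi)=\int_0^t ds\, g(\phi_s(x))=H_{x,t}(g)$. So the work is entirely in the second identity.

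The plan is to transport the integral from $\bT^2$ up to the distinguished section $S:=\{(x,\hV(x)):x\in\bT^2\}\subset\Omega$, and exploit the fact that both $\bphi_t$ and $\bF$ leave $S$ invariant. For $\bphi_t$ this invariance follows from \eqref{eq:flipvphi}: since $D_x\phi_s V(x)=V(\phi_s(x))$, normalising gives $\bphi_s(x,\hV(x))=(\phi_s(x),\hV(\phi_s(x)))$. For $\bF$ it is the forward version of \eqref{eq:inv-stable-vec}, which upon iteration yields $\bF^n(x,\hV(x))=(F^n(x),\hV(F^n(x)))$. Combining these two invariances with the renormalisation identity \eqref{eq:commute2} gives the key geometric fact
\[
\bF^n(\phi_s(x),\hV(\phi_s(x)))=\bigl(\phi_{\tau_n(x,s)}(F^n(x)),\,\hV(\phi_{\tau_n(x,s)}(F^n(x)))\bigr).
\]

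Next I would compute the weight of $\cL_{\bF}^n$ restricted to $S$. Writing $z_j=\bF^j(z_0)$ with $z_0=(\phi_s(x),\hV(\phi_s(x)))$, so that $z_j\in S$ for every $j$, one has $(\cL_{\bF}^n\bg)(z_n)=\bg(z_0)\prod_{j=1}^n w(z_j)$ with $w(x,v)=\frac{\|D_xF^{-1}v\|\,\|V(x)\|}{\|V\circ F^{-1}(x)\|}$. Applying $(D_{\bpi(z_{j-1})}F)^{-1}$ to both sides of the identity $D_{\bpi(z_{j-1})}F\,V(\bpi(z_{j-1}))=\nu_1(\bpi(z_{j-1}))V(\bpi(z_j))$ and normalising, one gets
\[
\|D_{\bpi(z_j)}F^{-1}\hV(\bpi(z_j))\|=\nu_1(\bpi(z_{j-1}))^{-1}\,\frac{\|V(\bpi(z_{j-1}))\|}{\|V(\bpi(z_j))\|}.
\]
Substituting into the expression for $w(z_j)$ makes the $\|V\|$ factors telescope, leaving $w(z_j)=\nu_1(\bpi(z_{j-1}))^{-1}$. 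Since $\nu_n=\prod_{j=0}^{n-1}\nu_1\circ F^j$ by the chain rule applied to \eqref{eq:defvn}, the product collapses to $\prod_{j=1}^n w(z_j)=\nu_n(\phi_s(x))^{-1}$, so that
\[
(\cL_{\bF}^n\bg)\bigl(\phi_{\tau_n(x,s)}(F^n(x)),\hV(\phi_{\tau_n(x,s)}(F^n(x)))\bigr)=\nu_n(\phi_s(x))^{-1}\bg(\phi_s(x),\hV(\phi_s(x))).
\]

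Finally I would change variables in the defining integral for $\bH_{F^n(x),\tau_n(x,t)}(\cL_{\bF}^n\bg)$, setting $s_1=\tau_n(x,s)$; by \eqref{eq:param-tau} one has $ds_1=\nu_n(\phi_s(x))\,ds$, and this Jacobian precisely cancels the $\nu_n(\phi_s(x))^{-1}$ coming from the weight, yielding $\int_0^t ds\,\bg(\phi_s(x),\hV(\phi_s(x)))=\bH_{x,t}(\bg)$. The only real content is the weight computation; the rest is bookkeeping. I expect the cleanest writeup to isolate the two invariances of $S$ as a short preliminary observation (essentially recording the remark immediately after \eqref{eq:commute2}), so that the telescoping computation appears in its most transparent form.
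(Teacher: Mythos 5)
Your proof is correct and follows essentially the same route as the paper's: both rely on the invariance of the section $\{(x,\hV(x))\}$ under $\bphi_t$ and $\bF$, the renormalisation identity \eqref{eq:commute2}, the change of variable $s_1=\tau_n(x,s)$, and the fact that the cumulated weight of $\cL_{\bF}^n$ on that section collapses to $\nu_n^{-1}$. The paper compresses these steps by inserting $\bF^{-n}\circ\bF^n$ and treating the weight identity as known from the display following \eqref{eq:new-to}, whereas you spell out the telescoping explicitly; the content is the same.
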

\begin{proof}
The proof of the first formula is obvious by the definition, the second follows by direct computation using \eqref{eq:commute2}:
\[
\begin{split}
\bH_{x,t}(\bg)&=\int_0^t ds \, \bg\left(\phi_s(x), \hV(\phi_s(x))\right) =\int_0^t ds \, \bg\circ\bF^{-n}\circ\bF^n\circ\bphi_s(x, \hV(x))\\
&=\int_0^{\tau_n(x,t)} ds_1\, \nu_n(\bpi\circ\bF^{-n}\circ\bphi_{s_1}(\bF^n(x,\hV(x))))^{-1}\bg\circ \bF^{-n}\circ\bphi_{s_1}\circ\bF^n(x, \hV(x))\\
&=\int_0^{\tau_n(x,t)} ds_1(\cL_{\bF}^n \bg)(\bphi_{s_1}(F^n(x),\hV(F^n(x))))=\bH_{F^n x,\tau_n(x,t)}(\cL_{\bF}^n\bg) .
\end{split}
\]
\end{proof}

As already mentioned, the basic fact about the operator $\cL_{\bF}$ is that there exists Banach spaces $\cB^{p,q}$,\footnote{ These are more general with respect to the previously mentioned ones. We use the same name to simplify notation and since no confusion can arise.} detailed in Appendix \ref{sec:norms}, to which $\cL_\bF$ can be continuously extended.\footnote{ By a slight abuse of notations we will still call $\cL_\bF$ such an extension.} Moreover, in  Appendix \ref{sec:norms} we prove the following result.
\begin{prop}\label{th:base}
Let $F \in \cC^r(\bT^2,\bT^2)$ be an Anosov map. Let $p \in \bN^*$ and $q \in \bR$ such that 
 $p + q  \leq  r$ and  $q > 0 $. 
Let $\rho= \exp(\htop)$ where $\htop$ is the topological entropy of $F$.
Then the spectral radius of $\cL_{\bF}$ on $\cB^{p,q}$ is $\rho$ and its essential spectral radius is at most $\rho \lambda^{-\min\{p,q\}}$. In addition, $\rho$ is a simple eigenvalue of $\cL_{\bF}$ and all the other eigenvalues are strictly smaller in norm.\footnote{ The conditions on $ p, q$ are not optimal. The lack of optimality begin due to the fact that we require $p\in \bN$. See \cite{Baladi-Tsujii08}, and reference therein, for different approaches that remove such a constraint.}
 \end{prop}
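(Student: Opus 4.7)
My plan is to derive Proposition \ref{th:base} by the standard Lasota--Yorke plus compactness route (Hennion's theorem), adapted to the anisotropic spaces $\cB^{p,q}$ of Appendix \ref{sec:norms}, exactly in the spirit of \cite{GouezelLiverani06, GouezelLiverani08}. The three ingredients I need are: (a) a quasi-compactness inequality for $\cL_\bF^n$ with multiplicative prefactor $\rho^n$ and a weaker-norm remainder multiplied by $(\rho\lambda^{-\min\{p,q\}})^n$; (b) compact embedding of $\cB^{p,q}$ into the weak space used in (a); (c) a peripheral-spectrum analysis via topological transitivity yielding simplicity of $\rho$.

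For (a), I would work in a finite atlas of local charts for $\Omega$ aligned with the hyperbolic splitting of $\bF$ guaranteed by Remark \ref{rem:hyp-ext} (2-dimensional unstable, 1-dimensional stable). The norms $\|\cdot\|_{p,q}$ measure up to $p$ derivatives along admissible unstable leaves, tested against $\cC^q$ functions. Iterating $\cL_\bF$ has two effects on test-function integrals supported on a small unstable leaf $W$: the leaf is spread under $\bF^n$ into a long unstable curve which is then covered by a bounded number of admissible leaves after chopping by a partition of unity, and each derivative falling on the pulled-back test function picks up a contraction factor bounded by $\lambda^{-n}$ from $D\bF^{-n}|_{E^u}$, yielding the $\lambda^{-pn}$ gain. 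Integration by parts on admissible leaves transfers the remaining derivatives onto the test function, producing the additional $\lambda^{-qn}$ gain when we reduce the norm index to $\cB^{0,p+q}$. The overall multiplicative prefactor $\rho^n = e^{n\htop}$ is produced by the weight: in the setting at hand the weight reduces, along the leaf $\{(x,\hV(x))\}$, to the stable Jacobian $\|D_xF^{-1}V\|/\|V\circ F^{-1}\|$, and the $n$-fold sum over preimages of this quantity grows exactly like $e^{n\htop}$ by the Bowen-type volume estimate for Anosov maps on $\bT^2$ (using that $F$ is topologically conjugate to a toral automorphism with entropy $\htop$, \cite[Theorem 18.6.1]{KH}). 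Ingredient (b), the compact embedding of $\cB^{p,q}$ into $\cB^{p-1,q+1}$, is a direct Arzelà--Ascoli argument on admissible leaves, performed once for all in Appendix \ref{sec:norms}.

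Applying Hennion's theorem to (a) and (b) yields quasi-compactness with essential spectral radius at most $\rho \lambda^{-\min\{p,q\}}$. The spectral radius is exactly $\rho$: the upper bound comes from the Lasota--Yorke inequality, while the lower bound is obtained by pairing $\cL_\bF^n$ with the pullback to $\Omega$ of the SRB measure of $F$, concentrated on $\{(x,\hV(x))\}$, which is a non-trivial element of the dual $(\cB^{p,q})'$ and satisfies the Ruelle eigenvalue equation at $\rho$ by the Bowen count above. For (c), I would mimic the classical Perron--Frobenius / Ruelle argument: any eigendistribution of modulus $\rho$ inherits, through the Lasota--Yorke estimate iterated backwards, a leafwise positivity/regularity structure that, combined with topological transitivity of $\bF$ on the invariant set $\{(x,\hV(x))\}$ (inherited from transitivity of $F$), forces one-dimensionality of the $\rho$-eigenspace and triviality of any peripheral cyclic group; the strict separation from the remaining spectrum then follows from topological mixing of $F$.

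The main obstacle is producing the correct constant $\rho^n$ in the Lasota--Yorke inequality: the naive bound gives the supremum of the $n$-fold product of weights, which is exponentially larger than $e^{n\htop}$, and one must exploit cancellations between the sum over preimages and the partition of unity in the admissible-leaf setup to recover the sharp Bowen rate. Once this sharp rate is in place, everything else follows from the general theory developed in \cite{GouezelLiverani06, GouezelLiverani08}, the details being deferred to Appendix \ref{sec:norms}.
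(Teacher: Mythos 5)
Your proposal and the paper's actual proof take genuinely different routes to Proposition~\ref{th:base}. You propose to rebuild the Lasota--Yorke / Hennion machinery from scratch for $\cL_\bF$ on $\cB^{p,q}$, estimating the weight and the partition-of-unity count directly. The paper instead uses an algebraic shortcut: it introduces the multiplication operator $\Xi\bg=\|V\|\bg$ and the identification $\boldsymbol i:\cS^r\to\cC^r(\Omega,\bC)$ between sections of the dual line bundle over the Grassmannian and functions on $\Omega$, and shows by an explicit change of variables that ${\boldsymbol i}^{-1}\Xi^{-1}\cL_\bF\Xi{\boldsymbol i}=F_*$, the push-forward operator studied in~\cite{GouezelLiverani08}. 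Since $\Xi$ and ${\boldsymbol i}$ are bounded with bounded inverses on the relevant spaces, the spectrum of $\cL_\bF$ on $\cB^{p,q}$ coincides with that of $F_*$ on $\cB^{p,q,1}$, and the spectral radius, essential spectral radius, simplicity of the leading eigenvalue and spectral gap are all imported wholesale from~\cite[Prop.~4.4, Thm.~5.1, Thm.~6.4]{GouezelLiverani08}. This is considerably shorter and, crucially, it sidesteps exactly the obstacle you flag at the end: the paper never needs to reprove that the sharp multiplicative prefactor is $e^{n\htop}$, because the conjugation identifies the operator with one for which the variational computation of the spectral radius has already been carried out. Your route would eventually work, but it amounts to re-deriving Theorem~5.1 of~\cite{GouezelLiverani08} rather than using it.

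Two smaller points in your sketch deserve a caveat. First, the eigenfunctional of $\cL_\bF'$ at $\rho=e^{\htop}$ is associated with the measure of maximal entropy of $F$ (the equilibrium state realizing the variational principle that produces $\htop$), not with the SRB measure; these differ unless $F$ preserves Lebesgue. Pairing with the pullback of the SRB measure would give the wrong rate and hence the wrong lower bound on the spectral radius in general. Second, $F$ is a diffeomorphism, so there is no ``sum over preimages''; the weight along an orbit is a single cocycle product, and the Bowen-type input enters only through the number of admissible leaves needed to cover $F^{-n}W$ combined with a genuine variational argument --- the naive supremum of the cocycle overshoots $e^{n\htop}$, which is precisely why the direct Lasota--Yorke bound is delicate and why the paper's conjugation trick is the efficient move here.
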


This last result has finally made precise our original naive idea: now the operator $\cL_\bF$ has a nice spectral picture and Lemma \ref{lem:adjust} shows that $H_{x,t}(g)$ can be written in terms of similar functionals acting on $\cL_\bF^n\bg$. Yet, a last difficulty appears: the $\bH_{x,s}(\cdot)$, $s\leq \Const$, although uniformly bounded as functionals on $\cC^0$, are not uniformly bounded on $\cB^{p,q}$, in fact when acting on $\cB^{p,q}$, for $p>0$, they are not even continuous functionals\,!\footnote{ This is due to the sharp cut-off of the test function at zero and $s$, see Lemma \ref{lem:boundedfunctional}.} This last obstacle can be dealt with by a more sophisticated representation of $\bH_{x,t}(\cdot)$ in terms of uniformly bounded elements of $(\cB^{p,q})'$ plus a measure with total variation uniformly bounded in $x, t$. Such a representation is achieved in Lemma \ref{eq:molli} which provides the last ingredient needed to close the argument.

Before being able to state precisely our first result we need another little bit of notation. Let $\{\boO_{i,j}\}_{j=1}^{\bd_i}\subset (\cB^{p,q})'$ be the elements of a base of the eigenspaces associated to the discrete eigenvalues $\{\rho_i\}_{i\geq 1}$, $|\rho_i|> \exp(\htop)\lambda^{-\min\{p,q\}}$, of $\cL_\bF'$ when acting on $(\cB^{p,q})'$, $p+q\leq r-1$.\footnote{ Remark that the compact pat of the spectrum of $\cL_{\bF}$ and $\cL_{\bF}'$ coincide (see \cite[Remark 6.23]{Kato}).} Since $\cC^{p+q}\subset \cB^{p,q}$, we have $(\cB^{p,q})'\subset(\cC^r)'$. Hence $\{\boO_{i,j}\}\subset \cC^r(\Omega,\bC)'$. We then define $\tilde O_{i,j}=\bpi_*\boO_{i,j}$, clearly $\tilde O_{i,j}\in\cC^r(\bT^2,\bC)'$. Note that $\bpi_*$ is far from being invertible, so many different distributions could be mapped to the same one. Thus the dimension $d_j$ of the span of $\{\tilde O_{i,j}\}_{j=1}^{\bd_i}$ will be, in general, smaller than $\bd_j$ (see Section \ref{rem:Fouriercomputation} for an explicit example). Let us relabel a subset of the $\tilde O_{i,j}$ so that the $\{\tilde O_{i,j}\}_{j=1}^{d_i}$ are all linearly independent and set $D_k=\sum_{i\leq k}d_i$. For convenience, let us relabel our distributions $\{O_i\}$, by $O_i=\tilde O_{k,l}$ for $i\in [D_k+1, D_{k+1}]$ and $l=i-D_k$.
\begin{thm}\label{thm:main}
Provided $r$ is large enough,\footnote{ For example, $e^{\htop}\lambda^{-r/2}< 1$ suffices. We refrain from giving a more precise characterization of the minimal $r$ since, in the present context, it is not very relevant.} there exists $N_1$ such that the distributions ({\em obstructions}) $\{O_i\}_{i=1}^{N_1}\subset \cC^r(\bT^2,\bC)'$ have the following properties. For each $i\leq N_1$, let  $\bV_i=\{g\in\cC^r(\bT^2,\bC)\;:\;O_j(g)=0\; \forall j< i\;; O_i(g)\neq 0\}$. Then there exists $C,\delta>0$ such that, for all $g\in \bV_i$, there exists functions $\hat\ell_{k,j}\in L^\infty(\bT^2\times \bRp)$ such that, for all $t\in\bR_>$ and $x\in\bT^2$, we have
\[ 
\left|  H_{x,t}(g) - t^{\alpha_k}\sum_{j=0}^{b_k-i+D_{k-1}}(\ln t)^{j} \hat\ell_{k,j}(x,t)\right| \leq\begin{cases}& C\, t^{\alpha_k-\delta} \|g\|_{\cC^r}\textrm{ if } \alpha_k > 0 \\ 
&C\, \|g\|_{\cC^r} \textrm{ if } \alpha_k =  0, 
\end{cases}
\]
where $i\in (D_{k-1},D_k]$, $\alpha_k=\frac{\ln|\rho_k|}{\htop}$ and $b_k= d_k$ if $\alpha_k>0$ and $b_k=d_k+1$ if $\alpha_k=0$. Also $\alpha_1=1$, $b_1=0$ and $\alpha_{N_1}=b_{N_1}=0$.
\end{thm}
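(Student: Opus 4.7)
The plan is to run the renormalization identity of Lemma \ref{lem:adjust} on the lift $\bg := \bpi^* g = g\circ\bpi\in\cC^r(\Omega,\bR)$, giving
\[
H_{x,t}(g)=\bH_{F^n(x),\tau_n(x,t)}(\cL_\bF^n\bg),\qquad n\in\bN,
\]
and then read the growth rate off the spectral decomposition of $\cL_\bF$ supplied by Proposition \ref{th:base}. First I would choose $n=n_t(x)$ to be the largest integer for which $\tau_n(x,t)\leq 1$. Since $\log\nu_n=\sum_{j=0}^{n-1}\log\nu_1\circ F^j$ is a Birkhoff sum over the Anosov map $F$ and the integral of $-\log\nu_1$ against the measure of maximal entropy equals $\htop$ (which is forced by the identification of the leading eigenvalue of $\cL_\bF$ in Proposition \ref{th:base} as $e^{\htop}$), a standard Livsic-type estimate for Hölder cocycles yields $n_t(x)=\htop^{-1}\log t+O(1)$, uniformly in $x$.

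The second ingredient is that the functional $\bg\mapsto\bH_{y,\tau}(\bg)$ extends continuously from $\cC^r(\Omega,\bR)$ to $\cB^{p,q}$, with operator norm bounded uniformly in $y\in\bT^2$ and $\tau\in[0,1]$. This is the geometric heart of the matter: $\bH_{y,\tau}$ is integration along a short piece of the $\bphi$-trajectory through $(y,\hV(y))$, and such a trajectory projects to an admissible stable curve for $F$ carrying a smooth density, which is exactly the kind of test object the anisotropic norms of Appendix \ref{sec:norms} pair with by construction. Combined with the renormalization identity and the continuous inclusion $\cC^{p+q}\hookrightarrow\cB^{p,q}$ applied to $\bpi^* g$ (valid when $p+q\leq r$), we obtain
\[
|H_{x,t}(g)|\;\leq\;\Const\,\|\cL_\bF^n\bg\|_{\cB^{p,q}},\qquad \|\bpi^*g\|_{\cB^{p,q}}\leq \Const\,\|g\|_{\cC^r}.
\]

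I then fix $p\in\bN^*$ and $q>0$ with $p+q\leq r$ and $e^{\htop}\lambda^{-\min(p,q)}<1$, so that Proposition \ref{th:base} isolates finitely many eigenvalues $e^{\htop}=\rho_0,\rho_1,\dots,\rho_K$ of $\cL_\bF$ with $|\rho_i|\geq 1$ and provides the Jordan expansion
\[
\cL_\bF^n=\sum_{i\leq K}\rho_i^n\sum_{\ell\geq 0}\binom{n}{\ell}\Pi_i N_i^\ell \;+\; R_n,\qquad \|R_n\|_{\cB^{p,q}\to\cB^{p,q}}\leq \Const\,\sigma^n,\ \sigma<1.
\]
Via the identity $\boO_{i,j}(\bpi^* g)=(\bpi_*\boO_{i,j})(g)=\tilde O_{i,j}(g)$ and the relabeling $O_i=\tilde O_{k,l}$ introduced just before the statement, the hypothesis $g\in\bV_k$ kills exactly the projections $\Pi_i\bg$ corresponding to eigenvalues strictly larger in modulus than $|\rho_{k'}|$, where $k\in(D_{k'-1},D_{k'}]$. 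The surviving contribution is bounded by $\Const\,|\rho_{k'}|^n\,n^{b_k}\|g\|_{\cC^r}$, with $b_k$ the relevant Jordan depth. Substituting $n=\htop^{-1}\log t+O(1)$ converts $|\rho_{k'}|^n$ into $t^{\alpha_k}$ and $n^{b_k}$ into $(\log t)^{b_k}$, which is precisely the case $\alpha_k>0$. In the boundary case $\alpha_k=0$ (eigenvalue of modulus one), one re-runs the same argument after partitioning $[0,t]$ into $\asymp\log t$ sub-intervals on each of which $\tau_n\asymp 1$ for a common $n$; each piece contributes an $O((\log t)^{b_k})$ bound, and summing produces the extra logarithmic factor.

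The main obstacle is the uniform continuity of $\bH_{y,\tau}$ as a functional on $\cB^{p,q}$. This is not formal: it requires verifying that pieces of $\phi$-orbits are admissible stable leaves in the exact technical sense of Appendix \ref{sec:norms}, which ultimately hinges on the renormalization condition \eqref{eq:renorm-hp} and on the $\cC^{1+\alpha}$ regularity of $V$ and of the stable distribution of $F$. A secondary, more routine difficulty is the uniform-in-$x$ Birkhoff control $n_t(x)=\htop^{-1}\log t+O(1)$, for which one needs the Hölder cocycle $\log\nu_1^{-1}-\htop$ to have uniformly bounded Birkhoff sums along every $F$-orbit.
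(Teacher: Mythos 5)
Your proposal runs the renormalization identity $H_{x,t}(g)=\bH_{F^n(x),\tau_n(x,t)}(\cL_\bF^n\bg)$ down to time one and then tries to read off the growth from the spectral decomposition of $\cL_\bF$; this is indeed the right skeleton, but it rests on a claim that the paper explicitly flags as \emph{false}: that $\bg\mapsto\bH_{y,\tau}(\bg)$ extends to a bounded functional on $\cB^{p,q}$, uniformly for $\tau\in[0,1]$. It does not. The anisotropic norm in \eqref{eq:def-norms} pairs $\bg$ with test functions $\varphi\in\cC_0^{q+|\alpha|}(W,\bC)$, i.e.\ test functions that vanish on the boundary of the admissible leaf $W$ --- and the footnote to that definition stresses that this vanishing is essential. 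But $\bH_{y,\tau}(\bg)=\int_0^\tau \bg\circ\bphi_s(y,\hV(y))\,ds$ corresponds to pairing against the characteristic function of $[0,\tau]$, which jumps at both endpoints. You correctly sense that ``the uniform continuity of $\bH_{y,\tau}$'' is where the difficulty lies, but you misdiagnose its nature: it is not a question of whether $\phi$-orbit segments are admissible leaves (they are), it is that the constant density on such a segment is not an allowed test function. The paper's Section~3 opens by stating exactly this obstacle, and the footnote there explains that the problem ``comes from the boundary in the domain of the integral.''

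The paper's fix is the technical core that your proposal is missing. One first replaces $\bH_{x,t}$ by the mollified functionals $\bH_{x,\vf}$ of \eqref{eq:molli}, with $\vf\in\cC_0^{p+q}(\bR,\bR)$; Lemma~\ref{lem:boundedfunctional} shows these \emph{are} bounded on $\cB^{p,q}$. One then needs to express $\bH_{x,t}$ in terms of such mollified functionals, which is the content of Lemma~\ref{lem:suffice}: a telescoping decomposition that splits the sharp cutoff $\Id_{[0,\tau_{n_t}]}$ into a smooth bump $\psi$ in the interior plus two boundary pieces $\psi^\pm$, and then, using the cocycle identity \eqref{eq:change-time}, re-expands the boundary pieces at shallower iterate depths $n_{k+1}^\pm<n_k^\pm$, peeling off a new smooth bump at each step. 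The proof that this can be done with uniformly $\cC^r$-bounded test functions is Sub-Lemma~\ref{sublem:sigh}, which controls $\|\tilde\nu_{z,m}\|_{\cC^q}$ uniformly. This decomposition is what produces the sum $\sum_{n=1}^{n_t}$ in \eqref{eq:parte-uno}, which you do not have in your argument. Secondarily, your justification of $n_t=\htop^{-1}\log t+O(1)$ by a Livsic/Birkhoff-sum heuristic is not what the paper does (the paper invokes \cite[Lemma~C.3]{GLP13}), and your stated reasoning --- that $\int(-\log\nu_1)\,d\mu_{\mathrm{me}}=\htop$ forces the Birkhoff sums of $\log\nu_1$ to have bounded fluctuations --- does not follow; equality of the integral does not give pointwise $O(1)$ control of the cocycle. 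So there are two gaps: the essential one (no decomposition into functionals that are actually bounded on $\cB^{p,q}$) and a secondary one (the $n_t$ bound is asserted, not proved, and your sketch of it is not sound).
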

The above Theorem will be proven in Section \ref{sec:growth}. 
\begin{rem} Note that in Theorem \ref{thm:main} it could happen $N_1=1$, that is: either the integral grows like $t$ or it is bounded. This is indeed the situation, for example, in the linear case (see Section \ref{rem:Fouriercomputation}) and hence for small smooth perturbations of the linear case as well (see Section \ref{sec:non-lin-ex}). In such an event the result might seem less interesting, nonetheless it provides a relevant information.
\end{rem}
\begin{rem} A natural question is how to obtain a more explicit identification of the above mentioned distributions. In particular, the analogy with the situations studied by Flaminio-Forni would suggest $(\phi_t)_*O_j=O_j$, that is the distributions are invariant for the flow. However, note that in the present context, for $j>1$, we know only that $O_j\in(\cC^s)'$, for some $s\geq 2$, while in general $\phi_t\not\in\cC^2$, so $(\phi_t)_*O_j$ is, in principle, not even defined. Nevertheless, we expect some $O_j$ to be invariant distribution for the flow, but the proof is not so obvious. We therefore limit ourselves, in the general case, to discussing $O_1$, that is known to be a measure (see however Lemma \ref{lem:fuck} for a more in depth discussion of a specific class of examples).
\end{rem}
\begin{lem}\label{lem:invar-mea} The distribution $O_1$ is proportional to the unique invariant measure $\mu$ of $\phi_t$.
\end{lem}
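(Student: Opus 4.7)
The plan is to combine Theorem \ref{thm:main} with the unique ergodicity of $\phi_t$ to show that $\ker O_1 \subseteq \ker \mu$ inside $\cC^r(\bT^2,\bR)$, and then to invoke the elementary fact that two nonzero continuous linear functionals sharing a common codimension-one kernel must be proportional.

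First I would verify that $O_1$ is a nonzero functional. Theorem \ref{thm:main} asserts $\alpha_1=1$, which corresponds to the simple maximal eigenvalue $\rho_1 = e^{\htop}$ of $\cL_\bF$ guaranteed by Proposition \ref{th:base}. Applying the theorem to the constant function $g \equiv 1$, for which $H_{x,t}(1) = t$ grows linearly, forces $1 \in \bV_1$, hence $O_1(1) \neq 0$. Since $\rho_1$ is real and simple, one may choose the eigenvector $\boO_{1,1}$ real, so $O_1$ takes real values on real functions.

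Next I would show that $O_1(g) = 0$ implies $\mu(g) = 0$ for every real $g \in \cC^r$. If $g$ belongs to $\bV_k$ for some $k \geq 2$, then $\alpha_k < \alpha_1 = 1$ by simplicity of $\rho_1$, and Theorem \ref{thm:main} gives $|H_{x,t}(g)| \leq C t^{\alpha_k}(\log t)^{b_k}\|g\|_{\cC^r} = \smallo(t)$. If instead $O_i(g)=0$ for every $i$, then $\bpi^* g$ lies in the kernel of every peripheral eigendistribution $\boO_{i,j}$ of $\cL_\bF'$ (since $O_i(g) = \boO_{i,j}(\bpi^* g)$ up to the relabeling, and those $\boO_{i,j}$ with $\bpi_*\boO_{i,j}=0$ automatically pair to zero with $\bpi^* g$); hence by Proposition \ref{th:base}, $\|\cL_\bF^n(\bpi^* g)\|_{\cB^{p,q}} = O\bigl((e^{\htop}\lambda^{-\min(p,q)})^n\bigr)$. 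Applying Lemma \ref{lem:adjust} with $n = n_t$ chosen so that $\tau_n(x,t)$ is of order one (so $n \sim \log t/\htop$) yields $|H_{x,t}(g)| = \smallo(t)$, provided $r$ and hence $\min(p,q)$ are large enough. On the other hand, unique ergodicity (Lemma \ref{lem:classify} combined with \cite{Furstenberg61}) gives $t^{-1}H_{x,t}(g) \to \mu(g)$ uniformly in $x$, forcing $\mu(g)=0$.

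Finally, $\ker O_1 \subseteq \ker \mu$, together with the fact that both are closed hyperplanes of codimension one in $\cC^r(\bT^2,\bR)$, yields $\ker O_1 = \ker \mu$, whence $O_1 = c\,\mu$ for some $c \in \bR$; evaluating at the constant $1$ gives $c = O_1(1) \neq 0$.

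\textbf{Main obstacle.} The most delicate step is the case where all obstructions vanish on $g$: one must convert the essential-spectral-radius bound for $\cL_\bF^n$ on the complement of the peripheral eigenspaces into sub-linear growth of $H_{x,t}(g)$, which requires $\min(p,q)$ large enough that the factor $e^{\htop}\lambda^{-\min(p,q)}$ beats the exponential blow-up of $e^{n\htop}$ at the renormalization time $n \sim \log t/\htop$. This is precisely the quantitative content distilled by Theorem \ref{thm:main} under the assumption that $r$ is large, so it is more a bookkeeping check than a new idea.
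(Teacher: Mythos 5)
Your proof is correct and follows essentially the same route as the paper: use Theorem \ref{thm:main} plus unique ergodicity to show $\ker O_1\subseteq\ker\mu$ in $\cC^r$, then conclude by the codimension-one hyperplane argument. The only differences are cosmetic: the paper first observes that $\boO_1$ is a positive distribution (hence a measure) via $\lim_n\rho^{-n}\cL_\bF^n\bg=\bh_1\boO_1(\bg)\ge 0$ for $\bg\ge 0$, a step your kernel argument makes unnecessary for the stated proportionality; and you handle the case where $g$ annihilates all the $O_i$ somewhat more explicitly than the paper's one-line appeal to Theorem \ref{thm:main}.
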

\begin{proof}
By Proposition \ref{th:base} it follows that, for all $\bg\geq0$, 
\[
0\leq\lim_{n\to \infty}\rho^{-n}\cL_\bF^n(\bg)=\bh_1\boO_1(\bg),
\]
where $\bh_1$ and $\boO_1$ are the right and left eigendistributions of $\cL_\bF$ associated to the eigenvalue $\rho$, respectively.
Accordingly, $\boO_1$ is a positive distribution, and hence a measure, thus also $O_1$ is a measure.
By the ergodic theorem $H_{x,t}(g)$ grows proportional to $t$ unless $g\in\bV_0=\{g\;:\;\mu(g)=0\}$. By Theorem \ref{thm:main} it follows that $\operatorname{Ker}(O_1)\subset\bV_0$. On the other hand the kernel of $O_1$ must be a codimension one closed subspace, hence  $\operatorname{Ker}(O_1)=\bV_0$. It follows that the two measures must be proportional.\footnote{ Remark that this implies that $O_1$ is invariant for the flow $\phi_t$. In fact, by using  judiciously \eqref{eq:commute2} one could have proven directly that $\boO_1$ is invariant for $\bphi_t$. It is possible that such a proof would work also for eigendistributions with eigenvalues with modulus sufficiently close to one. Yet, for smaller eigenvalues the aforementioned regularity problems seem to kick in.} 
\end{proof}
The next step is to study, in the case $O_i(g)=0$ for all $i\in\{1,\dots,N_1\}$, the regularity of the coboundary. As already mentioned (see Remark \ref{rem:low-reg}) it is natural to consider only $r_i\leq 1+\alpha$. To study exactly the H\"older regularity would entail either to use a more complex Banach space or an interpolating argument.  In the spirit of giving ideas rather than a complete theory, we content ourselves with considering Lipschitz regularity. To do so we have only to consider the derivative of $\oH_{T}$ with respect to $x$. To study the growth of such a derivative, it is necessary to introduce new adapted transfer operators $\cL_{\hat A,\bF}$ and  $\widehat{\cL}_\bF$ the second of which is now defined on one forms (see equation \eqref{eq:new-new} for the precise definitions) and acts on different Banach  spaces $ \widehat \cB^{p,q}$ (see Appendix \ref{sec:norms-bis}).
The Banach spaces $\widehat \cB^{p,q}$ are a bit more complex than the $\cB^{p,q}$ used in Theorem \ref{thm:main} insofar they are really spaces of currents rather then distributions (one has to think of $dg$, rather than $g$, as an element of the Banach space).  Apart from this, the proof of our next result, to be found in Section \ref{sec:cob}, follows the same logic of the first proof.

\begin{thm}\label{thm:maintwo}
Provided $r$ is large enough,\footnote{ Here $r$ needs to be much larger than in the previous Theorem. A precise estimate is implicit in the proof, but the reader may be better off assuming $F\in\cC^\infty$ and not worrying about this issue.} there exist distributions (that we often call {\em obstructions}) $\{O_i\}_{i=N_1+1}^{N_2}\subset \cC^r(\bT^2,\bR)'$ such that if $O_i(g)=0$ for all $i\in\{1,\dots,N_2\}$, then $g$ is a Lipschitz coboundary. More precisely, for appropriate $p,q$, $p+q\leq r-2$, there exists a potential $\hAcc$, an operator $\cL_{\bF,\hAcc}$ acting on $\cB^{p,q}$ and a Banach spaces $\widehat \cB^{p,q}$ with a transfer operator $\widehat \cL_{\bF}$ (depending on action of the map $\bF$ on one forms) acting on it,\footnote{ See \eqref{eq:new-new} for the exact definition of such operators.} such that the distributions $\{O_i\}_{i=N_1+1}^{N_2}$ are described in terms of a base of the eigenspaces associated to the discrete eigenvalues of the operators $\cL_{\bF}, \cL_{\bF,\hAcc}$ and $\widehat \cL_\bF$. 
\end{thm}
\begin{rem} Note that, in principle, $g$ could be a Lipschitz coboundary even if it is not in the kernel of the distributions $\{O_i\}_{i=1}^{N_2}$. Indeed, the Theorem provides only sufficient conditions. However, we believe the conditions to be generically also necessary, but to prove this quite some more work seems necessary. We limit ourselves to discussing such issue in a class of examples (see Lemma \ref{thm:lip}).
\end{rem}
The next sections of the paper are devoted to the proof of the above claims.

Last we would like to conclude this section with the following considerations.

\begin{conj}
 The natural analogues of Theorems \ref{thm:main} and \ref{thm:maintwo} hold in the case of the horocycle flow on a surface of variable strictly negative curvature, where the renormalizing dynamics is the geodesic flow, with the only modification of having an infinite countable family of obstructions. 
\end{conj}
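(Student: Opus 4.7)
The plan is to replace the Anosov map $F$ of the torus by the geodesic flow $g_t$ acting on $X=T^1 M$ of the variable-negative-curvature surface $M$, and the parabolic flow $\phi_t$ by the strong stable horocycle flow $h_s$, generated by a vector field $V(x)\in E^{ss}(x)$. In variable curvature the clean conjugation $g_t h_s g_{-t}=h_{se^{-t}}$ breaks, and is replaced by the pointwise renormalisation identity
\[
g_t(h_s(x))=h_{\tau(x,t,s)}(g_t(x)),\qquad \tau(x,t,s)=\int_0^s \nu_t(h_\sigma(x))\,d\sigma,
\]
where $\nu_t(x)$ is the contraction factor of $D_xg_t$ along $E^{ss}(x)$. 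This is precisely the flow analogue of \eqref{eq:commute}, so the functional $H_{x,t}(g):=\int_0^t g\circ h_s(x)\,ds$ satisfies the analogue of \eqref{eq:basic-step} with $\cL_F^n$ replaced by the geodesic transfer semigroup $\cL_{g_t}$ acting on functions on $X$ with weight $\|V(x)\|/(\nu_t(x)\|V(g_tx)\|)$.

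The map-based Proposition~\ref{th:base} must be upgraded to the corresponding spectral theory for hyperbolic \emph{flows}, developed in \cite{GLP13,Faure-Tsujii13,Dyatlov-Zworski}. Applied to the contact Anosov geodesic flow, it provides scales of anisotropic Banach spaces $\cB^{p,q}$ on which the generator of $\cL_{g_t}$ has resolvent meromorphic in half-planes $\{\Re z>-c(p,q)\}$, with $c(p,q)\to\infty$ as $\min\{p,q\}\to\infty$, and whose poles form a discrete set of Pollicott--Ruelle resonances $\{\mu_i\}\subset\bC$ of finite multiplicity. As in Section~\ref{sec:proof}, the horocyclic weight contains the $\cC^{1+\alpha}$ vector field $V$, and one is forced to lift to a $g_t$-invariant subset $\Omega\subset T(T^1M)$ of the Grassmannian on which both the extended flow and the weight are smooth, and then push back via the section $x\mapsto(x,\hV(x))$. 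Because $p,q$ are now unrestricted (the lifted weight being $\cC^\infty$), the obstructions $O_i:=\bpi_*\boO_i\in\cC^\infty(X,\bC)'$ form a \emph{countably infinite} family: this is the only departure from Theorems~\ref{thm:main} and~\ref{thm:maintwo}.

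The growth and regularity conclusions then follow by adapting Sections~\ref{sec:growth} and~\ref{sec:cob} verbatim. Choosing $T_n(x)$ so that $\tau(x,T_n,s)$ becomes of order one, inserting the meromorphic expansion of $\cL_{g_{T_n}}$ on $\cB^{p,q}$ up to an exponentially small remainder into the flow analogue of Lemma~\ref{lem:adjust}, and projecting out the first $k-1$ obstructions, one reads off
\[
|H_{x,t}(g)|\le C\,t^{\alpha_k}(\log t)^{b_k}\|g\|_{\cC^r},\qquad \alpha_k=\frac{\Re\mu_k}{\htop},
\]
with the logarithmic exponents $b_k$ coming from Jordan blocks of the generator. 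For the cohomological equation one builds, as in Section~\ref{sec:cob}, a second transfer semigroup $\widehat\cL_{g_t}$ on one-forms of the Grassmannian lift and extracts a further countably infinite family of Lipschitz (and, by iterating the construction one order higher, Sobolev) obstructions from its resonances with real part close to zero.

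The principal obstacle is twofold. First, one must construct anisotropic spaces for the flow $g_t$ that simultaneously accommodate the Grassmannian lift of \cite{GouezelLiverani08}, remain compatible with the pullback by $\bpi$ and the restriction to the section $\{(x,\hV(x))\}$ so that the analogue of Lemma~\ref{lem:adjust} is available, and exploit the contact structure (in the manner of \cite{Faure-Tsujii13,Dyatlov-Zworski}) in order to thin the resonance spectrum in each vertical strip; existing constructions require nontrivial adaptation because the strong stable foliation of $g_t$ is only $\cC^{1+\alpha}$ in variable curvature. Second, one must rule out a wholesale collapse under $\bpi_*$: as already observed after Theorem~\ref{thm:main} and in Appendix~\ref{rem:Fouriercomputation}, individual $\boO_{i,j}$ can push forward to zero, and one needs a variable-curvature genericity argument to ensure that countably many survive. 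The cleanest way would be to identify the surviving $O_i$ with the Flaminio--Forni invariant distributions of $h_s$, by extending to low-regularity eigendistributions the $\bphi_t$-invariance argument sketched here for $\cO_1$; this last identification appears to be the sharp technical bottleneck of the conjecture.
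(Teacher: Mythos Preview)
The statement you are attempting to prove is labelled \emph{Conjecture} in the paper, and the paper does not supply a proof; it only follows the statement with a remark explaining why one should expect countably many obstructions (the generator of the geodesic transfer semigroup has discrete spectrum in half-planes, typically with infinitely many eigenvalues). So there is no ``paper's own proof'' against which to compare.

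What you have written is not a proof either, and you are honest about this: the final paragraph lists the two substantive obstacles that prevent your outline from being an argument. Your strategy is exactly the one the authors have in mind --- replace $F^n$ by $g_t$, replace Proposition~\ref{th:base} by the resonance theory of \cite{GLP13,Faure-Tsujii13,Dyatlov-Zworski}, lift to the Grassmannian to cure the low regularity of the weight, and rerun Sections~\ref{sec:growth}--\ref{sec:cob} --- and your identification of the bottlenecks is accurate. In particular, (i) the existing anisotropic constructions for contact Anosov flows are not written so as to interact cleanly with the Grassmannian lift and the section $x\mapsto(x,\hV(x))$, and this compatibility is genuinely what one would have to engineer; (ii) the possible collapse of eigendistributions under $\bpi_*$, which already occurs in the linear torus example of Appendix~\ref{rem:Fouriercomputation}, is a real issue and not a technicality. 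Your suggestion to attack (ii) by proving $\bphi_t$-invariance of the $\boO_{i,j}$ is natural but, as the paper itself notes just after Theorem~\ref{thm:main}, the low regularity of the horocycle flow makes this delicate already in the toy model.

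In short: your proposal is a reasonable and well-informed research programme that matches the authors' intended approach, but it is a sketch of open problems rather than a proof, and the paper does not claim otherwise.
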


\begin{rem}\label{rem:horo} The difference between finitely many and countably many obstructions comes from the different spectrum of the transfer operators for maps and flows. In the former, the discrete spectrum is always finite. In the latter, one has a one parameter families of operators and it is then more natural to look at  the spectrum of the generator. It turns out that such a spectrum is discrete on the right of a vertical line whose location depends on the flow regularity. Yet, it can have countably many eigenvalues (as the laplacian on hyperbolic surfaces), hence the countably many obstructions (see \cite{ButterleyLiverani07, ButterleyLiverani13, Faure-Tsujii13} for more details).
\end{rem}

\section{Growth of the ergodic average}\label{sec:growth}
As already explained, there is one further, and luckily last, conceptual obstacle preventing the naive implementation our strategy: the functionals $\bH_{x,t}$ are, in general, not continuous (let alone uniformly continuous with respect to $(x,t)$) on the spaces $\cB^{p,q}$ that are detailed in Appendix \ref{sec:norms}.  That is, they do not belong to $(\cB^{p,q})'$, for $p\neq 0$.\footnote{ The problem comes from the boundary in the domain of the integral defining them. There, in some sense, the integrand jumps to zero and cannot be considered smooth in any effective manner.}
In fact, it is possible to introduce different Banach spaces on which the transfer operator is quasi-compact and the functionals $\bH_{x,t}$ are continuous (this are spaces developed to handle piecewise smooth dynamics such as \cite{DemersLiverani, BaladiGouezel10, DemersZhang, BaladiLiverani11}) but the essential spectral radius of our transfer operators on such spaces would always be rather large. Hence we would be able to obtain in this way, at best, only the very firsts among the relevant distributions we are seeking, whereby nullifying the appeal of our approach.

Before providing the proof of Theorem \ref{thm:main} we must thus circumvent such a problem. To this end we introduce, for each $x\in\bT^2$ and $\vf\in L^\infty(\bRp,\bR)$, the new ``mollified" functional
\begin{equation}\label{eq:molli}
\bH_{x,\vf}(\bg)=\int_{\bR}\vf(t)\cdot \bg\circ\bphi_t(x, \hV(x))\, dt.
\end{equation}
It is proven in Appendix \ref{sec:norms} that $\bH_{x,\vf}\in (\cB^{p,q})'$ provided $\vf\in \cC_0^{p+q}(\bR,\bR)$.

\subsection{Proof of our first main result}\ \newline\label{subsec:proof-one}
Our key claim is that the functionals \eqref{eq:molli} suffice for our purposes. To be more precise let us fix $t>0$ and define the sets
$\cD^s_{r,C}=\{\vf\in \cC^r([0,t],\bRpe)\;:\;\|\vf\|_{\cC^r}\leq C\}$ and $\cD_{r,C}=\{\vf\in \cC_0^r([0,t],\bRp)\;:\;\|\vf\|_{\cC^r}\leq C\}$, note that such sets are locally compact in $\cC^{r-1}([0,t],\bR)$ and $\cC_0^{r-1}([0,t],\bR)$, respectively.\footnote{ Up to now the exact definition of the $\cC^r$ norms was irrelevant, now instead it does matter. We make the choice $\|\vf\|_{\cC^r}=\sum_{k=0}^r 2^{r-k}\|\vf^{(k)}\|_{L^\infty}$. It is well known that  with such a norm $\cC^r$ is a Banach algebra. Also, as usual, for a $\cC^r$ function on a closed set, we mean that there exists an extension on some larger open set.}
\begin{lem}\label{lem:suffice}
There exists $C_*>0$ such that, for each $n\in\bN$, $t\in\bRp$, $x\in\bT^2$ and $\bg\in\cC^{r-1}(\Omega,\bR)$, there exists $K\in \bN$, $\{n_i^\pm\}_{i=1}^K\subset \bN$, $n_K^\pm=0$, $\Const>n_i^\pm- n_{i+1}^\pm\geq 0$, $n_i^-+n_i^+>n_{i+1}^-+n_{i+1}^+$, and $\{\vf_i^\pm\}_{i=1}^K\subset \cD_{r,C_*}$, $\{\vf^\pm\}\subset \cD^s_{r,C_*}$ such that
\[
\bH_{x,t}(\bg)=\sum_{\sigma\in\{+,-\}} \left( \sum_{i=1}^K\bH_{F^{n_i^\sigma}(x), \vf_i^\sigma}(\cL_{\bF}^{n_i^\sigma}\bg)+\bH_{x,\vf^\sigma}(\bg) \right).
\]
Moreover,  $\max\{|\supp \vf^\pm|,|\supp\vf^\pm_i|\}\leq 1$. \newline
Finally $\vf_1^-=\vf_1^+$ and $n_1^\pm= n_t$ where $n_t=\inf\{n\in\bN\;:\; \tau_n(x,t)< 1\}$ satisfies the bounds
\begin{equation}\label{eq:nt-bound}
\frac{\ln t}{\htop}-\Const\leq n_t\leq \frac{\ln t}{\htop}+\Const.
\end{equation}
\end{lem}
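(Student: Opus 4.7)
The plan is an iterative construction. After an initial renormalization bringing the integration interval up to unit length, I repeatedly apply a fixed $\cC^r$ partition of unity to split the current interval as $\mathbf{1}_{[0,\tau]} = \vf_{\mathrm{int}} + b^- + b^+$ (a smooth, compactly supported interior bump plus two one-sided boundary bumps of support $\delta_0$), and then use the renormalization identity \emph{backwards} to move each boundary bump to a strictly lower level of $F$, where its support is stretched back to order one so that the partition of unity can be applied again. The interior bumps become the $\vf_i^\sigma$'s, and the process terminates once the levels reach zero, at which point the remaining boundary bumps define the $\vf^\pm$'s.

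Concretely, I would first apply Lemma \ref{lem:adjust} with $n=n_t$, yielding $\bH_{x,t}(\bg)=\bH_{F^{n_t}(x),\tau_{n_t}(x,t)}(\cL_\bF^{n_t}\bg)$ with $\tau_{n_t}(x,t)\in[c,1)$ for a uniform $c>0$; the bound \eqref{eq:nt-bound} then follows from the bounded distortion of the stable Jacobian $\nu_n$ (a consequence of $F\in\cC^{1+\alpha}$) combined with the spectral identification $\rho=e^{\htop}$ from Proposition \ref{th:base}. I then fix once and for all a monotone $\cC^r$ cutoff $\psi:\bR\to[0,1]$ with $\psi=0$ on $(-\infty,0]$ and $\psi=1$ on $[1,\infty)$, and a scale $\delta_0\in(0,c/4)$, setting $\vf_{\mathrm{int}}(s):=\psi(s/\delta_0)\,\psi((\tau_{n_t}(x,t)-s)/\delta_0)$. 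The smooth interior contribution at level $n_t$, split symmetrically as $\vf_1^-=\vf_1^+:=\vf_{\mathrm{int}}/2$ with $n_1^\pm=n_t$, gives the $i=1$ term.

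To iterate, the same change of variables used in the proof of Lemma \ref{lem:adjust} produces the more general identity
\[
\bH_{x,\vf}(\bg) = \bH_{F^n(x),\;\vf\circ\tau_n^{-1}(x,\cdot)}(\cL_\bF^n\bg)
\]
valid for arbitrary test functions $\vf$. Applying this backwards with a fixed number of steps $m_0$ (chosen large enough, using uniform hyperbolicity, so that $\nu_{m_0}(y)^{-1}\delta_0$ is of order one for all $y$), each boundary bump at level $n_t$ is rewritten as a bump at level $n_t-m_0$ with support of order one. I then apply the partition of unity again to this new bump, extracting $\vf_2^\sigma$ at level $n_2^\sigma=n_t-m_0$ and leaving two new shorter boundary bumps to process. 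Iterating, after $K\sim n_t/m_0$ steps one lands at level $0$ (shrinking the last $m_0$ if needed to land exactly). The residual boundary bumps at level $0$ have support $\leq 1$ and are $\cC^r$ on their support, though not smoothly extendable across the sharp endpoint $s=0$ or $s=t$, so they define $\vf^\pm\in\cD^s_{r,C_*}$.

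The main obstacle is establishing the uniform $\cC^r$-bound $C_*$ on all $\vf_i^\sigma$ across iterations; a priori one might fear that repeated composition degrades the bound. The key point is that $m_0$ is held fixed throughout: since $F\in\cC^r$, the function $\tau_{m_0}(y,\cdot)$ and its inverse have $\cC^r$-norms bounded uniformly in $y$, and Fa\`a di Bruno applied to the composition $b\circ\tau_{m_0}(F^{n-m_0}(x),\cdot)$ gives a bound on the resulting $\cC^r$-norm depending only on $\psi$, $\delta_0$, $m_0$, and $F$, which yields $C_*$. The support constraint $\max\{|\supp\vf^\pm|,|\supp\vf_i^\pm|\}\leq 1$ and the monotonicity $n_i^-+n_i^+>n_{i+1}^-+n_{i+1}^+$ are built into the construction by design, since each iteration works on an interval of length at most one and strictly decreases at least one of $n_i^\pm$.
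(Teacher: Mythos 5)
Your overall architecture — bring the integral up to level $n_t$, peel off a smooth interior bump, push the boundary pieces back down and iterate — is the paper's, and the identity $\bH_{x,\vf\circ\tau_n(x,\cdot)}(\bg)=\bH_{F^n(x),\vf}(\cL_\bF^n\bg)$ is exactly the renormalization \eqref{eq:change-time} the paper uses. The gap is in the claim that one can work with a \emph{fixed} step size $m_0$ ``chosen so that $\nu_{m_0}(y)^{-1}\delta_0$ is of order one for all $y$.'' For a nonlinear Anosov $F$ the stable Jacobian $\nu_m$ has unbounded distortion: since $\log\nu_m(y)=\sum_{j<m}\log\nu_1(F^jy)$ is a Birkhoff sum whose averages differ across invariant measures, the ratio $\sup_y\nu_m(y)/\inf_y\nu_m(y)$ is of order $(\Lambda/\lambda)^m$ and diverges with $m$. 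There is therefore no fixed $m_0$ making $\nu_{m_0}(y)^{-1}\delta_0$ uniformly $\Theta(1)$; depending on the base point, after $m_0$ backward steps the stretched support could come out shorter than $2\delta_0$ (so there is no interior left to peel) or much longer than $1$ (violating $|\supp\vf_i^\pm|\le 1$ and, if patched by further splitting, breaking the strict monotonicity $n_i^-+n_i^+>n_{i+1}^-+n_{i+1}^+$ which controls $K\le n_t$ in the proof of Theorem~\ref{thm:main}).

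The paper avoids this by choosing $m$ \emph{adaptively} at each iteration: $m+1=\inf\{n:\tau_n^{-1}(F^{n_k^-}(x),a_k)\ge 1\}$, which by construction places the stretched support in $[\Lambda^{-1},1)$ regardless of distortion. The uniform $\cC^r$ bound then comes not from a Fa\`a di Bruno estimate with a fixed $m_0$ but from the observation — Sub-Lemma~\ref{sublem:sigh} — that $\|\tilde\nu_{z,m}\|_{\cC^q}\le A_q\|\tilde\nu_{z,m}\|_{\cC^0}$ with $A_q$ \emph{independent of} $m$; combined with the fact that the adaptive $m$ always satisfies $m\ge\overline m$ for a fixed $\overline m$ that grows as $\delta\downarrow 0$, this makes $\|\tilde\nu_{z,m}\|_{\cC^q}\le 1/4$, so the composition is norm-contracting and the bound $C_*$ stabilizes. (Your remark that $\tau_{m_0}$ has uniformly bounded $\cC^r$ norm is not the right quantity — what matters is that the derivatives of $\tau_m$, i.e.\ $\tilde\nu_{z,m}$, are \emph{small}, not merely bounded.) Your argument as written would be correct in the linear/rigid case where $\nu_m$ has no distortion, but the whole point of the paper is to handle the nonlinear case. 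Finally, the bound \eqref{eq:nt-bound} does not follow from bounded distortion of $\nu_n$ (which fails) nor directly from $\rho=e^{\htop}$; it is a statement about return times along stable leaves and the paper outsources it to \cite[Lemma C.3]{GLP13}.
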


Before proving Lemma \ref{lem:suffice}, let us use it and prove our first main result.
\begin{proof}[{\bf Proof of Theorem \ref{thm:main}}]
By Proposition \ref{th:base} we have
\begin{equation} \label{eq:quasicompactness}
\cL_{\bF}= \sum_{j=0}^m\left(  \rho_j \Pi_j +Q_j\right) + R_{p,q}
\end{equation}
where $m$ is a finite number, $\rho_j$, $|\rho_{j+1}|\leq |\rho_j|\leq e^{\htop}$, are complex eigenvalues of $\cL_{\bF}$, $\Pi_j$ are finite rank projectors, $Q_j$ are  nilpotent operators. That is, there exists $\{d_j\}_{j=1}^m$  such that $Q_j^{d_j}=0$ and, if $d_j>1$, then $Q^{d_j-1}\neq 0$. Finally, $R_{p,q}$ is a linear operator with spectral radius at most $ e^{\betas}$ where $ e^{\betas}  =\lambda^{-\min(p,q)} e^{\htop}$. In addition,
$\Pi_j  R_{p,q} = R_{p,q} \Pi_j =Q_j  R_{p,q} = R_{p,q} Q_j =  0 $. Moreover, for each $i\neq j$, $[\Pi_i, \Pi_j] =[\Pi_i,Q_j]=[Q_i, Q_j] = 0$ and $\Pi_i^2=\Pi_i$, $\Pi_iQ_i=Q_i\Pi_i=Q_i$.  In other words the operator $\cL_\bF$ is quasi compact and it has a spectral decomposition in  Jordan Block of size $d_j$  plus a non compact part of small spectral radius. Note as well that $d_1=1$, $Q_1=0$ and $\Pi_1$ is a one dimensional projection corresponding to the eigenvalue $e^{\htop}$ which is the only eigenvalue of modulus $e^{\htop}$. Finally, set 
\[
\alpha_j=\frac{\ln|\rho_j|}{\htop}\;;\quad \widetilde N_1=\sum_{j=1}^m d_j.
\]
If $r$ is large enough, we can choose $p,q$, $m$ and $\ve$ such that $|\rho_j|\geq 1$ for all $j\leq m$, $\betas<0$  and $e^{\betas}+\ve<1$, hence $\sup_n \|R_{p,q}^n\|_{p,q}\leq\Const$. 
Then, setting $\bg=g\circ\bpi$, Lemmata \ref{lem:adjust}, \ref{lem:suffice} and \ref{lem:boundedfunctional}, together with the spectral decomposition \eqref{eq:quasicompactness}, imply
\[
\begin{split}
\left|H_{x,t}(g)-\sum_{j=0}^m \sum_{\sigma\in\{+,-\}}  \sum_{i=1}^K\bH_{F^{n_i^\sigma}(x), \vf_i^\sigma}((\rho_j\Pi_j+Q_j)^{n_i^\sigma} \bg)\right|&\leq \Const\|g\|_{L^\infty}+\Const \|R_{p,q}^n \bg\|_{p,q}\\
&\leq\Const \|\bg\|_{p,q}.
\end{split} 
\]
On the other hand, setting
\[
\ell_j(x,t,g)=\begin{cases}\rho_j^{-n_t}n_t^{-d_j+1}\sum_{\sigma\in\{+,-\}}  \sum_{i=1}^K\bH_{F^{n_i^\sigma}(x), \vf_i^\sigma}((\rho_j\Pi_j+Q_j)^{n_i^\sigma} \bg)\;&\textrm{if } |\rho_j|>1\\
n_t^{-d_j}\sum_{\sigma\in\{+,-\}}  \sum_{i=1}^K\bH_{F^{n_i^\sigma}(x), \vf_i^\sigma}((\rho_j\Pi_j+Q_j)^{n_i^\sigma} \bg)\;&\textrm{ if } |\rho_j|=1,
\end{cases}
\]
we have, in the first case,\footnote{ Note that the $n_i^\pm$ in Lemma \ref{lem:suffice} cannot be more than $n_t$, hence $K\leq n_t$.}
\[
\begin{split}
\left|\ell_j(x,t,g)\right|&\leq \Const \sum_{n=1}^{n_t}\rho_j^{-n_t}n_t^{-d_j}\|(\rho_j\Pi_j+Q_j)^n \bg\|_{p,q}\\
&\leq\Const \sum_{n=1}^{n_t}\rho_j^{-n_t}n_t^{-d_j}\rho_j^{n}n^{d_j}\|\bg\|_{p,q}\leq \Const\|\bg\|_{p,q}
\end{split}
\]
and the same estimate holds in the second case. Note that the function $\ell_j$ have a natural decomposition $\ell_j=\sum_{k=0}^{d_j-1}n_t^{k}\rho_j^{n_t}\ell_{j,k}$. Collecting the above yields
\begin{equation}\label{eq:parte-unobis}
\left|H_{x,t}(g)-\sum_{j=0}^m \rho_j^{n_t}\sum_{k=0}^{d_j-1}n_t^{k}\ell_{j,k}(x,t,g)\right|\leq \Const \|\bg\|_{p,q}.
\end{equation} 
To conclude note that $\Pi_{j}=\sum_{i=1}^{d_j}h_{j,i}\otimes \boO_{j,i}$ with $h_{j,i}\in\cB^{p,q}$ and $\boO_{j,i}\in (\cB^{p,q})'\subset (\cC^r(\Omega,\bR)'$. Finally, since $\bpi^*:\cC^r(\bT^2,\bR)\subset\cC^r(\Omega,\bR)$, we have that $\tilde O_{j,i}:=\bpi_*\boO_{i,j}\in (\cC^r(\bT^2,\bR))'$, and $\boO_{j,i}(\bg)=\tilde O_{j,i}(g)$. Note that it might happen $\bpi_*\boO_{j,i}=\bpi_*\boO_{j',i'}$ or $\bpi_*\boO_{j,i}=0$ (see Section \ref{rem:Fouriercomputation}). Let $N_1$ be the cardinality of the set $\{\bpi_*\boO_{j,i}\}$. Then, by construction, if $g\in \bV_{i}$, then $\ell_j(x,t,g)\equiv 0$ for all $j$ such that $i\leq D_{j-1}$. Hence the Theorem follows.
\end{proof} 

\subsection{Decomposition in proper functionals}\ \newline
This section is devoted to showing that the functionals $H_{x,t}$ can be written in terms of well behaved functionals plus a bounded error.
\begin{proof}[{\bf Proof of Lemma \ref{lem:suffice}}]
Fix $x\in\bT^2$ and $t\in\bRp$. By definition $\tau_{n_t}(x,t)\in (\Lambda^{-1}, 1)$ for some fixed $\Lambda>1$.

Let $\delta\in(0,\Lambda^{-1}/4)$ small and $C_*>0$ large enough to be fixed later.
We can now fix $n_1=n_t$. Note that the claimed bound on $n_t$ follows directly by \cite[Lemma C.3]{GLP13}. Next, chose $\psi\in\cD_{r,C_*/2}$ such that $\supp\psi\subset (\delta,\tau_{n_1}-\delta)$, $\psi|_{[2\delta,\tau_{n_1}-2\delta]}=1$. Set $\psi^-=(1-\psi)\Id_{[0,\tau_{n_1}/2]}$, $\psi^+=(1-\psi)\Id_{[\tau_{n_1}/2, \tau_{n_1}]}$. Then $\psi^\pm\in \cD^s_{r,C_*}$ and we can use Lemma \ref{lem:adjust} to write
\[
\begin{split}
\bH_{x,t}(\bg)&=\bH_{F^{n_1}(x),\tau_{n_1}(x,t)}(\cL_{\bF}^{n_1}\bg)\\
&=\bH_{F^{n_1}(x), \psi^-}(\cL_{\bF}^{n_1}\bg)+\bH_{F^{n_1}(x), \psi}(\cL_{\bF}^{n_1}\bg)+\bH_{F^{n_1}(x), \psi^+}(\cL_{\bF}^{n_1}\bg).
\end{split}
\]
We are happy with the middle term which, by Lemma \ref{lem:boundedfunctional}, is a continuous functional of $\cL_{\bF}^{n_1}\bg$, not so the other two terms. We have thus to take care of them.
A computation analogous to the one done in Lemma \ref{lem:adjust} yields, for each $n\in\bN$,
\begin{equation}\label{eq:change-time}
\bH_{x,\vf\circ \tau_n(x, \cdot)}(\bg)=\bH_{F^n(x),\vf}(\cL_{\bF}^n\bg).
\end{equation}
We will use the above to prove inductively the formula
\begin{equation}\label{eq:H-ind}
\begin{split}
\bH_{x,t}(\bg)=&\bH_{F^{n^-_k}(x), \psi_k^-}(\cL_{\bF}^{n^-_k}\bg)+\bH_{F^{n^+_k}(x), \psi_k^+}(\cL_{\bF}^{n^+_k}\bg)\\
&+\sum_{\sigma\in\{+,-\}}\sum_{i=1}^k\bH_{F^{n^\sigma_i}(x), \vf^\sigma_i}(\cL_{\bF}^{n^\sigma_i}\bg)
\end{split}
\end{equation}
where $\psi^\pm_1=\psi^\pm$, $\vf^{\pm}_1=\frac 12\psi$, $n_1^\pm=n_1$, $\psi^\pm_k\in\cD^s_{r,C_*}$, $\{\vf_i^\pm\}\subset \cD_{r,C_*}$, $\|\psi^\pm_k\|_{L^\infty}\leq 1$, $\|\vf^\pm_k\|_{L^\infty}\leq 1$, $(b_k^\pm,b_k^\pm\mp\delta)\subset\supp{\psi^\pm_k}\subset(b_k^\pm,b_k^\pm\mp2\delta)$,  $\supp{\vf_k^\pm}\subset (b_k^\pm,b_k^\pm\mp 1)$, $b_k^-=0$ and $b_k^+\in[0, \Lambda^{n^+_k}]$, $b_1^+=t$.

Let us consider the first term on the right hand side of the first line of \eqref{eq:H-ind} (the second one can be treated in total analogy). Let $\supp(\psi_k^-)=[0,a_k]$ and define $m+1=\inf\{n\in\bN\;:\; \tau_n^{-1}(F^{n_k^-}(x),a_k)\geq 1\}$. Note that, by construction, there exists a fixed $\overline m\in\bN$ such that $m\geq \overline m$, also $\overline m$ can be made large by choosing $\delta$ small. Hence $\widehat{\psi}^-_{k}(s)=\psi_k^-\circ \tau_{m}(F^{n_k^-}(x),s)$ is supported in the interval $[0,1)$ and the support contains $[0,\Lambda^{-1}]$. 

Next, we need an estimate on  the norm of $\widehat{\psi}^-_{k}$. We state it in a sub-lemma so the reader can easily choose to skip the, direct but rather tedious, proof.
\begin{sublem}\label{sublem:sigh}
Provided we choose $\delta$ small and $C_*$ large enough, we have 
\[
\widehat{\psi}^-_{k}\in\cD_{r,C_*/2},
\] 
where $n_{k+1}^-=n_k^--m$.
\end{sublem}
\begin{proof}
First of all $\|\psi^-_k\|_{L^\infty}\leq 1$, and\footnote{ Here we use the formula $\|f\circ g\|_{\cC^r}\leq\sum_{k=0}^r 2^{r-k}\|f\|_{\cC^k}\|Dg\|_{\cC^{r-1}}\|Dg\|_{\cC^{r-2}}\cdots\|Dg\|_{\cC^{r-k}}$, that can be verified by induction.}
\begin{equation}\label{eq:hoihoi}
\begin{split}
\|\widehat{\psi}^-_{k}\|_{\cC^r}&\leq \sum_{j=0}^r 2^{r-j}\|\psi^-_k\|_{\cC^j}\|\tilde\nu_{z_k,m}\|_{\cC^{r-1}}\|\tilde\nu_{z_k,m}\|_{\cC^{r-2}}\cdots\|\tilde\nu_{z_k,m}\|_{\cC^{r-j}}\\
&\leq 2^r+C_*\sum_{j=1}^r \|\tilde\nu_{z_k,m}\|_{\cC^{r-1}}\|\tilde\nu_{z_k,m}\|_{\cC^{r-2}}\cdots\|\tilde\nu_{z_k,m}\|_{\cC^{r-j}}
\end{split}
\end{equation}
where $z_k=F^{n_k^-}(x)$ and, for each $j\in\bN$ and $z\in\bT^2$, $\tilde \nu_{z,m}(s)=\nu_m(\phi_s(z))$, where $\nu_m$ is defined in \eqref{eq:defvn}. Note that, although $\nu_m$ is, in general, only $\cC^{1+\alpha}$, by \eqref{eq:smooth-hp} it follows that the map $s \to \tilde \nu_{z,m}(s)\in\cC^{r-1}$ and hence, for all $z\in\bT^2$, $\langle V,\nabla \tilde \nu_{z}\rangle\circ \phi_{(\cdot)}\in\cC^{r-2}$.
We can thus continue and compute
\[
\begin{split}
\frac{d}{ds}\tilde \nu_{z_k,m}(s)&=\frac{d}{ds}\prod_{j=0}^{m-1}\nu_1(F^j(\phi_s( z_k)))\\
&=\tilde \nu_{z_k,m}(s)\sum_{l=0}^{m-1} \frac{\langle \nabla \nu_1(F^l\circ \phi_s(z_k)), V(F^l\circ\phi_s(z_k))\rangle}{\nu_1(F^l\circ\phi_s(z_k))} \tilde \nu_{z_k,l}(s)\\
&=\tilde \nu_{z_k,m}(s)\sum_{l=0}^{m-1} \left[\frac{\langle V, \nabla \nu_1\rangle}{\nu_1}\right]\circ\phi_{\tau_l(z_k,s)}(F^l(z_k)) \tilde \nu_{z_k,l}(s).
\end{split}
\]
The above, by induction, implies that there exist increasing constants $A_q\geq 1$ such that $\|\tilde\nu_{z_k,m}\|_{\cC^{q}}\leq A_q\|\tilde\nu_{z_k,m}\|_{\cC^0}$. Indeed,  
$[\frac{\langle V, \nabla \nu_1\rangle}{\nu_1}]\circ \phi_{\cdot}\in\cC^{r-2}$, and
\[
\begin{split}
\left\|\left[\frac{\langle V, \nabla \nu_1\rangle}{\nu_1}\right]\circ\phi_{\tau_l(z_k,\cdot)}(F^l(z_k))\right\|_{\cC^q}&\leq \sum_{i=0}^q2^{q-i}\left\|\left[\frac{\langle V, \nabla \nu_1\rangle}{\nu_1}\right]\circ\phi_{\cdot}(F^l(z))\right\|_{\cC^q}\|\tilde\nu_{F^l(z_k),l}\|_{\cC^{q-1}}^i\\
&\leq \Const \sum_{i=0}^q A_{q-1}^{i}\lambda^{-i l}\leq \Const A_{q-1}^q.
\end{split}
\]
Thus,
\[
\begin{split}
\|\tilde\nu_{z_k,m}\|_{\cC^{q+1}}&=2^q \|\tilde\nu_{z_k,m}\|_{\cC^0}+\|\frac{d}{ds}\tilde \nu_{z_k,m}\|_{\cC^{q}}\\
&\leq 2^q \|\tilde\nu_{z_k,m}\|_{\cC^0}+\|\tilde \nu_{z_k,m}\|_{\cC^{q}}\sum_{l=0}^{m-1}\Const A_{q-1}^q A_{q-1}\lambda^{-l}\\
&\leq \left[2^q+A_q \Const A_{q-1}^{q+1}\right] \|\tilde\nu_{z_k,m}\|_{\cC^0}=:A_{q+1} \|\tilde\nu_{z_k,m}\|_{\cC^0}.
\end{split}
\]
We did not try to optimize the above computation since the only relevant point is that the $A_q$ do not depend on $m$.
Accordingly, if we choose $\delta$ small (and hence $m$ large) enough, we have $\|\tilde\nu_{z_k,m}\|_{\cC^{q}}\leq \frac 14$ for all $q\leq r-1$.
Using the above fact in \eqref{eq:hoihoi} yields
\[
\|\widehat{\psi}^-_{k}\|_{\cC^r} \leq 2^r+C_*\sum_{j=1}^r 4^{-j}=2^r+\frac 13 C_*
\]
which implies the Lemma provided we choose $C_*$ large.
\end{proof}
By \eqref{eq:change-time}, we have
\[
\bH_{F^{n_k^-}(x), \psi_k^-}(\cL_{\bF}^{n_k^-}\bg)=\bH_{F^{n_{k+1}^-}(x), \widehat{\psi}_{k}^-}(\cL_{\bF}^{n_{k+1}^-}\bg). 
\]
Again we can write  $\psi^-_{k+1}=(1-\psi)\widehat{\psi}_{k}^{-}\Id_{[0,2\delta]}$ and $\vf_{k+1}^-=\widehat{\psi}_{k}^{-}-\psi^-_{k+1}$. Then,
\[
\sup\{\|\psi^-_{k+1}\|_{\cC^{r}([0,1],\bR)},\|\vf_{k+1}^-\|_{\cC_0^r(\bR,\bR)}\} \leq C_*
\]
and $[0,\delta]\subset\supp\psi^-_{k+1}\subset [0,2\delta]$. Accordingly
\[
\bH_{F^{n_k^-}(x), \psi_k^-}(\cL_{\bF}^{n_k^-}\bg)=\bH_{F^{n_{k+1}^-}(x), \psi_{k+1}^-}(\cL_{\bF}^{n_{k+1}^-}\bg)+\bH_{F^{n_{k+1}^-}(x), \vf_{k+1}^-}(\cL_{\bF}^{n_{k+1}^-}\bg).
\]
The Lemma is thus proven by taking $k=K$, so that $n^\pm_K=0$.\footnote{ If more steps are needed on one side, say the plus side, one can simply set $n^-_{k+1}=n^-_k$, $\psi^-_{k+1}=\psi^-_{k}$ and $\vf^-_k=0$ for all the extra steps.}
\end{proof}
\section{Coboundary regularity}\label{sec:cob}
We first prove several claims stated in the introduction and set up some notation. Then we prove our main results concerning coboundary regularity.
\subsection{Parabolic}\ \newline\label{subsec:parabolic}
In the introduction we called the flow $\phi_t$ {\em parabolic}, but no evidence was provided for this name. It is now time to substantiate such an assertion.
\begin{rem}\label{rem:oliver} Note that the following Lemma shows only that the differential cannot grow more than polynomially, yet the possibility remains open that it does not grow at all, as in the linear model (or when the map is Lipschitz conjugated to the linear model). In such a case the flows should be more properly called {\em elliptic}. This is not always the case, as one can see in an explicit class of examples worked out in Section \ref{sec:non-lin-ex} (see Lemma \ref{lem:true-parabolic}).
\begin{lem}\label{lem:parabolic}
There exists $C,\beta>0$ such that, for all $x\in \bT^2$ and $t\in\bR$, letting $\xi(s)=D_x\phi_{s}$, we have
\[
\|\xi\|_{\cC^{r-1}((0,t),GL(2,\bR))}\leq C |t|^\beta.
\]
\end{lem}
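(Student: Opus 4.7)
The strategy I would use is to exploit the renormalization identity \eqref{eq:commute} to express the long-time Jacobian $\xi(s)$ in terms of short-time flow data conjugated by a power of $F$. Since then all hyperbolic expansion will come from $DF^n$ with $n = O(\ln s)$, this produces only polynomial growth in $s$.

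Fix $s\in(0,t)$ and choose $n = n_s$ as in Lemma~\ref{lem:suffice}, so that $\tau_n(x,s)\in(0,1]$ and, by~\eqref{eq:nt-bound}, $n\leq \Const + \htop^{-1}\ln s$. Differentiating \eqref{eq:commute} in $x$ yields
\[
\xi(s) = D_{y_s}F^{-n}\cdot\bigl[D_{F^n(x)}\phi_{\tau_n(x,s)}\cdot D_xF^n + V(y_s)\otimes \nabla_x\tau_n(x,s)\bigr], \quad y_s:=F^n(\phi_s(x)).
\]
The three ingredients on the right-hand side are easy to control separately. Hyperbolicity of $F$ gives $\|DF^{\pm n}\|\leq \Const\,\lambda^n\leq \Const\,s^{\ln\lambda/\htop}$, while the short-time Jacobian $D\phi_\sigma$ with $\sigma\in[0,1]$ is uniformly bounded (and $\cC^{r-1}$ in $\sigma$) by Definition~\ref{def:flowmapsetup} and \eqref{eq:smooth-hp}.

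The delicate term is $\nabla_x\tau_n(x,s)$. The integral representation $\tau_n(x,s)=\int_0^s\nu_n(\phi_\sigma x)\,d\sigma$ is useless on its own because its gradient involves $D\phi_\sigma$, which is what we are trying to estimate. Instead, I would differentiate the identity $\phi_{\tau_n(x,s)}(F^n x)=F^n(\phi_s(x))$ directly in $x$ and contract the resulting relation with a covector $\eta\in T_{y_s}^*\bT^2$ that annihilates $E^u(y_s)$ while having $|\langle\eta,V(y_s)\rangle|\geq c>0$ (this is possible since $V\in E^s$ is transverse to $E^u$). Because $DF^n$ preserves $E^u$ and $DF^{-n}$ contracts it by $\lambda^{-n}$, the contraction kills the $\xi(s)$-dependent piece of the right-hand side and yields $|\nabla_x\tau_n(x,s)|\leq \Const\,\lambda^n$. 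Plugging back produces the pointwise bound $\|\xi(s)\|\leq \Const\,\lambda^{2n}\leq \Const\,s^{\beta_0}$ with $\beta_0=2\ln\lambda/\htop$.

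For the $s$-derivatives up to order $r-1$, I would differentiate the same renormalization formula in $s$: the only $s$-dependence on the right-hand side sits in the short time $\tau_n(x,s)$, whose $s$-derivatives are controlled via $\partial_s\tau_n=\nu_n\circ\phi_s$ together with the $\cC^r$ regularity of the orbit map $\sigma\mapsto\phi_\sigma(y)$ coming from \eqref{eq:smooth-hp}; iterating this with a Faà di Bruno type bound analogous to the one used in Sublemma~\ref{sublem:sigh} shows that each successive differentiation only costs an extra polynomial factor in $s$, yielding the claimed $\cC^{r-1}$ estimate with a possibly larger exponent $\beta$. The main obstacle is precisely the control of $\nabla_x\tau_n$ and its higher derivatives: the integral representation is unusable because it is self-referential in $D\phi$, and the fix is to work with the algebraic renormalization identity combined with the invariance of $E^u$ under $DF^n$.
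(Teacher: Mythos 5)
There is a genuine gap in your control of $\nabla_x\tau_n(x,s)$. Contracting the differentiated renormalization identity
\[
V(y_s)\otimes\nabla_x\tau_n(x,s)\;+\;D_{F^n x}\phi_{\tau_n}\cdot D_xF^n \;=\; D_{\phi_s(x)}F^n\cdot\xi(s)
\]
with a covector $\eta$ annihilating $E^u(y_s)$ does \emph{not} kill the $\xi(s)$-dependent piece. You only get that $\eta\cdot D_{\phi_s(x)}F^n$ annihilates $E^u(\phi_s(x))$ and has norm $O(\lambda^{-n})$ on $E^s(\phi_s(x))$; the resulting estimate is
$\|\nabla_x\tau_n\|\le \Const\lambda^n + \Const\lambda^{-n}\|\xi(s)\|$, which still involves the unknown $\|\xi(s)\|$. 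Plugging this back into the renormalization formula for $\xi(s)$ (where the correction matrix $A_{x,s,n}=\Id+\nu_n^{-1}V\otimes\nabla\tau_n$ contributes a term of size $\lambda^n\|\nabla\tau_n\|$) produces an inequality of the form $\|\xi(s)\|\le\Const\lambda^{2n}+\Const\|\xi(s)\|$ with a constant that is not small, so the bootstrap does not close. Your announced bound $\|\xi(s)\|\le\Const\,s^{2\ln\lambda/\htop}$ is therefore not established (and is in fact better than what the paper proves, which allows an extra factor of $s$).

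The paper sidesteps the self-referential $\nabla\tau_n$ altogether. Working in the frame $\{V(x),V^\perp(x)\}$ (with $V^\perp$ perpendicular to $V$ and $\|V^\perp\|=\|V\|^{-1}$), the Jacobian is upper triangular, $D_x\phi_t=\bigl(\begin{smallmatrix}1&a\\0&b\end{smallmatrix}\bigr)$. The key observation is that the $\nabla\tau_n$-term in \eqref{eq:renorm-deriv} is rank one with image in the flow direction $V(\phi_t(x))$, hence it is annihilated when one pairs with $V^\perp(\phi_t(x))^*$: the entry $b(x,t)$ is expressed purely as $\langle V^\perp(\phi_t x),\,DF^{-n}D\phi_{\tau_n}DF^n V^\perp(x)\rangle$, which one bounds by $\Const\lambda^{2n}\le\Const t^{\beta_0}$ with no reference to $\nabla\tau_n$. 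The remaining entry $a(x,t)$ \emph{does} involve $\nabla\tau_n$, and the paper treats it not by renormalization but by the semigroup cocycle identity $a(x,m)=\sum_{j}a(\phi_j x,1)\,b(\cdot,\cdot)$, yielding $|a(x,m)|\le\Const m^{\beta_0+1}$. Finally, the $\cC^{r-1}$ regularity in $s$ is obtained from the linear ODE $\dot\xi=(DV\circ\phi_s)\,\xi$ using the known $\cC^{r-2}$ regularity of $DV$ along stable leaves, which is cleaner than differentiating the renormalization identity (and also avoids the issue that your choice $n=n_s$ jumps discontinuously in $s$). If you want to keep your approach, you would need to replace the covector contraction by the paper's two-step decomposition (control $b$ by renormalization, then $a$ by the cocycle relation), or supply some other mechanism that actually breaks the circularity.
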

\begin{proof}
Since $\phi_{-t}=\phi^-_t$, where $\phi^-_t$ is the flow generated by $-V$, and the following argument is insensitive to orientation, it suffices to consider the case $t\geq 0$.
It turns out to be convenient to define $V^\perp (x)$ as the perpendicular vector to $V(x)$ such that $\|V^\perp(x)\|=\|V(x)\|^{-1}$. In this way we can use $\{V(x),V^\perp(x)\}$ as basis of the tangent space at $x$, and the changes of variable are uniformly bounded, with  determinant one and $\cC^r $ in the flow direction. In such coordinates we have
\begin{equation}\label{eq:expmat}
D_x\phi_t=\begin{pmatrix}1&a(x,t)\\0&b(x,t)\end{pmatrix}.
\end{equation}
To have a more precise understanding of the above matrix elements, we have to use the knowledge that the dynamics is renormalizable.
To start with we must differentiate \eqref{eq:commute}:
\begin{equation}\label{eq:a-palle0}
\begin{split}
D_{\phi_t (x)}F^n  \cdot D_x\phi_t &= D_{F^n(x)}\phi_{\tau_{n}(x,t)}\cdot D_xF^n+V(\phi_{\tau_{n}(x,t)}(F^n(x)))\otimes\nabla \tau_n(x,t)\\
&=D_{F^n(x)}\phi_{\tau_{n}(x,t)}\cdot D_xF^n\left[\Id+\nu_n(x)^{-1}V(x)\otimes\nabla \tau_n(x,t)\right],
\end{split}
\end{equation}
where we have used \eqref{eq:defvn} and \eqref{eq:flipvphi}.
Hence, setting 
\begin{equation}\label{eq:a-palle}
A_{x,t,n}=\Id+\nu_n(x)^{-1}V(x)\otimes\nabla \tau_n(x,\tau_n^{-1}(x,t)),
\end{equation}
we have
\begin{equation}\label{eq:renorm-deriv}
\begin{split}
D_x\phi_t =&D_{\phi_{\tau_{n}(x,t)}\circ F^n (x)}F^{-n}\cdot D_{F^n(x)}\phi_{\tau_{n}(x,t)}\cdot D_xF^n \cdot A_{x,\tau_n(x,t),n}.
\end{split}
\end{equation}
Thus, by equations \eqref{eq:expmat} and \eqref{eq:renorm-deriv} we have, for each $n\in\bN$,
\[
b(x,t)=\langle V^\perp(\phi_t(x)), D_{\phi_{\tau_{n}(x,t)}\circ F^n (x)}F^{-n}\cdot D_{F^n(x)}\phi_{\tau_{n}(x,t)}\cdot D_xF^n\cdot A_{x,\tau_n(x,t),n}V^\perp(x)\rangle
\]
\begin{equation}\label{eq:uff}
\hskip-.7cm=\langle V^\perp(\phi_t(x)), D_{\phi_{\tau_{n}(x,t)}\circ F^n (x)}F^{-n}\cdot D_{F^n(x)}\phi_{\tau_{n}(x,t)}\cdot D_xF^nV^\perp(x)\rangle.
\end{equation}
Choose $n$ so that $\tau_n\in [\Lambda^{-1}, 1]$, hence $n$ is proportional to $\ln t$. By compactness it follows that $\|D_{F^n(x)}\phi_{\tau_{n}(x,t)}\|\leq \Const$. Hence, there exists $\beta_0>0$ such that
\[
\sup_{x\in\bT^2}|b(x,t)|\leq \Const t^{\beta_0}.
\]
On the other hand, by the semigroup property, for each $m\in\bN$,
\[
D_x\phi_m=\prod_{i=0}^{m-1}\begin{pmatrix} 1&a(\phi_{i}(x),1)\\ 0&b(\phi_i(x),1)\end{pmatrix}=\begin{pmatrix} 1&\sum_{j=0}^{m-1}a(\phi_j(x),1)b(x,j) \\0&b(x,m)\end{pmatrix}.
\]
Since, again by compactness, $|a(x,1)|\leq \Const$, it follows
\[
|a(x,m)|\leq \Const\sum_{j=0}^{m-1}|b(x,j)|\leq \Const\sum_{j=0}^{m-1}j^{\beta_0}\leq \Const m^{\beta_0+1}.
\]
Hence 
\[
\|\xi\|_{\cC^0((0,t), GL(2,\bR))}\leq \Const t^\beta
\]
 with $\beta=\beta_0+1$.

To estimate the derivatives notice that $\dot \xi(s)=D_{\phi_s(x)}V\xi(s)$. To understand the regularity of the above equation, recall that the stable foliation can be expressed in local coordinates by $(x_1,G(x_1,x_2))$, where  $G(\cdot, x_2) \in \cC^r$, $G(0,x_2)=x_2$, so that $\{(x_1,G(x_1,x_2))\}_{x_1\in\bR}$ is the leaf through the point $x=(x_1,x_2)$, and $(1, \partial_{x_1}G(x ))=V(x)$. It is known that, in such coordinates, $\partial_{x_2} G(\cdot,x_2)\in\cC^{r-1}$ uniformly, see \cite{Hasselblatt97} and references therein. Then, by Schwarz Theorem \cite{Rudin}, if follows that $\partial_{x_2}\partial_{x_1}G(\cdot, x_2)\in \cC^{r-2}$. Hence, $DV\circ \phi_t$ is a $\cC^{r-2}$ function of $t$, with uniformly bounded norm. Accordingly, for each $k\in\{0, \dots,r-2\}$,
\[
\begin{split}
\|\xi\|_{\cC^{k+1}((0,t), GL(2,\bR))}&\leq \|\dot \xi\|_{\cC^{k}((0,t), GL(2,\bR))}+2^{k+1}\|\xi\|_{\cC^{0}((0,t), GL(2,\bR))}\\
&\leq C_k\|\xi\|_{\cC^{k}((0,t), GL(2,\bR))},
\end{split}
\]
from which the Lemma readily follows.
\end{proof}

\end{rem}

\subsection{Some preliminary facts}\ \newline\label{subsec:palle}
In this section we establish some facts and formulae needed in the following.

First of all  recall that given a one form $\omega(x)=\sum_{i=1}^2 a_i(x) dx_i$ and a diffeomorphism $G\in\cC^1(\bT^2,\bT^2)$ the pullback of the form is given by
\begin{equation}\label{eq:pullback}
G^*\omega(x)= a_i(G(x))(D_xG)_{ij}dx_j,
\end{equation}
where we have used the usual convention on the summation of repeated indexes; moreover recall that for a vector field $v$ the pushforward is given by
\[
G_*v(x)=D_{G^{-1}(x)}G\cdot v(G^{-1}(x)). 
\]
Next, we spell out the cocycle properties of $\tau_n$.
\begin{lem}\label{lem:Theta}
For each $x\in\bT^2$, $n,m\in\bN$ we have
\[
\tau_m(F^n(x),\tau_n(x,s))=\tau_{n+m}(x,s)\\
\]
\end{lem}
\begin{proof}
By definition
\[
\begin{split}
\tau_m(F^n(x),\tau_n(x,t))&=\int_0^{\tau_n(x,t)}\nu_m(\phi_s(F^n(x))) ds\\
&=\int_0^t \nu_m(\phi_{\tau_n(x,s)}(F^n(x)))\nu_n(\phi_{s} (x))ds\\
&=\int_0^t \nu_m(F^n\circ \phi_{s}(x))\nu_n(\phi_{s} (x))ds\\
&=\int_0^t \nu_{n+m}(\phi_{s} (x))ds=\tau_{n+m}(x,t),
\end{split}
\]
where we have used \eqref{eq:commute}. 
\end{proof}
By Lemma \ref{lem:Theta}, and using \eqref{def:eraverage}, \eqref{eq:commute}, we can write, for all $n\in\{0,\dots, n_T\}$,\footnote{ We have used the fact that, by definition, $\tau_{n_T}(x,T)\geq 1$ while $\supp\chi\subset [0,1]$.}
\begin{equation}\label{eq:newstart}
\oH_{T}(g)(x)=-\int_{\bRpe}\chi\circ \tau_{n_T-n}(F^n(x),s) \left( \frac{g}{\nu_{n}}\right)\circ F^{-n}\circ\phi_s\circ F^n(x) ds.
\end{equation}
As we are now aware of the fact that the discontinuity of the test function $\chi$ at zero will create problems,\footnote{ The integral will not belong to the dual of the appropriate Banach space.} we take care of the problem right away. Given $\varpi\in (0,1/4)$, small enough, let $T>0$ and $n_*\in\bN$ be large enough and such that $\sup_{z\in\bT^2}\tau_{n_*}(z,1)\leq \varpi$ and $n_T\geq n_*$. Then, we can chose $n=n_T-n_*$ and write 
\[
\begin{split}
\chi\circ \tau_{n_*}(F^{n_T-n_*}(x),s) &=\chi(s)\chi\circ \tau_{n_*}(F^{n_T-n_*}(x),s)+(1-\chi(s))\chi\circ \tau_{n_*}(F^{n_T-n_*}(x),s)\\
&=\chi(s)+(1-\chi(s))\chi\circ \tau_{n_*}(F^{n_T-n_*}(x),s).
\end{split}
\]
Thus, setting $\chi_*(z,s)=(1-\chi(\tau_{n_*}^{-1}(F^{-n_*}(z),s)))\chi(s)$, we can write \eqref{eq:newstart} as
\begin{equation}\label{eq:newstart1}
\begin{split}
\oH_{T}(g)(x)=&-\int_{\bR}\chi_*(F^{n_T}(x),s) \left( \frac{g}{\nu_{n_T}}\right)\circ F^{-n_T}\circ\phi_s\circ F^{n_T}(x) ds\\
&-\int_{\bRpe} \chi\circ \tau_{n_T-n_*}(x,s) g\circ \phi_s(x) ds\\
&=:\oH_{T}^{\star}(g)(x)-\int_{\bRpe} \chi\circ \tau_{n_T-n_*}(x,s) g\circ \phi_s(x) ds.
\end{split}
\end{equation}
Note that the first term contains now a smooth compactly supported test function, while the second term is exactly of the same initial form, that is \eqref{def:eraverage}, (apart  from the fact that $n_T$ is replaced by $n_T-n_*$). Hence, it suffices to study $\oH_{T}^{\star}(g)(x)$. Before doing so let us show that $\oH_{T}(g)$ is really the right quantity to consider.

\subsection{Continuous coboundary}\ \newline \label{sec:measurable-cob}
In Section \ref{subsec:proof-one} we have seen that if $g$ belongs to the kernel of enough distributions $O_i$, then the $H_{x,t}(g)$ are all uniformly bounded. In the introduction we claimed  that this implies that $g$ is a continuous coboundary, now is the time to prove it.
\begin{proof}[{\bf Proof of Lemma \ref{lem:cob-prelim}}]
Setting, as before, $\bg=g\circ\bpi$, by equations \eqref{def:eraverage}, \eqref{eq:molli},  \eqref{eq:change-time} and using repeatedly formula \eqref{eq:newstart1} we have
\[
\begin{split}
\oH_{T}(g)(x)&=-\bH_{x,\chi\circ\tau_{n_T}(x,\cdot)}(\bg)\\
&=-\sum_{l=0}^{K_T}\bH_{F^{n_*l}(x),\chi_*(F^{n_*l}(x),\cdot)}(\cL_{\bF}^{n_*l}\bg)-\int_{\bRpe} \chi(s) g\circ \phi_s(x) ds,
\end{split}
\]
where $K_T n_*=n_T$.
By hypothesis $\cO_i(g)=0$ for all $i\in\{1,\dots, N_1\}$,  hence $\|\cL_{\bF}^n\bg\|_{p,q}\leq C_g \theta^n$ for some $\theta\in (0,1)$ (see the proof of Theorem \ref{thm:main} for details). Thus, by Lemma \ref{lem:boundedfunctional}, it follows that also $|\bH_{F^{n}(x),\chi_*(F^{n_*l}(x),\cdot)}(\cL_{\bF}^{n}\bg)|\leq C_g \theta^n$. Hence, for all $T'\geq T>0$,
\begin{equation}\label{eq:c0conv}
\|\oH_{T}(g)-\oH_{T'}(g)\|_{\cC^0}\leq C_g \theta^{n_{T}},
\end{equation}
which, recalling \eqref{eq:nt-bound}, implies the existence of the limit.

To prove the second statement of the Lemma, observe that, recalling the properties of $\chi$ specified after \eqref{def:eraverage},
\[
\begin{split}
 &\langle V(x) , \nabla \oH_{T}(g)(x) \rangle  = - \int_0^T dt \,\chi\circ \tau_{n_T}(x,t) \langle D_x \phi_t V(x), (\nabla g) \circ \phi_t(x) \rangle \\
&\phantom{=}-  \int_0^T dt \,\chi'\circ \tau_{n_T}(x,t)\left[\int_0^t \langle D_x \phi_s V(x), (\nabla \nu_{n_T}) \circ \phi_s(x) \rangle ds\right] g\circ\phi_t(x) dt\\
&  = -\int_0^T dt \,\chi\circ \tau_{n_T}(x,t) \langle V(\phi_t(x)), (\nabla g) \circ \phi_t(x) \rangle \\
&\phantom{=}-  \int_0^T dt \,\chi'\circ \tau_{n_T}(x,t)\left[\int_0^t \langle V(\phi_s(x)), (\nabla \nu_{n_T}) \circ \phi_s(x) \rangle ds\right] g\circ\phi_t(x) dt
\end{split}
\]
where we have used \eqref{eq:flipvphi} and the notation of the previous section. Hence,
\begin{equation} \label{eq:homology2} 
\begin{split}
 &\langle V(x) , \nabla \oH_{T}(g)(x) \rangle  =-   \int_0^T dt \,\chi\circ \tau_{n_T}(x,t)  \left(\frac{d}{dt} g\circ \phi_t(x)\right)  \\
&\phantom{=}-  \int_0^T dt \,\chi'\circ \tau_{n_T}(x,t)\left[ \nu_{n_T} \circ \phi_t(x) -\nu_{n_T}(x)\right] g\circ\phi_t(x) dt\\
& =-   \int_0^T dt \, \frac{d}{dt}  \left(\chi\circ \tau_{n_T}(x,t)g\circ \phi_t(x)\right) +\nu_{n_T}(x) \int_0^T dt \,\chi'\circ \tau_{n_T}(x,t)  g\circ \phi_t(x)\\
& =    g(x)+\nu_{n_T}(x) \int_0^T dt \,\chi'\circ \tau_{n_T}(x,t)  g\circ \phi_t(x).
\end{split} \end{equation}
 On the other hand
\[
\int_0^T dt \,\chi'\circ \tau_{n_T}(x,t)  g\circ \phi_t(x)=\bH_{F^n(x),\chi'}(\cL_{\bF}^n\bg),
\]
which, by the same argument as before, is uniformly bounded. Integrating \eqref{eq:homology2} along the flow, yields, for all $t\in\bR$,
\[
\begin{split}
&\oH_{T}(g)(\phi_t(x))-\oH_{T}(g)(x)=\int_0^t\frac{d}{ds} \oH_{T}(g)(\phi_s(x)) ds\\
&=\int_0^t\langle V(\phi_s(x)) , \nabla \oH_{T}(g)(\phi_s(x)) \rangle ds
= \int_0^tg\circ \phi_s(x) ds+\cO(\lambda^{-n_T} t).
\end{split}
\]
The Lemma follows remembering \eqref{eq:nt-bound} and taking the limit $T\to\infty$.
\end{proof}
To study the coboundary regularity we will investigate the regularity of $\oH_{t}(g)$. In reality, we will investigate only the first derivative, see Remark \ref{rem:low-reg} for a discussion of this choice.
\subsection{An explicit formula for the coboundary derivative}\ \newline
Our goal here is to establish an explicit formula for the derivative of \eqref{eq:newstart1}.

For each vector field  $\bv\in\cC^0(\bT^2,\bR^2)$, noticing that\footnote{  Given a matrix $A$ we use $A^*$ to designate the transpose.}
\[
\nabla_z \chi_*(z,s)=-\chi(s)\chi'(\tau_{n_*}^{-1}(F^{-n_*}(z),s))\frac{(D_zF^{-n_*})^*\nabla_z \tau_{n_*}(F^{-n_*}(z),\tau_{n_*}^{-1}(F^{-n_*}(z),s))}{(\partial_t\tau_{n_*})(F^{-n_*}(z),\tau_{n_*}^{-1}(F^{-n_*}(z),s))},
\]
and setting, ${\boldsymbol s}_{*}(x,s)=\tau_{n_*}^{-1}(F^{-n_*}(x),s)$,
\begin{equation}\label{eq:vartheta}
\vartheta_{j}(x,s)=-\chi(s)\chi'({\boldsymbol s}_{*}(x,s))\frac{\langle\nabla_x \tau_{n_*}(F^{-n_*}(x),{\boldsymbol s}_{*}(x,s)), D_xF^{j-n_*}\bv\rangle}{\nu_{n_*}\circ\phi_{{\boldsymbol s}_{*}(x,s)}(F^{-n_*}(x))}
\end{equation}
we have that
\[
\begin{split}
 &\langle \bv (x), \nabla \oH_{T}^{\star}(g)(x) \rangle  =- \int_{\bR} ds \,\vartheta_{n_T}(F^{n_T}(x),s) \left(\frac{g}{\nu_{n_T}}\right) \circ F^{-{n_T}}\circ \phi_s\circ F^{n_T}(x)\\
&+ \int_{\bR} ds \,\chi_*(F^{n_T}(x),s)\langle D_{x}( F^{-{n_T}}\circ \phi_s \circ F^{n_T})\bv, \left[\frac{g}{\nu_{n_T}^2}\nabla \nu_{n_T} \right]\circ F^{-{n_T}}\circ \phi_s\circ F^{n_T}(x) \rangle \\
& - \int_{\bR} ds \,\chi_*(F^{n_T}(x),s) \langle D_{x}( F^{-n_T}\circ \phi_s \circ F^{n_T})\bv, \frac{\nabla g}{\nu_{n_T}} \circ F^{-{n_T}}\circ \phi_s\circ F^{n_T}(x) \rangle .
\end{split}
\]
Recall that
\begin{equation}\label{eq:hoi-hoi-vei}
\begin{split}
&\nabla\nu_{n}=\nabla\prod_{j=0}^{n-1}\nu_{1}\circ F^j=\sum_{j=0}^{n-1}\nu_{n}(DF^j)^*\left[\frac{\nabla \nu_{1}}{\nu_{1}}\circ F^j\right].
\end{split}
\end{equation}
Setting (with a mild abuse of notation) $\chi_*(s)=\chi_*(F^{n_T}(x),s)$, we can write
\begin{equation}\label{eq:deriv-firts}
\begin{split}
 &\langle \bv (x), \nabla \oH_{T}^{\star}(g)(x) \rangle  =- \int_{\bR} dt \,\vartheta_{n_T}(F^{n_T}(x),t) \left(\frac{g}{\nu_{n_T}}\right) \circ F^{-{n_T}}\circ \phi_t\circ F^{n_T}(x)\\
&\phantom{=}  - \int_{\bR} dt \,\chi_* (t) \frac{ \left[(F^{-n_T}\circ \phi_t)^*d g\right](F^{n_T}_*\bv)}{\nu_{n_T}\circ F^{-n_T}\circ \phi_t} \circ  F^{n_T}(x) \\
&\phantom{=}+ \sum_{j=0}^{n_T-1}\int_{\bR} dt \,\chi_*(t) \frac{\left[(F^{-n_T+j}\circ \phi_t)^*(\cL_F^j g \cdot d\ln\nu_1)\right] (F^{n_T}_*\bv)}{\nu_{n_T-j}\circ F^{-n_T+j}\circ \phi_t} \circ  F^{n_T}(x) dt.
\end{split}
\end{equation}
Next, we need an explicit formula for $d\ln\nu_1$. To this end notice that
\begin{equation}\label{eq:V-deriv}
\partial_{x_k}V=p^*_{k} V+p_{k}\hat V^\perp,
\end{equation} 
where $(\hat V_1,\hat V_2)=\hat V=\|V\|^{-1}V$ and $\hat V^\perp=(-\hat V_2,\hat V_1)$. Then, differentiating $\|V\|^2$ and $DF V=\nu_1 V\circ F$, respectively, we have 
\begin{equation}\label{eq:odio0}
\begin{split}
&p^*_{k}=\partial_{x_k}\ln\|V\|\in C^{r-1}\\
&(\partial_{x_k}DF) V+p_{k} DF \hat V^\perp=-p^*_{k}\nu_1 V\circ F+\partial_{x_k} \nu_1  V\circ F\\
&\phantom{(\partial_{x_k}DF) V+p_{k} DF \hat V^\perp=}
+\nu_1 \sum_j \partial_{x_k}F_j\left[p^*_{j} V+p_{j}\hat V^\perp\right]\circ F.
\end{split}
\end{equation}
Multiplying the latter by $\hat V^\perp\circ F$, since $DF^*( \hat V^\perp\circ F)=\frac{\|V\|\det DF }{\nu_1\|V\circ F\|} \hat V^\perp$, yields
\[
\langle \hat V^\perp\circ F, \partial_{x_k}DF V\rangle+p_{k}\frac{ \|V\|\det DF}{ \|V\|\circ F\nu_1}=\nu_1 \sum_j \partial_{x_k}F_jp_{j}\circ F.
\]
Note that, due to the condition that $F$ is uniformly hyperbolic we can assume (eventually using a power of $F$ instead of $F$) 
\begin{equation}\label{eq:hyp-one-step}
\frac{ \|V\|\det DF}{ \|V\|\circ F\nu_1}>\lambda>1>\lambda^{-1}>\nu_1;
\end{equation}
hence, setting 
\begin{equation}\label{eq:gamma-def}
\Gamma(x,v)_k=\langle (D_xFv)^\perp, \partial_{x_k}D_xF v\rangle \frac{\|V(x)\|}{\det D_xF},
\end{equation}
we have\footnote{ By $\Gamma(\hat V)$ we mean the function $\Gamma(\cdot,\hat V(\cdot))$ and $p=(p_1,p_2)\in\bR^2$.}
\begin{equation}\label{eq:diff-reg0}
p=\frac{\nu_1^2\|V\|\circ F}{\|V\|\det DF} DF^* p\circ F-\Gamma(\hat V).
\end{equation}
It is then natural to set 
\begin{equation}\label{eq:A-def}
\Acc=\frac{\nu_1^2\|V\|\circ F}{\|V\|\det DF}
\end{equation}
 and write\footnote{ The series is convergent due to \eqref{eq:hyp-one-step}.}
\begin{equation}\label{eq:diff-reg}
p=-\sum_{m=0}^\infty\left[\prod_{j=0}^{m-1}\Acc\circ F^{j}\right](DF^m)^*\Gamma(\hat V)\circ F^m.
\end{equation}
In the same way, but multiplying the second of \eqref{eq:odio0} by $\hat V\circ F$, we obtain
\begin{equation}\label{eq:diff-nu0}
\begin{split}
\partial_{x_k}\ln \nu_1&=p_{k}\Dcc+\Bcc_k\\
\Bcc_k&=\frac{\langle \hat V\circ F, (\partial_{x_k}DF)\hat V\rangle\|V\|}{\nu_1\|V\|\circ F}+p^*_{k}-(DF^* p^*\circ F)_k\\
\Dcc&=\frac{\langle \hat V\circ F, DF\, \hat V^\perp\rangle}{\nu_1\|V\|\circ F}.
\end{split}
\end{equation}
Note that, by equations \eqref{eq:diff-nu0} and \eqref{eq:diff-reg},
\begin{equation}\label{eq:E-split}
\begin{split}
&(DF^{-n_T+j})^*(\nabla \ln \nu_1)\circ F^{-n_T+j}=(DF^{-n_T+j})^*\Bcc\circ F^{-n_T+j}\\
&-\Dcc\circ F^{-n_T+j}\sum_{m=0}^{n_T-j}\left[\prod_{l=0}^{m-1}\Acc\circ F^{l-n_T+j}\right](DF^{-n_T+j+m})^*\Gamma(\hat V)\circ F^{-n_T+j+m}\\
&-\Dcc\circ F^{-n_T+j}\sum_{m=1}^{\infty}\left[\prod_{l=-n_T+j}^{m-1}\Acc\circ F^{l}\right](DF^{m})^*\Gamma(\hat V)\circ F^{m}.
\end{split}
\end{equation}
We can thus express the last line of \eqref{eq:deriv-firts} in terms of the transfer operators (acting on one forms $\omega=\langle\bar\omega(x), dx\rangle$ and functions $g$, respectively)
\begin{equation}\label{eq:echecavolo}
\begin{split}
&(\hL_F\,\omega)_x=\langle (D_xF^{-1})^*(\nu_1^{-1}\bar\omega)\circ F^{-1}(x), dx\rangle\\
&\cL_{F,A}g=\cL_F(A g).
\end{split}
\end{equation}
Indeed, using the above notation, equation \eqref{eq:E-split}, and setting $\omega_B=\langle B,dx\rangle$ and $\omega_\Gamma=\langle \Gamma(\hat V), dx\rangle$, allows to rewrite \eqref{eq:deriv-firts} as
\begin{equation}\label{eq:deriv-second-00}
\begin{split}
& \langle \bv (F^{-n_T}(x)), \nabla \oH_{T}^{\star}(g)(F^{-n_T}(x)) \rangle 
= - \int_{\bR} dt \,\vartheta_{n_T}(x,t) (\cL_F^{n_T}g)\circ \phi_t(x)\\
&- \int_{\bR} dt \,\chi_*(x,t)\{[\phi_t^*\hL_F^{n_T}dg](F^{n_T}_*\bv)\} (x) \\
&+ \sum_{j=0}^{n_T-1}\int_{\bR} dt \,\chi_*(x,t)  \{[\phi_t^*\hL_F^{n_T-j}((\cL_F^j g)\cdot \omega_B) ](F^{n_T}_*\bv)\}(x) \\
&-\sum_{j=0}^{n_T-1}\sum_{m=0}^{n_T-j}\int_{\bR} dt \,\chi_*(x,t)  \{[\phi_t^*(\hL_{F}^{n_T-j-m}(\cL_{F,A}^m E \cL_F^{j} g)\cdot \omega_\Gamma) ](F^{n_T}_*\bv)\}(x) \\
&-\sum_{j=0}^{n_T-1}\sum_{m=1}^{\infty}\int_{\bR} dt \, \Psi_m(x,t) (\cL_{F,A}^{n_T-j} E\cL_F^{j} g)\circ \phi_t (x)\;;\\
& \Psi_m(x,t)= \chi_*(x,t)\prod_{l=0}^{m-1} A\circ F^{l}\circ \phi_t(x)\cdot [(F^m\circ\phi_t)^*\omega_{\Gamma}(F_*^{n_T}\bv)(x)].
\end{split}
\end{equation}
Next, we need bounds on $\vartheta_m$ and $\Psi_m$.\footnote{ Remark that the point of the next Lemma is that the bounds do not depend on $r$, apart from an irrelevant multiplicative constant.} Let us call $\nu^u_n$ the maximal eigenvalue of $DF^n$, then $\|D_xF^n\|\leq \Const \nu^u_n(x)$.
\begin{lem}\label{lem:test-function}
For each $m\in\bN$ and $x\in\bT^2$, we have, 
\[
\begin{split}
&\|\vartheta_m(x,\cdot)\|_{\cC^{r-1}(\bRp,\bR)}\leq C_{r,n_*} \nu^u_{m}(x)\|\bv\| \\
&\|\Psi_m(x,\cdot)\|_{\cC^{r-1}(\bRp,\bR)}\leq C_{r,n_*} \nu_m(x)\nu^u_{n_T}(x)\|\bv\| .
\end{split}
\]
\end{lem}
\begin{proof}
First of all note that, by the smoothness of the stable manifolds of an Anosov map (see \cite{Hasselblatt97} and references therein) it follows that $\nu_1\circ\phi_{(\cdot)}\in\cC^{r-1}(\bR, \cC^{1+\alpha}(\bT^2))$. Hence, by using repeatedly Schwarz theorem \cite{Rudin}, we have that, for each $p< r$, $\nabla(\partial_t^p\nu_1)\circ\phi_{t}(x)=\partial^p_t(\nabla\nu_1)\circ\phi_{t}(x)$. This implies $\sup_{x\in\bT^2}\|\nabla\nu_1\circ\phi_{(\cdot)}(x)\|_{\cC^{r-1}}\leq C_r$. Also, since $\cC^{r-1}$ is a Banach algebra and
\[
\begin{split}
&\partial_t\nu_n\circ \phi_t(x)=\sum_{k=0}^{n-1}\nu_n\circ \phi_t(x)\nu_k\circ\phi_t(x)\langle V,\nabla(\ln\nu_1)\rangle\circ F^k\circ \phi_t(x)\\
&\partial_tD_{\phi_{t}(x)}F^n=\sum_{k=0}^{n-1}\sum_{j=1}^2\left[D_{\phi_{t}(x)}F^{n-k-1}\partial_{x_j}D_{F^k\circ\phi_{t}(x)}F D_{\phi_t(x)}F^k\right]\nu_{j}\circ \phi_t(x)V_j\circ F^k\circ \phi_t(x)
\end{split}
\]
we have, by induction on $n$ and $r$, $\|\nu_n\circ \phi_{(\cdot)}(x)\|_{\cC^{r-1}((0,1),\bR)}\leq C_r \|\nu_n(\phi_{(\cdot)}x)\|_{\cC^{0}((0,1),\bR)}$ and $\|D_{\phi_{(\cdot)}(x)}F^n\|_{\cC^{r-1}((0,1),\bR)}\leq C_r\|\nu^u_n\circ \phi_{(\cdot)}(x)\|_{\cC^{0}((0,1),\bR)}$, thus, recalling \eqref{eq:hoi-hoi-vei},  $\|\nabla\nu_n\circ \phi_{(\cdot)}(x)\|_{\cC^{r-1}((0,1),\bR)}\leq C_r \nu_n(x)\nu^u_n(x)$.\footnote{ Here we have used Gr\"onwall's inequality to prove $\|\nu_n\circ \phi_{(\cdot)}(x)\|_{\cC^{0}((0,1),\bR)}\leq\Const \nu_n(x)$, and the same for $\nu^u_n$.}

In addition, by \eqref{eq:param-tau} we have
\[
\nabla \tau_n(x,t)=\int_0^tD_{x}\phi_s^*\nabla \nu_n\circ \phi_s(x) ds.
\]
It follows, using Lemma \ref{lem:parabolic} and since $\cC^r$ is an algebra, that
\begin{equation}\label{eq:tauder}
\|\nabla\tau_n\circ\phi_{(\cdot)}(x)\|_{\cC^{r}((0,1),\bR^2)}\leq C_r \nu_n(x)\nu^u_n(x).
\end{equation}
The first inequality in the Lemma follows remembering the definition \eqref{eq:vartheta}.

Let us prove the second. By equations \eqref{eq:gamma-def}, \eqref{eq:A-def} and the smoothness of the stable manifolds of an Anosov map (see \cite{Hasselblatt97} and references therein) it follows that $\Gamma(\hat V)\circ \phi_t$ and  $A\circ \phi_t$  are uniformly (in $x$) $\cC^{r-1}$-bounded functions of $t$.
For each $x\in \bT^2$ we have
\[
\partial_t\prod_{l=0}^{m-1} A\circ F^{l}\circ \phi_{t}(x)=\prod_{l=0}^{m-1} A\circ F^{l}\circ \phi_{t}(x) \cdot\sum_{l=0}^{m-1}\frac{ \nu_l\circ \phi_t (\partial_s A\circ \phi_s|_{s=0})\circ F^{l}\circ \phi_{t}(x)}{A\circ F^{l}\circ \phi_{t}(x)}.
\]
Since each further derivative of a function composed with $F^{l}\circ \phi_{t}$ produces the multiplicative factor $\nu_l$, it follows
\[
\left\|\sum_{l=0}^{m-1}\frac{ \nu_l\circ \phi_t (\partial_s A\circ \phi_s|_{s=0})\circ F^{l}\circ \phi_{t}(x)}{A\circ F^{l}\circ \phi_{t}(x)}\right\|_{\cC^{r-2}}\leq \Const.
\]
On the other hand, notice that $\det D_xF=\nu_1(x)\nu^u_1(x)\frac{\theta\circ F(x)}{\theta(x)}$, where $\theta(x)$ depends only on the angle between the stable and unstable direction at $x$ and on $\|V(x)\|$. Accordingly,
\[
\left\|\prod_{l=0}^{m-1} A\circ F^{l}\circ \phi_{(\cdot)}(x)\right\|_{\cC^0}\leq \Const \frac{\nu_m(x)}{\nu^u_m(x)}.
\]
Thus, we have, by induction,
\[
\begin{split}
\left\|\prod_{l=0}^{m-1} A\circ F^{l}\circ \phi_{(\cdot)}(x)\right\|_{\cC^{r-1}}{\hskip -12pt}&\leq 2^{r-1}\left\|\prod_{l=0}^{m-1} A\circ F^{l}\circ \phi_{(\cdot)}(x)\right\|_{\cC^{0}}+\left\|\partial_t\prod_{l=0}^{m-1} A\circ F^{l}\circ \phi_{(\cdot)}(x)\right\|_{\cC^{r-2}}\\
&\leq 2^{r-1}\left\|\prod_{l=0}^{m-1} A\circ F^{l}\circ \phi_{(\cdot)}(x)\right\|_{\cC^{0}}+\Const \left\|\prod_{l=0}^{m-1} A\circ F^{l}\circ \phi_{(\cdot)}(x)\right\|_{\cC^{r-2}}\\
&\leq C_r\left\|\prod_{l=0}^{m-1} A\circ F^{l}\circ \phi_{(\cdot)}(x)\right\|_{\cC^{0}}\leq C_r  \frac{\nu_m(x)}{\nu^u_m(x)}.
\end{split}
\]
Analogously, $\|D_xF^m\circ \phi_{(\cdot)}\|_{\cC^{r-1}}\leq C_r\nu^u_m(x)$, from which the Lemma follows.
\end{proof}
As in section \ref{sec:growth} we are left with one last problem: the potentials may be non smooth. Such a problem can be solved in the same way as before:  extending all the objects to a subset $\Omega$ of the unitary tangent bundle.

Recall that $(x,v) \in \Omega$ is a three dimensional subset of $\mathbb{T}^2 \times \mathbb{R}^2 $, thus we can naturally write vectors in $T\Omega$ as $(w,\eta)$, $w\in T\bT^2$ and $\eta\in\bR^2$. Accordingly,  a one form $\bgg$ on $\Omega$ at a point $(x,v)$ acts on a vector $(w,\eta)$ as $\bgg_{(x,v)}((w,\eta))$.

Next, let us define
\begin{equation}\label{eq:stop}
\begin{split}
\hAcc\circ \bF^{-1}(x,v)&=\frac{\|V\circ F^{-1}(x)\|\det D_{x}F^{-1}}{\| D_xF^{-1} v\|^2\|V(x)\|}\\
\hBcc_k(x,v)&=\frac{\langle D_xFv, (\partial_{x_k}D_xF)v\rangle}{\|D_xF v\|^2}+p^*_{k}(x)-(D_xF^* p^*\circ F(x))_k\\
\hDcc(x,v)&=\frac{\langle D_xFv, D_xF\, v^\perp\rangle}{\|D_xF v\|^2\|V(x)\|}.
\end{split}
\end{equation}
We can then define the operators, acting, respectively, on functions $\bg$ and on one forms $\bgg$ defined on $\Omega$ by
\begin{equation}\label{eq:new-new} 
\begin{split}
&\cL_{\bF,\hAcc}\bg=\cL_{\bF}(\hAcc \bg)\\
&\left[\widehat\cL_{\bF} \bgg\right]_{(x,v)}= \frac{\|D_xF^{-1}v\|\,\|V(x)\|}{ \|V\circ F^{-1}(x)\|} \left[(\bF^{-1})^*\bgg\right]_{(x,v)}.
\end{split}
\end{equation}
The relation with the previously defined operators is given by the following Lemma.
\begin{lem}\label{lem:LtoL}
For each $x\in\bT^2$ and $w\in\bR^2$ we have\footnote{ See \eqref{eq:echecavolo} for the definition of $\cL_{F,A}$ and $\widehat \cL_F$.}
\[
\begin{split}
&[\cL_{\bF,\hAcc} (g\circ\bpi)](x,\hat V(x))=\cL_{F,A}( g)(x)=(\cL_{F,A}( g))\circ \bpi(x, \hat V(x))\\
&\left[\widehat\cL_{\bF} \bpi^*dg\right]_{(x,\hat V(x))}\!\!(w,0)= \left[\widehat\cL_F dg\right]_{x}\!\!(w)=\left[\bpi^*(\widehat\cL_F dg)\right]_{(x,\hat V(x))}\!\!(w,0) .
\end{split}
\]
\end{lem}
\begin{proof}
By direct computation $\hAcc(x,\hat V(x))=A(x)$ and the first statement of the Lemma follows. The second follows directly from the definition since $(\bF^{-n})^* \bpi^* dg=\bpi^* (F^{-n})^*dg$.
\end{proof}
Recalling equation \eqref{eq:LtoL} and Lemma \ref{lem:LtoL}, and setting $\bx=(x, \hat V(x))$,  $\bg=g\circ\bpi$, we can rewrite \eqref{eq:deriv-second-00} as:
\begin{equation}\label{eq:deriv-second-0}
\begin{split}
& \langle \bv (F^{-n_T}(x)), \nabla \oH_{T}^{\star}(g)(F^{-n_T}(x)) \rangle 
= - \int_{\bR} dt \,\vartheta_{n_T}(x,t) (\cL_{\bF}^{n_T}\bg)\circ \bphi_t(\bx)\\
&- \int_{\bR} dt \,\chi_*(x,t)\{[\bphi_t^*\hL_{\bF}^{n_T}\bpi^*dg](\bF^{n_T}_*(\bv,0))\} (\bx) \\
&+ \sum_{j=0}^{n_T-1}\int_{\bR} dt \,\chi_*(x,t)  \{[\bphi_t^*\hL_{\bF}^{n_T-j}((\cL_{\bF}^j \bg)\cdot \hbomega_B) ](\bF^{n_T}_*(\bv,0))\}(\bx) \\
&-\sum_{j=0}^{n_T-1}\sum_{m=0}^{n_T-j}\int_{\bR} \hskip-2pt dt \,\chi_*(x,t)  \{[\bphi_t^*(\hL_{\bF}^{n_T-j-m}(\cL_{\bF,\hAcc}^m \hDcc \cL_{\bF}^{j} \bg)\cdot \hbomega_\Gamma) ](\bF^{n_T}_*(\bv,0))\}(\bx) \\
&-\sum_{j=0}^{n_T-1}\sum_{m=1}^{\infty}\int_{\bR} dt \, \Psi_m(x,t) (\cL_{\bF,\hAcc}^{n_T-j} \hDcc\cL_{\bF}^{j} \bg)\circ \bphi_t (\bx)\\
&\hbomega_B=\langle (\hBcc, 0),(dx, dv)\rangle;\;\quad \hbomega_\Gamma=\langle (\Gamma(v), 0),(dx, dv)\rangle.
\end{split}
\end{equation}
Last, for each time dependent function $\vf\in L^\infty(\bR^3,\bR)$, each  form $\bgg$ on $\Omega$ and time dependent vector field  $w\in L^\infty(\bR^3, \bR^2)$, with compact support in $\bRp$, we define\footnote{ The first is just a slight generalization of \eqref{eq:molli}.}
\begin{equation}\label{eq:bH1-def}
\begin{split}
&\bH_{x,\vf}(\bg)=\int_{\bR}\vf(x,s)\cdot \bg\circ\bphi_s(x, \hV(x))\, ds\\
&\bH^1_{x,w}(\bgg)=\int_{\bR} \bgg_{\bphi_s(x,\hat V(x))}((D_x\phi_s w(x,s), 0)) ds.
\end{split}
\end{equation}
With such a notation  we can finally write \eqref{eq:deriv-second-0} as
\begin{equation}\label{eq:deriv-second}
\begin{split}
 \langle \bv (x),& \nabla \oH_{T}^{\star}(g)(x) \rangle 
= - \bH_{F^{n_T}(x),\vartheta_{n_T}} (\cL_{\bF}^{n_T}\bg)\\
&- \bH^1_{F^{n_T}(x),\chi_*\bF^{n_T}_*(\bv,0)}(\hL_{\bF}^{n_T}\bpi^*dg) \\
&+ \sum_{j=0}^{n_T-1}\bH^1_{F^{n_T}(x),\chi_*\bF^{n_T}_*(\bv,0)}(\hL_{\bF}^{n_T-j}((\cL_{\bF}^j \bg)\cdot \hbomega_B) ) \\
&-\sum_{j=0}^{n_T-1}\sum_{m=0}^{n_T-j} \bH^1_{F^{n_T}(x),\chi_*\bF^{n_T}_*(\bv,0)}(\hL_{\bF}^{n_T-j-m}(\cL_{\bF,\hAcc}^m \hDcc \cL_{\bF}^{j} \bg)\cdot \hbomega_\Gamma)  \\
&-\sum_{m=1}^{\infty}\sum_{j=0}^{n_T-1}\bH_{F^{n_T}(x),\Psi_m} (\cL_{\bF,\hAcc}^{n_T-j} \hDcc\cL_{\bF}^{j} \bg)=:\cJ_{n_T}(F^{n_T}(x)).
\end{split}
\end{equation}
It follows that, if $n_T=n_*K_T$, the derivative of \eqref{eq:newstart1} takes the form
\begin{equation}\label{eq:preliminary2}
\begin{split}
&\langle \bv, \nabla \overline H_{T}(g)\rangle=\sum_{l=1}^{K_T}\cJ_{ln_*}(F^{l n_*}(x))-\int_{\bRpe} \chi(s) [\phi_s^*dg(\bv)](x) ds.
\end{split}
\end{equation}
The above corresponds to Lemma \ref{lem:suffice} in the present context. 

\subsection{ Lipschitz coboundary}\ \newline\label{subsec:second-proof}
Having shown that the problem can be cast in a setting completely analogous to the one already discussed in Section \ref{sec:growth}, we are now ready to conclude.
\begin{proof}[{\bf Proof of Theorem \ref{thm:maintwo}}]
This is the same argument carried out in the proof of Theorem \ref{thm:main}, only now we also need the spectral picture for the operator $\cL_{\bF,\hAcc}$ on $\cB^{p,q}$ and of the operator $\widehat\cL_{\bF}$ acting on an appropriate (new) space $\widehat \cB^{p,q}$, $p+q\leq r-2$. 
Indeed, by the arguments in appendix \ref{sec:norms} it follows that $\cL_{\bF,\hAcc}$ is quasi compact on $\cB^{p,q}$ and in appendix \ref{sec:norms-bis} we show that there exists a Banach space $\widehat \cB^{p,q}$ on which $\widehat\cL_{\bF}$ has spectral radius $\rho>0$ and essential spectral radius bounded by $\lambda^{-\min\{p,q\}}\rho$. Also  the functionals $\bH^1_{x,w}$, for $w\in\cC^r_0$, are bounded by
\[
|\bH^1_{x,w}(\bgg)|\leq \Const\|w\|_{\cC^{r-2}}\|\bgg\|_{\widehat\cB^{p,q}}.
\]

As before we notice that in \eqref{eq:preliminary2} the last term is uniformly bounded, hence we have to worry only about the terms $\cJ_{ln_*}$. Looking at \eqref{eq:deriv-second} we see that each $\cJ_{ln_*}$ consists of five terms.

By Lemma \ref{lem:test-function} we see that the first term is bounded only if $p,q$ (and hence $r$) are large enough and $g$ belongs to the kernel of enough eigenprojectors of $\cL_{\bF}$ so that the essential spectral radius of $\cL_\bF$, when restricted to the invariant subspace to which $g$ belongs, is smaller than $\|\nu^u_1\|_\infty^{-n}$. Analogously, the second term is bounded if $\bpi^* dg$ belongs to the kernel of enough eigenprojectors of $\hL_{\bF}$ (again the spectral radius of the remainder must be smaller of $\|\nu^u_1\|_\infty^{-n}$). The third term has a bit more complex structure. First of all note that the multiplication by a smooth function is a bounded operator from $\cB^{p,q}$  to itself, while the multiplication by a smooth one-form is a bounded operator from $\cB^{p,q}$ to $\widehat\cB^{p,q}$ (to verify it just use the norms definitions \eqref{eq:def-norms} and \eqref{eq:def-norms-bis}). Next, assuming that $g$ belongs to the above subspaces and using the spectral decomposition of $\hL_{\bF}$ we can write,\footnote{ That is $\hL_{\bF}^j=\sum_{k=0}^{m_*}[ \rho_k \Pi_k +Q_k]^j+\cO(\|\nu^u_1\|_\infty^{-j})$, where $Q_k^{d_k}=0$ and $\Pi_k Q_k=Q_k\Pi_k=Q_k$ and $\Pi_k^2=\Pi_k$. Also we use the useful convention $Q_k^0=\Pi_k$.} for some $m_*\in\bN$, $\rho_*\in (0,1)$ and all $l\leq n_T$,
\[
\begin{split}
&\sum_{k=0}^{m_*}\sum_{j=0}^{l-1}\bH^1_{F^{n_T}(x),\chi_*\bF^{n_T}_*(\bv,0)}( [ \rho_k \Pi_k +Q_k]^{l-j}((\cL_{\bF}^j \bg)\cdot \hbomega_B) )+\cO(\rho_*^l)\\
&=\sum_{k=0}^{m_*}\sum_{j=0}^{l-1}\sum_{p=0}^{\min\{l-j,d_k-1\}}\binom{l-j}{p}\bH^1_{F^{n_T}(x),\chi_*\bF^{n_T}_*(\bv,0)}( \rho_k^{l-j-p} Q_k^{p}((\cL_{\bF}^{j} \bg)\cdot \hbomega_B) )\\
&\phantom{=}+\cO(\rho_*^l).
\end{split}
\]
On the other hand, for $p\leq l-j$,
\[
\begin{split}
&\sum_{j=0}^{l-1}\binom{l-j}{p}\bH^1_{F^{n_T}(x),\chi_*\bF^{n_T}_*(\bv,0)}( \rho_k^{l-j-p} Q_k^{p}((\cL_{\bF}^{j} \bg)\cdot \hbomega_B) )\\
&=\rho_k^{l-p}\sum_{s=0}^pc_{s,p}l^{p-s}\bH^1_{F^{n_T}(x),\chi_*\bF^{n_T}_*(\bv,0)}( Q_k^{p}(\cK_{s} \bg)\cdot \hbomega_B) )+\cO(\rho_*^l)
\end{split}
\]
where $\cK_{s}=\sum_{j=0}^{\infty}j^s(\rho_k^{-1}\cL_{\bF})^{j}$ and $\sum_{s=0}^pc_{s,p} l^{p-s}j^s=\binom{l-j}{p}$. By identifying the range of $\Pi_k$ with $\bR^{d_k}$ we can then define the functional $\ell_{k,s}: \cB^{p,q}\to\bR^{d_k}$ as $\ell_{k,s}(\bg)=\Pi_k( \cK_{s}(\bg)\cdot \hbomega_B)$. Accordingly, if $\ell_{k,s}(\bg)=0$ for all $k\leq m_*$ and $s\leq d_k$, we have that also the third term is uniformly bounded.

Similar arguments hold for the fourth and fifth term.
This implies that if $g$ belongs to and appropriate finite codimensional subspace (determined by the above eigendistributions) then the $\oH_{T}(g)$ are equicontinuous functions of $x$. Hence there exists $\{T_j\}$ such that $\oH_{T_j}(g)$ converges uniformly to a Lipschitz function. We have thus shown that $\oH_{T}(g)$ has a convergent subsequence to a Lipschitz function $h$ which, for all $t\in\bR$, satisfies
\[
h\circ\phi_t(x)-h(x)=\int_0^tg\circ \phi_s(x) ds
\]
Hence, $h$ satisfies \eqref{eq:homology} and $g$ is a Lipschitz coboundary.
\end{proof}

\section{Examples}\label{sec:examples}
Lemma \ref{lem:classify} shows that the flows to which our theory applies must necessarily enjoy several properties, the reader might be left wondering if such flows exist at all (apart, of course, for the trivial one consisting of rigid rotations).

To construct examples the simplest strategy is to reverse the logic and start with a $\cC^r$ Anosov map which is orientation preserving.\footnote{ After all, this is what is done for the horocycle flow: one starts from the geodesic flow.} Given such a map, we have an associated stable distribution. If we choose any strictly positive function $\cN\in\cC^r(\bT^2,\bR)$ there are only two fields $V$ such that $V(x)\in E^s(x)$ for all $x\in\bT^2$ and $\|V(x)\|=\cN(x)$, they correspond to the two possible orientations. We can then choose any of the two and we have, at the same time, an example that satisfies all our assumptions and a justification of such assumptions.
Indeed, in general the distribution $E^s$ of a $\cC^r$ Anosov map will be only $\cC^{1+\alpha}$ with $\alpha\in(0,1)$, \cite{KH}. Notice however that it is possible to have situations in which $\alpha>1$ and yet $F$ is not $\cC^1$ conjugated to a toral automorphism \cite[Exercise  19.1.5]{KH}. Of course, in the latter case the unstable foliation will be irregular \cite[Corollary 3.3]{ghys93}.

The above partially clarifies the applicability of our work. Nevertheless, other reasons of unhappiness persist. In particular all our discussion, up to this point, has been a bit abstract as we did not discuss which type of concrete objects it really yields. To gain a better understanding we start by working out the linear case that, surprisingly, is not completely trivial.

\subsection{A ``trivial" example}\label{rem:Fouriercomputation}\ \newline
For the reader convenience we discuss here the case in which $F$ is linear and $\phi_t$ is generated by a constant vector field. As already mentioned in the introduction this is an analogue of the case, for the geodesic-horocycle flow setting, of compact manifolds of constant negative curvature. Hence it can be dealt with directly by representation theory (i.e. Fourier transform, in the present setting), without using the strategy put forward in this paper.

Let $A\in SL(2,\bN)$; let $F_A:\bT^2 \to \bT^2$ be the Anosov map defined by $F_A(\xi) := A\xi\mod 1.$
Since $\det(A) = 1 $ the map is invertible, and has eigenvalues $\lambda_A, \lambda^{-1}_A \in \bR $, with  $\lambda_A > 1 $. Let $V_A=(1,\omega)$  be the eigenvector associated to the eigenvalue $\lambda_A^{-1}$. Note that $\omega$ is a quadratic irrational, as in Lemma \ref{lem:classify}. Let $\phi_t(\xi) = \xi + tV_A \mod 1 $. In this case by applying Fourier transform to equation \eqref{eq:homology} we obtain, for $k \in \bZ^2$, calling $ \hat{f}_k $ the Fourier coefficients of $f$,
\[ 
\sum_{k \in \bZ^2}  2\pi i\langle V , k  \rangle   \hat{h}_k  e^{2\pi i\langle k, \xi \rangle}    = 
  \sum_{k \in \bZ^2} \hat{g}_k e^{2\pi i\langle k, \xi \rangle}.
 \]
Note that we have the trivial obstruction $\hat g_0=0$. If this is satisfied, note that $\langle V, k \rangle=k_1+\omega k_2\neq 0$ for all $(k_1,k_2)\in\bZ^2\setminus \{0\}$, since $\omega$ is irrational. Thus, we can write
\begin{equation}\label{eq:cohomology}
 \hat{h}(k) = - i\frac{\hat{g}(k)}{ 2\pi\langle V, k \rangle}.
\end{equation}
Since $\omega$ is a quadratic irrational, it is well known (e.g. by using standard results on continuous fractions) that $|\langle V, k \rangle|\geq \Const \|k\|^{-1}$. Hence, if $g\in W^{r,2}$ (the Sobolev space with the first $r$ derivatives in $L^2$), then $h\in W^{r-1,2}$. In particular, if $g\in\cC^\infty$, then  $h\in\cC^\infty$. 

That is, in this example all the aforementioned distributions do not exist and the only obstruction is the trivial one: the one given by the invariant measure. If such an obstruction is satisfied (i.e. if $\operatorname{Leb}(g)=0$), then the ergodic integrals are bounded and $g$ is a coboundary with the maximal regularity one can expect.

Nevertheless, it is very instructive to apply to this example also our strategy. This will give us a feeling for what might happen in general.  

\subsubsection{\bfseries{Ergodic integrals}}\label{sec:ergo}\ \newline 
 Let us change coordinates in $\Omega$.
One convenient choice is $\theta(\xi,s)=(\xi,v(s))$ with $v(s)=(1, s)(1+s^2)^{-\frac 12}$ and $s<0$. In this co-ordinates we have that the set $\Omega$, defined just before \eqref{eq:inv-stable-vec}, reads $\bT^2\times [-\beta_1,-\beta_2]$ for some $0<\beta_2<\beta_1$. Also, calling $\widehat\bF=\theta^{-1}\circ \bF\circ \theta$, we have 
\[
\begin{split}
&\widehat \bF(\xi,s)=(F(\xi), \psi(s))\\
&\psi(s)=\frac {c+sd}{a+sb}\;;\quad A=\begin{pmatrix} a&b\\c&d\end{pmatrix}.
\end{split}
\]
In addition,
\begin{equation}\label{eq:psinv}
\psi^{-1}(s)=\frac{as-c}{d-bs}.
\end{equation}
The map $\psi^{-1}$ is a contracting map with derivative $(\psi^{-1})'(s)=(d-bs)^{-2}$ and a unique fixed point $\bar s$ in $[-\beta_1,-\beta_2]$. 

The smallest eigenvalue of $A$ is given by $\bar\nu=(d-\bar sb)^{-1}<1$ and the corresponding eigenvector is  $\bar v=(1, \bar s)$. 

Setting, as usual, $\theta^*\bg=\bg\circ \theta$, for $\bg\in\cC^0(\Omega,\bC)$, and introducing the multiplication operator $\Xi\, \bg(\xi,s)=\frac{\sqrt{1+s^2}}{\|V(\xi)\|}\bg(\xi,s)$, let us define, recalling that $\cL_\bF$ is defined in equation \eqref{eq:new-to},
\begin{equation}\label{eq:tra-con}
\cL_{\widehat \bF}=\Xi\, \theta^*\cL_\bF (\theta^*)^{-1}\,\Xi^{-1}.
\end{equation}
By direct computation it follows\footnote{ Remember that $s<0$, hence $d-sb>0$.}
\begin{equation}\label{eq:linear-basic}
\cL_{\widehat\bF}\, \bg(\xi,s)=\bg\circ \widehat\bF^{-1}(\xi,s) (d-bs).
\end{equation}
By Theorem \ref{thm:main} the obstruction are determined by the spectrum of $\cL_\bF$ on the Banach spaces $\cB^{p,q}$ (see Appendix \ref{sec:norms}). 
Note that \eqref{eq:tra-con} implies that the operator $\cL_\bF$ is conjugated to the operator $\cL_{\widehat\bF}$ on the space $\Xi\, \theta^*(\cB^{p,q})$ which, to simplify notation, we still call $\cB^{p,q}$ since the identification is trivial; hence they have the same spectrum.
Accordingly, it suffices to study the spectrum of $\cL_{\widehat\bF}$. We detail it in the next Lemma; see Remark \ref{rem:useless-eig} for the consequences of the Lemma on the existence of the obstructions.
\begin{lem}\label{lem:spectrum-final}
For each $q\geq p\in\bN$, setting $D_{p}= \{z\in\bC\;:\;|z|> \bar\nu^{p-1}\}$,  
\[
\sigma_{\cB^{p,q}}(\cL_{\widehat\bF})\cap D_{p}= \{\bar\nu^{2k-1}\}_{k\in \bN}\cap D_{p},
\]
and the eigenvalues have multiplicity one. 
In addition, there exist explicit formulae for the right and left eigenvectors associated to the point spectrum (see equations \eqref{eq:f-product}, \eqref{eq:f-product2} and \eqref{eq:kernels}).
\end{lem}
\begin{proof}
It is convenient to consider a larger Banach space $\cB^{p,q}_*$ which is defined exactly as $\cB^{p,q}$ with the only difference that the set $\Sigma$ of admissible curves (see Definition \ref{def:leaves}) is smaller, as it consists only of stable curves (that, in this case, are just segments in the direction $\hat V$). Indeed, since the sup is taken on a smaller set, it follows $\|\cdot\|_{\cB^{p,q}_*}\leq \|\cdot\|_{\cB^{p,q}}$ which implies $\cB^{p,q}\subset \cB^{p,q}_*$. In addition, note that if $\cL$ is a bounded operator both on $\cB^{p,q}$ and $\cB^{p,q}_*$ and $R\subset\bC$ is a region in which both $\sigma_{\cB^{p,q}_*}(\cL)$ and $\sigma_{\cB^{p,q}}(\cL)$ consist only of point spectrum with finite multiplicity, then $\sigma_{\cB^{p,q}}(\cL)\cap R\subset \sigma_{\cB^{p,q}_*}(\cL)\cap R$. Indeed, if $h\in \cB^{p,q}$ is an eigenvector of $\cL$, then it is an eigenvector of $\cL$ also when it is viewed as an operator from $\cB^{p,q}_*$ to itself. In fact, \cite[Lemma A.1]{BaladiTsujii08} implies  $\sigma_{\cB^{p,q}}(\cL)\cap R= \sigma_{\cB^{p,q}_*}(\cL)\cap R$.

The advantage in considering $\cB^{p,q}_*$ rests in the fact that the norm (and all the theory) simplifies considerably since the stable and unstable distributions are constant.\footnote{ If the reader wonders why not always use such a space, he should consider that such a space is directly tied to the map and does not work for a small perturbation, as the invariant distributions will be different. Hence the spaces $\cB^{p,q}_*$ would be useless in the following sections.}  This implies in particular that $\partial_\sigma g=\langle\nabla_{x,y} g, \hat V\rangle$, $\partial_ug=\langle\nabla_{x,y} g, \hat V^\perp\rangle$ and $\partial_s$ all commute.\footnote{ Of course, $\partial_\sigma$ is just the derivative in the stable direction and $\partial_u$ the one in the unstable direction of $F$ and $\partial_s$ is the derivative with respect to $s$.} 

Setting $B_{\beta, r}=\{\vf\in \cC^{\beta}_0([-r,r],\bC)\;:\; \|\vf\|_{\cC^{\beta}([-r,r],\bC)}\leq 1\}$, the analogous of definition \eqref{eq:def-norms} is equivalent to
\[
\|g\|_{\cB^{p,q}_*}=\sup_{\substack {x\in \bT^2\\r\in[\delta/2,\delta]}}\sup_{\substack{p_\sigma+p_u+p_s=p_*\\
p_*\leq p}}\sup_{\vf\in B_{q+p_*,r}}
\left|\int_{-r}^r\left[\partial_\sigma^{p_\sigma}\partial_u^{p_u}\partial_s^{p_s} g\right](x+\hat V t, \bar s) \vf(t) dt\right|,
\]
where, for further convenience, we make the choice $\delta\leq \bar\nu$.
In addition, since  $\vf$ is compactly supported in $[-r,r]$,
\[ 
\int_{-r}^r\left[\partial_\sigma g\right](x+\hat V t, \bar s) \vf(t) dt=\int_{-r}^r\frac{d}{dt} g(x+\hat V t, \bar s) \vf(t) dt=-\int_{-r}^rg (x+\hat V t, \bar s) \vf'(t) dt.
\]
Hence, we can write
\[
\|g\|_{\cB^{p,q}_*}=\sup_{x\in \bT^2}\sup_{r\in[\delta/2,\delta]}\;\sup_{p_u+p_s=p_*\leq p}\;\sup_{\vf\in B_{q+p_*,r}}
\left|\int_{-r}^r\left[\partial_u^{p_u}\partial_s^{p_s} g\right](x+\hat V t, \bar s) \vf(t) dt\right|.
\]
Next, we establish a Lasota-Yorke inequality by a super simplified version of the arguments used in \cite[Lemma2.2]{GouezelLiverani06}.
\begin{sublem}\label{sublem:lasota-easy}
For all $p,q>0$ there exist $C>0$ such that, for all $n\in\bN$,
\[
\begin{split}
&\|\cL_{\widehat\bF}^n h\|_{\cB^{p,q}_*}\leq C\bar\nu^{-n}\|h\|_{\cB^{p,q}_*}\\
&\|\cL_{\widehat\bF}^n h\|_{\cB^{p,q}_*}\leq C\bar\nu^{n(\min\{p,q\}-1)}\|h\|_{\cB^{p,q}_*}+C\bar \nu^{-n}\|h\|_{\cB^{p-1,q+1}_*}.
\end{split}
\]
\end{sublem}
\begin{proof}
Note that $\partial_u\cL_{\widehat\bF} h=\bar\nu \cL_{\widehat\bF} \partial_u h$ and 
\begin{equation}\label{eq:deriv-s}
\partial_s\cL_{\widehat\bF} h=(d-bs)^{-2}\cL_{\widehat\bF} \partial_s h-b(d-bs)^{-1}\cL_{\widehat\bF} h .
\end{equation} 
Iterating the above equations yield, for each $p_u+p_s=p_*\leq p$, $\vf\in B_{q+p_*,r}$,
\[
\begin{split}
&\left|\int_{-r}^r\left[\partial_u^{p_u}\partial_s^{p_s} \cL_{\widehat\bF}^ng\right](x+\hat V t, \bar s) \vf(t) dt\right|
\leq \bar\nu^{(2p_s+p_u)n}\left|\int_{-r}^r\left[ \cL_{\widehat\bF}^n\partial_u^{p_u}\partial_s^{p_s}g\right](x+\hat V t, \bar s) \vf(t) dt\right|\\
&+C_{p}\sum_{0\leq p'_s\leq p_*-1-p_u}\left|\int_{-r}^r\left[ \cL_{\widehat\bF}^n\partial_u^{p_u}\partial_s^{p'_s}g\right](x+\hat V t, \bar s) \vf(t) dt\right|.
\end{split}
\]
On the other hand
\[
\int_{-r}^r\left[ \cL_{\widehat\bF}^nh\right](x+\hat V t, \bar s) \vf(t) dt=\int_{-\bar\nu^{-n}r}^{\bar\nu^{-n}r} h(x+\hat V t, \bar s) \vf(\bar\nu^n t) dt.
\]
To compare the above with the integrals defining the norm we introduce a smooth partition of unity $\{\phi_k\}$ of the form $\phi_k(t)=\phi(t-k\delta/2)$, $\supp\phi\subset [-\delta,\delta]$, and write $\vf(\bar\nu^n t)=\sum_k \vf_{k,n}(t-k\delta/2)$ where $\vf_{k,n}(t)=\phi(t)\vf(\bar\nu^{n}(t+k\delta/2))$. Then, setting $x_k=x+k\delta/2 \hat V$,
\[
\int_{-r}^r\left[ \cL_{\widehat\bF}^nh\right](x+\hat V t, \bar s) \vf(t) dt=\sum_k\int_{-\delta}^{\delta} h(x_k+\hat V t, \bar s) \vf_{k,n}(t) dt.
\]
Note that the sum consists of at most $\Const\bar\nu^{-n}$ terms.
This yields the first estimate of the Sub-Lemma since $\|\vf_{k,n}\|_{\cC^{q+p_*}}\leq \Const$. Moreover we have
\[
\begin{split}
\left|\int_{-r}^r\left[\partial_u^{p_u}\partial_s^{p_s} \cL_{\widehat\bF}^ng\right](x+\hat V t, \bar s) \vf(t) dt\right|
\leq& \bar\nu^{p_*n}\sum_k\left|\int_{-\delta}^\delta\left[ \partial_u^{p_u}\partial_s^{p_s}g\right](x_k+\hat V t, \bar s) \vf_{k,n}(t) dt\right|\\
&+C_{p}\bar\nu^{-n}\|g\|_{\cB^{p-1,q+1}_*}.
\end{split}
\]
If $p_*=p$ then
\[
\begin{split}
\left|\int_{-r}^r\left[\partial_u^{p_u}\partial_s^{p_s} \cL_{\widehat\bF}^ng\right](x+\hat V t, \bar s) \vf(t) dt\right|
\leq C_q \bar\nu^{(p-1)n}\|g\|_{\cB^{p,q}_*}+C_{p}\bar\nu^{-n}\|g\|_{\cB^{p-1,q+1}_*},
\end{split}
\] 
 If $p_*<p$, recalling the choice $\delta\leq \bar\nu$,
\[
\begin{split}
&\left\|\vf_{k,n}(t)-\phi(t)\sum_{j=0}^{q-1}\frac{\vf^{(j)}(\bar\nu^{n}k\delta/2)\bar\nu^j}{j!}t^j\right\|_{\cC^{q+p_*}}\leq C_q\bar\nu^q\\
&\left\|\phi(t)\sum_{j=0}^{q-1}\frac{\vf^{(j)}(\bar\nu^{n}k\delta/2)\bar\nu^j}{j!}t^j\right\|_{\cC^{q+p_*+1}}\leq C_q.
\end{split}
\]
Thus
\[
\begin{split}
\left|\int_{-r}^r\left[\partial_u^{p_u}\partial_s^{p_s} \cL_{\widehat\bF}^ng\right](x+\hat V t, \bar s) \vf(t) dt\right|
\leq&C_q \bar\nu^{(p_*-1)n}\bar\nu^{qn}\|g\|_{\cB^{p^*,q}_*}+C_{p}\bar\nu^{-n}\|g\|_{\cB^{p-1,q+1}_*}\\
\leq&C_q \bar\nu^{(q-1)n}\|g\|_{\cB^{p,q}_*}+C_{p}\bar\nu^{-n}\|g\|_{\cB^{p-1,q+1}_*},
\end{split}
\]
from which the second inequality in the statement of the Lemma follows by taking the sup on $x$, $p_u,p_s$ and $\vf$.
\end{proof}
\begin{sublem}\label{sublem:essential0}
For each $q, p\in\bN$ the essential spectral radius of $\cL_{\widehat\bF}$ acting on $\cB^{p,q}_*$ is bounded by $\bar\nu^{\min\{p,q\}-1}$.
\end{sublem}
\begin{proof}
One can check that the unit ball of $\cB^{p,q}_*$ is weakly compact in $\cB^{p-1,q+1}_*$ by a much simplified version of the proof of \cite[Lemma 2.9]{Liverani04}. Then the Lemma follows by the Lasota-Yorke inequality established in Sub-Lemma \ref{sublem:lasota-easy} and Hennion theorem \cite{Hennion93}.
\end{proof}
We can now study the point spectrum. 
We start by establishing a lower bound on its cardinality.
\begin{sublem}\label{lem:discrete-sp}
For each $p,q\in \bN$, $q, p\in\bN$, we have
\[
\sigma_{\cB^{p,q}_*}(\cL_{\widehat\bF})\supset \{\bar\nu^{2k-1}\}_{k\geq 0}.
\]
\end{sublem}
\begin{proof}
We look for $\cL_{\widehat\bF} \bg=\mu\bg$ of the form $\bg(\xi,s)=g(\xi)f(s)$. Then
\[
\mu g(\xi)f(s)=g\circ F^{-1}(\xi) f(\psi^{-1}(s)) (d-bs).
\]
By the computation Fourier modes done at the beginning of section \ref{rem:Fouriercomputation} it follows that $g(\xi)=1$; hence
\[
\mu f(s)= f(\psi^{-1}(s)) (d-bs).
\]
Iterating the above relation yields
\[
f(s)=\mu^{-n}\prod_{k=0}^{n-1}(d-b\psi^{-k}(s)) f(\psi^{-n}(s)).
\]
Since $\psi^{-n}(s)$ converges to $\bar s$ there are two possibilities; either $f(\bar s)\neq 0$ or $f(\bar s)=0$. In the first case we can assume, without loss of generality, that $f(\bar s)=1$. Then
\begin{equation}\label{eq:f-product}
f(s)=\prod_{k=0}^{\infty}\mu^{-1}(d-b\psi^{-k}(s))=\prod_{k=0}^{\infty}\mu^{-1}(d-b\bar s+b[\psi^{-k}(\bar s)-\psi^{-k}(s)]),
\end{equation}
provided the product converges. If we choose $\mu=d-b\bar s=\bar\nu^{-1}$, then, for any $\tau\in( \bar \nu^2,1)$ we have
\[
\prod_{k=0}^{\infty}\mu^{-1}(\mu+b[\psi^{-k}(\bar s)-\psi^{-k}(s)])=e^{\sum_{k=0}^\infty \cO(\tau^k)}
\]
which shows the convergence. Since $f \in\cC^\infty$, we have that $f\in \cB^{p,q}_*$ for all $p,q\in\bN$. We have thus an eigenvector.

Next, consider the case $f(\bar s)=0$. It is natural to
look for solutions of the form $f(s)=(s-\bar s)^p M(s)$, $M(\bar s)=1$, for $p\in\bN$, then, recalling \eqref{eq:psinv},
\[
\begin{split}
\mu(s-\bar s)^pM(s)&=(d-bs)(\psi^{-1}(s)-\psi^{-1}(\bar s))^pM\circ \psi^{-1}(s)\\
&=(d-bs)^{-p+1}(d-b\bar s)^{-p}(s-\bar s)^pM\circ \psi^{-1}(s).
\end{split}
\]
Iterating again we have
\begin{equation}\label{eq:f-product2}
M(s)=\prod_{k=0}^\infty\mu^{-1}(\bar\nu^{-1}+b(\psi^{-k}(\bar s)-\psi^{-k}(s)))^{-p+1}\bar\nu^{p}.
\end{equation}
The above is convergent and non zero provided $\mu=\bar\nu^{2p-1}$. Such a choice yields $M(s)=f(s)^{-p+1}$ and since $1/f\in\cC^\infty$ again we have an eigenvector in $\cB^{p,q}_*$.
\end{proof}
Next, we need a lower bound on the cardinality of the point spectrum.
The idea to achieve it is to investigate the spectrum of $\cL_{\widehat\bF}'$ by considering the distributions
\begin{equation}\label{eq:kernels}
\delta^n_*(h)=\int_{\bT^2} [\partial_s^nh](x,\bar s) dx.
\end{equation}
Note that, for $q\geq p\geq n$, $\delta^*_n\in (\cB^{p,q}_*)'$. Indeed, $\bT^2$ can be partitioned by pieces of stable curves, hence $\delta^n_*$ can be seen as an convex combination of the functionals that define the $\cB^{p,q}_*$ norm, thus it is bounded.
\begin{lem}\label{lem:invariant-p}
The spaces $\bW_n=\{h\in \cB^{p,q}_*\;:\; \delta^0_*(h)=\dots=\delta^n_*(h)=0\}$ are invariant  for $\cL_{\widehat\bF}$.
\end{lem}
\begin{proof}
Note that
\[
\delta^0_*(\cL_{\widehat\bF}g)=\int_{\bT^{2}}g(F^{-1}(x),\bar s)\bar\nu^{-1}dx=\bar\nu^{-1}\int_{\bT^{2}}g(x,\bar s)dx=\bar\nu^{-1}\delta^0_*(g).
\]
Moreover, iterating \eqref{eq:deriv-s} it follows that there exist smooth functions $a_k$ such that
\[
\partial_s^n\cL_{\widehat\bF}g=\sum_{k=0}^n a_k(s)\cL_{\widehat\bF}\partial_s^k g.
\]
Thus,
\[
\delta^n_*(\cL_{\widehat\bF}g)=\sum_{k=0}^{n} a_k(\bar s)\int_{\bT^{2}}[\partial_s^kg](F^{-1}(x),\bar s)\bar\nu^{-1}=\sum_{k=0}^n a_k(\bar s)\bar\nu^{-1}\delta^k_*(g).
\]
\end{proof}
Note that the above implies that $\cL_{\widehat\bF}'$ has eigenvectors formed by linear combinations of $\delta^n_*$.
\begin{sublem}\label{sublem:small-ess}
For each $m\geq 0$, the spectral radius of $\cL_{\widehat\bF}$, when acting on $\bW_{m-1}\cap \cB^{p,q}_*$, $p,q\geq 2m$, is bounded by $\bar\nu^{2m-1}$.
\end{sublem}
\begin{proof}
Setting $A(x,s)=(d-bs)^{-1}$ and $\cL_{\widehat\bF,k}g=A^{2k}\cL_{\widehat\bF}g$, by equation \eqref{eq:deriv-s}
\[
\begin{split}
\partial_s\cL_{\widehat\bF,m}^ng&=(d-bs)^{-2}\cL_{\widehat\bF,m}\partial_s\cL_{\widehat\bF,m}^{n-1} g+b(2m-1)(d-bs)^{-1}\cL_{\widehat\bF,m}^n g\\
&=\cL_{\widehat\bF,m+1}^n\partial_s g+b(2m-1)\sum_{k=0}^{n-1}\cL_{\widehat\bF,m+1}^k A \cL_{\widehat\bF,m}^{n-k}g.
\end{split}
\]
Iterating the above formula yields
\[
\begin{split}
\left|\int_{-r}^r\left[\partial_u^j\partial_s^k(\cL_{\widehat\bF}^nh)\right](x+\hat V t) dt\right|
&\leq \bar\nu^{(2k-1)n}\left|\int_{-r}^r \left[\partial_u^j\cL^nh_{\bar s,k})\right](x+\hat V t) dt\right|\\
&\phantom{\leq}+C_k\sum_{l=0}^{k-1}\bar\nu^{(2l-1)n}\left|\int_{-r}^r \left[\partial_u^j\cL^nh_{\bar s,l})\right](x+\hat V t) dt\right|
\end{split}
\]
where $h_{\bar s,k}(x)=(\partial_s^k h)(x,\bar s)$ and $\cL g(x)=g\circ F^{-1}(x)$ is the transfer operator associated to the SRB measure for $F$ (which, of course, it is just Lebesgue).
It is then natural to consider the Banach spaces $\cB_0^{p,q}$ obtained by closing $\cC^\infty(\bT^2,\bC)$ with respect to the norms
\[
\|g\|_{\cB^{p,q}_0}=\sup_{\substack {x\in \bT^2\\r\in[\delta/2,\delta]}}\;\sup_{p_u\leq p}\;\sup_{\|\vf\|_{\cC_0^{q+p_u}}\leq 1}
\left|\int_{-r}^r\left[\partial_u^{p_u} g\right](x+\hat V t) \vf(t) dt\right|.
\]
Note that $\|h_{\bar s,k}\|_{\cB^{p,q}_0}\leq \|h\|_{\cB^{p,q}_*}$ if $k\leq p$.
Then
\begin{equation}\label{eq:herewego}
\|\cL_{\widehat\bF}^n h\|_{\cB^{p,q}_*}\leq \bar\nu^{(2p-1)n}\|\cL^n h_{\bar s,p}\|_{\cB^{p,q}_0}+C_p\sum_{j=0}^{p-1}\bar\nu^{(2j-1)n}\|\cL^n h_{\bar s,j}\|_{\cB^{p,q}_0}.
\end{equation}
It is well known that the point spectrum of the operator $\cL$ on each $\cB^{p,q}_0$ consists of just $\{1\}$, with the left eigenvector being Lebesgue measure, while the essential spectrum is bounded by $\bar\nu^{\min\{p,q\}}$.\footnote{ The first assertion follows by noticing that the decay of correlations is super exponential for analytic functions (just compute in Fourier transform), the second can be seen by the same arguments (actually, even simpler) we used for the spectrum of $\cL_{\widehat\bF}$.}

The above readily implies that if $h\in \bW_0$, then, by \eqref{eq:kernels}, $h_{\bar s,0}$ is of zero average and, for each $\beta>\bar\nu$ and $q\geq 2$,
\[
\|\cL_{\widehat\bF}^n h\|_{\cB^{2,q}_*}\leq C_\beta \bar\nu^{n}\beta^{n}\| h_{\bar s,0}\|_{\cB^{2,q}_0}\leq C_\beta \bar\nu^{n}\beta^{n}\| h\|_{\cB^{2,q}_*}.
\]
Thus the spectral radius of $\cL_{\widehat\bF}$  on $\bW_0$ must be less than $\bar\nu$, and since $\bW_0$ has codimension one it follows that $\cL_{\widehat\bF}$ can have at most one eigenvalue larger than $\bar \nu$. By our knowledge of the essential spectral radius and  \cite[Lemma A.1]{BaladiTsujii08} it follows that on each $\cB^{p,q}_*$, $p,q\geq 1$, the operator can have at most one eigenvalue larger that $\bar \nu$. 

To bound the spectral radius of $\cL_{\widehat\bF}$, when acting on $\bW_{m-1}\cap \cB^{p,q}_*$, $p,q\geq 2m$, we proceed by induction. We already proved the statement for $m=1$. Suppose it is true for $m$, then, for $q\geq 2m+2$, equation \eqref{eq:herewego} implies, for $h\in\bW_m$ and $\beta>\bar\nu$,
\[
\|\cL_{\widehat\bF}^n h\|_{\cB^{2m+2,q}_*}\leq C_\beta \beta^{(2m+1)n}\sum_{j=0}^{2m+2}\| h_{\bar s,j}\|_{\cB^{2m+2,q}_0},
\]
which proves the claim. 
\end{proof}
We are finally ready to conclude: by Sub-Lemma \ref{sublem:essential0} and Proposition \ref{th:base} we know that the essential spectrum of $\cL_{\widehat\bF}$ acting on $\cB^{p,q}_*$ and on $\cB^{p,q}$, $p\geq q$, are both bounded by $\bar\nu^{p-1}$. Hence \cite[Lemma A.1]{BaladiTsujii08} implies that $\sigma_{\cB^{p,q}_*}(\cL_{\widehat\bF})\cap D_p=\sigma_{\cB^{p,q}}(\cL_{\widehat\bF})\cap D_p$. On the other hand Sub-Lemma \ref{sublem:small-ess} implies that there can be at most $m$ eigenvalues larger than $\bar\nu^{2m-1}$, provided $p> 2m$. Finally Sub-Lemma \ref{lem:discrete-sp} shows that are at least $m$ eigenvalues, implying the Lemma.
\end{proof}
\begin{rem}\label{rem:useless-eig}
Note that, as remarked just before Theorem \ref{thm:main}, the projection $\bpi_*$ is not one-one. In the present case $O_k=\bpi_*{\bell}_k$ are all zero apart when $k=0$ in which case it is the invariant measure of $\phi_t$ (see equation \eqref{eq:kernels}). Hence as we already saw, there are no non trivial obstructions. It is however interesting that this does not imply that the spectrum of $\cL_\bF'$ consists only of zero and $e^{\htop}$.
\end{rem}
\subsubsection{\bfseries Cohomological equation}\label{subsec:cob}\ \newline 
To study the regularity of the coboundary we consider separately the various contributions in equation \eqref{eq:deriv-second}. The first contribution is of the same type of the previous section, only now the test function is much bigger, however we have seen that if $g$ is of zero average, then this term is exponentially small.

To analyze the second contribution we must analyze the operator $\widehat\cL_{\bF} $ defined in \eqref{eq:new-new}. Doing the same type of conjugation than before we can reduce the problem to studying the operator $\widehat\cL_{\widehat\bF} =\Xi\,\theta^*\widehat \cL_{\bF}\,(\theta^*)^{-1}\Xi^{-1}$.\footnote{ Of course, now $\theta^*$ is not the composition operator but rather the pushforward  of one forms while $\Xi$ is a again a multiplication operator but now acting on forms.} A direct computation shows that
\begin{equation}\label{eq:new-new-lin} 
\left[\widehat\cL_{\widehat\bF} \bgg\right]_{(\xi,s)}= (d-bs) \left[(\widehat\bF^{-1})^*\bgg\right]_{(\xi,s)}.
\end{equation}
In this coordinates a one form reads $\bgg=\langle\balpha,  d\xi\rangle+\bbeta ds$. Thus we can identify one forms with vector functions $(\balpha, \bbeta): \bT^2\times [-\beta_1,-\beta_2]\to\bR^3$. If we do such an identification, another direct computation shows that we are reduced to studying the operator
\[
\left[\,\widehat\cL(\balpha, \bbeta)\right](\xi,s)=(d-bs)((D F^{-1})^*\balpha\circ \widehat \bF^{-1}(\xi,s), (d-bs)^{-2}\bbeta\circ \widehat \bF^{-1}(\xi,s)).
\]
In addition, we are interested only in forms such that $\bbeta\equiv 0$ (see \eqref{eq:deriv-second-0}). We are thus reduced to study the transfer operator
\begin{equation}\label{eq:heritis}
\left[\,\cL\balpha\right](\xi,s)=(d-bs) A^{-1}\balpha\circ \widehat \bF^{-1}(\xi,s).
\end{equation}
Since we can write $\balpha(\xi,s)=p_+(\xi,s)v^++p_-(\xi,s)v^-$, it follows that the eigenvalues must be of the form $\balpha(\xi,s)= p(\xi,s) v^{\pm}$ where $v^{-}=V$, $v^+=V^\perp$ are the eigenvectors of $A$. Calling $\lambda_\pm$ the corresponding eigenvalues we have that the eigenvalues of $\cL$ must be eigenvalues of
\[
\cL_\pm p=(d-bs) \lambda^{-1}_\pm p\circ \widehat \bF^{-1}.
\]
The above are simply multiples of the operator $\cL_{\widehat\bF}$ defined in \eqref{eq:linear-basic}. Note that $\lambda_+^{-1}=\lambda_-=\bar \nu<1$.
Accordingly, Lemma \ref{lem:spectrum-final} implies that, outside the essential spectrum, we have $\sigma\!\left(\widehat\cL_{\widehat\bF}\right)=\left\{\bar \nu^{2k-2}\right\}_{k\in\bN}$, where all the eigenvalues have multiplicity two, apart form the largest one which is simple. The eigenvalues that are possibly relevant are the three larger or equal to $\bar\nu$, however the projection of the corresponding eigendistributions that are not identically zero, when applied to $\nabla g$, yield $\operatorname{Leb}(\partial_{v^-} (\|V\|^{-1}g))=\operatorname{Leb}(\partial_{v^+}  (\|V\|^{-1}g))=0$. Hence, also this term is uniformly bounded.
To analyze the other terms note that $\partial_xDF=0=\partial_x\|V\|$, hence $\Gamma=p_0=p^*_{0}=B=E=0$ while $A=\bar\nu^2$, hence the remaining terms are identically zero.

It follows that no nontrivial obstructions exists and, if $\operatorname{Leb}(g)=0$, then $g$ is a Lipschitz coboundary (as we already knew from the simple Fourier transform computation).

\subsection{Some considerations on the general case}\ \newline\label{sec:non-lin-ex}
Given the previous discussion, a natural question is if there are or not cases in which non trivial obstructions exist. This is a difficult question to answer, here we content ourselves with the discussion of small perturbations of the linear case. See Remark \ref{rem:conjecture} for considerations on the non perturbative case.

We will see that small perturbations of the linear case do not have obstructions to non trivial growth of ergodic integrals or to $\cC^{\frac 12-\epsilon}$ coboundary. Hence, for a small enough perturbation, an ergodic integral either grows linearly in time or, if the function is of zero average, the function is, at least, a $\cC^{\frac 12-\epsilon}$ coboundary (see Lemma \ref{lem:fuck} and Corollary \ref{cor:cob}).

\subsubsection{\bfseries A one-parameter family of examples}\label{sec:modelp}\ \newline 
Let us be more concrete: consider the symplectic maps studied in \cite{LM, Li04}:
\begin{equation}\label{eq:family}
F_\param(x,y)=\left(2x+y-\param\vf(x), x+y-\param\vf(x)\right),
\end{equation}
where  $\vf\in\cC^\infty(\bT,\bR)$, $\int_{\bT}\vf=0$, and $\param\geq 0$. For example one can choose $\vf(x)=\frac{1}{2\pi}\sin2\pi x$.
Also choose $\|V\|=1$ and call $V_\param$ the vector field.
Note that $F_0$ is the linear total automorphism discussed in the previous section. 

Using the same co-ordinates as in the previous section we can reduce ourselves to the study of the map
\begin{equation}\label{eq:alfamap}
\begin{split}
&\widehat\bF_\param (x,y,s)=(F_\param(x,y), \psi_\param(x,s)))\\
&\psi_\param(x,s)=\frac {1-\param\vf'(x)+s}{2-\param\vf'( x)+s},
\end{split}
\end{equation}
and of the transfer operator $\cL_{\widehat \bF_\param}=\Xi\, \theta^*\cL_{\bF_\param} (\theta^*)^{-1}\,\Xi^{-1}$ which reads
\begin{equation}\label{eq:L-reduced}
\cL_{\widehat\bF_\param} \bg(x,y,s)= (1-s)\,\bg\circ \widehat\bF_\param^{-1}(x,y,s).
\end{equation}
For small $\param$ the above operator is a perturbation of $\cL_{\widehat\bF_0}$, the spectrum of which we have computed. So by the perturbation theory in \cite{KellerLiverani99} it will have eigenvalues close to the ones of $\cL_{\widehat\bF_0}$. 
Hence, the second eigenvalue of $\cL_{\bF_\param}$ will be close to $\bar \nu<1$, thus it will not have any influence on the growth of ergodic integrals. Accordingly, for small $\param$ it persists the conclusion that either a function has non zero average, and hence the ergodic integral grows like $t$, or it has zero average and then it is a continuous coboundary. This is a bit disappointing, yet it does provide non trivial information on the flow.

\begin{rem}\label{rem:conjecture}
Note that, for $\param\neq 0$ there is no obvious reason to expect that the projection of the eigendistributions are automatically trivial (as in the linear case). Hence, having ergodic integrals with a growth $t^\beta$, $\beta\not\in\{0,1\}$ seems to be related to having a transfer operator, associated to the measure of maximal entropy, with the second largest eigenvalue outside the disk of radius one. 
Note that, when $\param=1$ the map is no longer Anosov. In  fact, we have a map of the class studied in \cite{LM} where it is shown that the decay of correlations with respect to Lebesgue, is only polynomial. In particular, this shows that the Ruelle transfer operator (associated to Lebesgue) cannot have a spectral gap. This is suggestive, although the relevance of such a fact for the present context is doubtful since we are studying different operators.\footnote{ In fact, in \cite{BCV16} it is proven that the operator $\cL_{\bF_1}$ has a spectral gap. More in general, in the case of area preserving Anosov maps Giovanni Forni gave us an argument showing that there should not be obstructions to the boundedness of the ergodic averages, the general case is however unclear.} In conclusion, it might be possible to have non trivial obstructions to the boundedness of the ergodic integral, although no such example is currently known.\footnote{ Though some hope is given by the construction of generic examples, for the operator associated to the SRB measure, with spectrum different from $\{0,1\}$ by Alexander Adam \cite{AA}. See also the more recent results in \cite{ban-naud} based on special examples described in \cite{sli-ban-J} for which the spectrum can be explicitly computed.}
\end{rem}
Next, let us investigate the coboundary regularity. This amounts to understanding the growth of $\langle \bv,\nabla\overline H_{\param,T}(g)\rangle$ where, as in \eqref{def:eraverage},
\[
\oH_{\param,T}(g) (x)= -\int_{\bRpe} dt  \,\chi\circ \tau_{n_T}(x,t) g\circ \phi_{\param,t}(x).
\]
Almost all the rest of Section \ref{sec:non-lin-ex} is devoted to proving the following claim.
\begin{lem}\label{thm:lip}
For $r$ large enough, there exist $N_2\in\{2,3,4,5\}$, and distributions $\cO_{2,\param},\dots,\cO_{N_2,\param}\in(\cC^{r})'$ such that, if $g\in\cC^r$ belongs to the kernel of  $\cO_{1,\param}$ and $\cO_{2,\param}$, then
 \begin{equation}\label{eq:lip0}
\|\nabla\overline H_{\param,T}(g)(x)\|\leq \Const T^{1+\const\param},
 \end{equation}
while if $g$ belongs to the kernels of all the $\cO_{1,\param},\dots,\cO_{N_2,\param}$, then
 \begin{equation}\label{eq:lip}
\sup_{T\in\bRp}\sup_{x\in\bT^2}\|\nabla\overline H_{\param,T}(g)(x)\|<\infty,
 \end{equation}
 and $g$ is  a Lipschitz coboundary. 
 In addition, there exists $C_*>0$, and a non zero function $\Omega_*^\dagger\in\cC^0(\bT^2,\bR)$ such that, for all $\param\in (0,\param_0)$, $T\geq C_*\ln\param^{-1}$ and $\vf$ $\cC^2$-generic, we have that $\cO_{2,\param}$ is a measure and
\[
 \begin{split}
|\langle V^\perp,\nabla\overline H_{\param,T}(g)(x)\rangle|\geq &\param^2\left|\Omega_*^\dagger(x)\cO_{2,\param} (g)\right|\bar\nu^{-3n_T}e^{-\const\param n_T}-C_*\param\|g\|_{\cC^r}\bar\nu^{-n_T}e^{\const \param n_T}.
\end{split}
 \]
 \end{lem}
 \begin{proof}
 The first claims are just a particular instance of Theorem \ref{thm:maintwo}, apart from the fact that $N_2\in\{2,3,4,5\}$. In other words the distributions are the one constructed in the proof of Theorem \ref{thm:maintwo} while the fact that $N_2\leq 5$ follows by perturbation theory, Lemma \ref{lem:test-function} and formula  \eqref{eq:deriv-second}. Also the inequalities \eqref{eq:lip0}, \eqref{eq:lip} follow by equations \eqref{eq:deriv-second}, \eqref{eq:preliminary2}, the spectral analysis of $\cL_{\bF_0}, \widehat\cL_{\bF_0}$ carried out in sections \ref{sec:ergo}, \ref{subsec:cob} (which shows that at most one eigenvalue of $\cL_{\bF_0}$ and three eigenvalues of $\widehat\cL_{\bF_0}$ can contribute to the growth in \eqref{eq:lip}) and standard perturbation theory for transfer operators \cite{KellerLiverani99}. The assertion that $\cO_{2,\param}$ is a measure and the last inequality require much more work and will be proven in Section \ref{sec:obstruction} (more precisely, the first assertion follows from Remark \ref{rem:measure}, while the inequality follows from equation \eqref{eq:akbar3}). 
 \end{proof}
 \begin{rem} Note that the $\cO_{i,\param}$, which are defined in the proof of Theorem \ref{thm:maintwo}, could be trivial (this is the case for $\cO_{2,\param}$, see Lemma \ref{lem:fuck}). If they were all trivial, then the case $\param\neq 0$ would behave like the linear model: either the ergodic integral of $g$ grows linearly or $g$ is a Lipschitz coboundary. If this is the case or not it remains to be seen.
 \end{rem}
\begin{lem}\label{lem:fuck}
There exists $\param_0>0$ such that for all $|\param|<\param_0$ the obstruction $\cO_{2,\param}$ is invariant for the flow, hence it is identically zero or proportional to $\cO_{1,\gamma}$.
Moreover, for all $g \in\cC^r$, $\lim_{\param\to 0}\cO_{2,\param}(g)=0$. 
\end{lem}
\begin{proof}
Assume that $\cO_{2,\param}(g)\neq 0$ for some $g\in\cC^{1+\alpha}$. Then, for each $\ve>0$ there exists $g_\ve\in\cC^r$ such that $\|g-g_\ve\|_{\cC^0}\leq\Const \ve$, $\|g-g_\ve\|_{\cC^1}\leq\Const \ve^\alpha$ and $\|g\|_{\cC^r}\leq \Const\ve^{-r+1}$. By  \eqref{eq:deriv-second-00} and Lemma \ref{lem:test-function}
\[
|\langle V^\perp,\nabla\overline H_{\param,T}(g-g_\ve)(x)\rangle|\leq\Const \bar\nu^{-3 n_T}e^{\const\param n_T}\ve.
\]
Thus, by Lemma \ref{thm:lip},
\[
\begin{split}
|\langle V^\perp,\nabla\overline H_{\param,T}(g)(x)\rangle|\geq&\param^2\left|\Omega_*^\dagger(x)\cO_{2,\param} (g)\right|\bar\nu^{-3n_T}e^{-\const\param n_T}\\
&-C_*\param\ve^{-r+1}\bar\nu^{-n_T}e^{\const \param n_T}-\Const \bar\nu^{-3 n_T}e^{\const\param n_T}\ve.
\end{split}
\]
Choosing $\ve=e^{-\const\param n_T}$ shows that there exists $c_1>0$ such that
\begin{equation}\label{eq:lower-gen}
\begin{split}
|\langle V^\perp,\nabla\overline H_{\param,T}(g)(x)\rangle|\geq&\param^2\left|\Omega_*^\dagger(x)\cO_{2,\param} (g)\right|\bar\nu^{-3n_T}e^{-c_1\param n_T}\\
&-C_*\param\bar\nu^{-n_T}e^{\const \param n_T}-\Const \bar\nu^{-3 n_T}e^{-2c_1\param n_T}.
\end{split}
\end{equation}

Next, we show that equation \eqref{eq:lower-gen} implies that $g$ cannot be a $\cC^{1+\alpha}$ coboundary if $\cO_{2,\param}(g)\neq 0$.

Note that, differentiating \eqref{eq:deriv-firts} and doing a rough bound on the derivative yields
\[
\left\|D^2 \overline H_{\param,T}(g)\right\|\leq \Const \nu^{-4}_{\param,n_T}\leq \Const T^{4+\const \param}.
\]
Since $\bar\nu^{-3n_T}e^{-c_1\param n_T}\geq T^{3-\const\param}$, by equation \eqref{eq:lower-gen} it follows that there exists a ball of radius at least $C_g T^{-1-\const \param}$ in which the derivative of $\overline H_{\param,T}(g)$ in the $V^\perp$ direction, is larger than $\param^2C_g T^{3-\const\param}$, provided $T\geq C_g\ln\param^{-1}$. This implies the existence of a positive measure set of points $x_0,x_1$, $\|x_0-x_1\|=C_g T^{-3-\const \param}$ such that
\begin{equation}\label{eq:no-go}
|\overline H_{\param,T}(g)(x_0)-\overline H_{\param,T}(g)(x_1)|\geq C_{g,\param} T^{-\const \param}.
\end{equation}
To conclude it suffices to prove that \eqref{eq:no-go} is impossible if $g$ is a $\varpi$-H\"older coboundary with $\varpi> 1/3$. We argue by contradiction, assume that $g$ is a $\varpi$-H\"older coboundary, then there exists a function $h_\param\in\cC^{\varpi}$ such that $h_\param\circ \phi_{\param,t}-h_\param=\int_0^t g\circ \phi_{\param, s}ds$. Accordingly, by \eqref{def:eraverage} and \eqref{eq:param-tau}
\[
\begin{split}
\oH_{\param, T}(g) (x_0)-\oH_{\param, T}(g) (x_1)
=&h_\param(x_0)-h_\param(x_1)\\
&+\int_{\bRpe}h_\param\circ \phi_{\param,s}(x_0)\chi'\circ \tau_{n_T}(x_0,s)\nu_{\param, n_T}(\phi_{\param,s}(x_0))ds\\
&-\int_{\bRpe}h_\param\circ \phi_{\param,s}(x_1)\chi'\circ \tau_{n_T}(x_1,s)\nu_{\param, n_T}(\phi_{\param,s}(x_1)) ds.
\end{split}
\]
By \eqref{eq:tauder} and Lemma \ref{lem:true-parabolic}  it then follows
\[
|\oH_{\param, T}(g) (x_0)-\oH_{\param, T}(g) (x_1)|\leq C_{g,\param} T^{1+\const\param}T^{-3\varpi-\const\param}\leq C_{g,\param} T^{1-3\varpi+\const\param} 
\]
which, as announced, for $\param$ small enough, is incompatible with \eqref{eq:no-go}, provided $T$ is large enough.

It follows that for all $f\in\cC^2$, $\cO_{2,\param}(\langle V_\gamma,\nabla f\rangle)=0$, since $\langle V_\gamma,\nabla f\rangle$ is a $\cC^{1+\alpha}$ coboundary.
But this implies that $\cO_{2,\param}$ is an invariant measure for the flow $\phi_{\param,t}$. Since the invariant measure is unique, either $\cO_{2,\param}$ is identically zero or is proportional to $\cO_{1,\gamma}$.

The last assertion in the Lemma follows from \eqref{eq:sobeit}.
\end{proof} 
By the above Lemma it follows an interesting fact.
\begin{cor}\label{cor:cob}
If $g\in\cC^{r}$ and $\cO_{1,\param}(g)=0$, then $g$ is a $\cC^{\frac 12-\const\param}$ coboundary.
\end{cor}
\begin{proof}
By equation \eqref{eq:lip0} and Lemma \ref{lem:fuck} we have
\[
\|\nabla\overline H_{\param,T}(g)(x)\|\leq C_g T^{1+\const\param}.
\]
By equation \eqref{eq:c0conv}, calling $h_\param$ the coboundary, it follows, for each $T>0$,
\[
\|\oH_{\param, T}(g)-h_\param\|_{\cC^0}\leq C_g \bar\nu^{n_{T}} e^{\const\param n_T},
\]
since, in the present case, the spectral radius $\theta$ of $\cL_{\bF_\param}$, restricted to the kernel of $\cO_{1,\param}$, is given by $\bar\nu e^{\const\param}$. 
Accordingly,
\[
\begin{split}
|h_\param(x)-h_\param(y)|&\leq |\oH_{\param, T}(g)(x)-\oH_{\param, T}(g)(y)|+C_g T^{-1+\const\param n_T}\\
&\leq C_g|x-y|T^{1+\const\param}+C_g T^{-1+\const\param},
\end{split}
\]
and the claim follows by choosing $T=|x-y|^{-\frac 12}$.
\end{proof}

To complete the proofs of Lemmata \ref{thm:lip} and \ref{lem:fuck} we must first identify explicitly $\cO_{2,\param}$. To this end we start by studying the operators \eqref{eq:new-new} that, in the present context, read (recall \eqref{eq:new-to})
\begin{equation}\label{eq:op-last}
\begin{split}
&\cL_{\param,A}\bg=\Xi\, \theta^*\cL_{\bF_\param,\hAcc} (\theta^*)^{-1}\,\Xi^{-1}\bg=(1-s)^{-1}\,\bg\circ \widehat\bF_\param^{-1}(x,y,s)\\
&\left[\widehat\cL_{\widehat\bF_\param} \bgg\right]_{(x,y,s)}=\left[\Xi\, \theta^*\widehat\cL_{\bF_\param}  (\theta^*)^{-1}\,\Xi^{-1}\bgg\right]_{(x,y,s)}= (1-s) \left[(\widehat\bF_\param^{-1})^*\bgg\right]_{(x,y,s)}.
\end{split}
\end{equation}
As in Subsection \ref{subsec:cob} we can write $\widehat\cL_{\widehat\bF_\param}\langle \balpha, (dx,dy)\rangle=\langle \cL_\param\balpha, (dx,dy)\rangle$ where
\begin{equation}\label{eq:uffa}
\left[\,\cL_\param\balpha\right](x,y,s)=(1-s) \left[(D F_\param^*)^{-1}\balpha\right]\circ \widehat \bF_\param^{-1}(x,y,s).
\end{equation}
\begin{rem}\label{rem:apply} 
Note that we are interested in applying the above to the case in which $\bgg=\bpi^*dg$ (recall that $\bpi(x,v)=x$) for some function $g\in\cC^r(\bT^2,\bR)$. This means that, in \eqref{eq:uffa} we are interested in $\balpha=(\nabla g)\circ \bpi$.
\end{rem}

\subsubsection{\bfseries Some preliminary facts}\ \newline  
We are thus left with the task of studying the spectrum of $\cL_{\param,A}, \cL_\param$, for small $\param$. We will use the perturbation theory developed in \cite[Section 8]{GouezelLiverani06} which shows, in particular, that all the spectral data are differentiable in $\param$, but first we need to establish some facts and notations.

By direct computation we have, setting $\pi_x(x,y,s)=x$,
\begin{equation}\label{eq:bF-deriv}
\begin{split}
&\partial_\param\widehat\bF_\param(x,y,s)=-(\vf(x), \vf(x),\vf'(x) (2-\param\vf'(x)+s)^{-2})\\
&\phantom{\partial_\param\widehat\bF_\param(x,y,s)}
=-(\vf(x), \vf(x), \vf'(x)\partial_s\psi_\param(x,s))\\
&(D\widehat\bF_\param)^{-1}=\begin{pmatrix}
1&-1&0\\
-1+\param\vf'&2-\param\vf'&0\\
-\partial_x\psi_\param/\partial_s\psi_\param&\partial_x\psi_\param/\partial_s\psi_\param&1/\partial_s\psi_\param
\end{pmatrix}\\
&(D\widehat\bF_\param)^{-1}\partial_\param\widehat\bF_\param(x,y,s)=-(0,\vf(x),\vf'(x))\\
&\partial_\param\widehat\bF^{-1}_\param(x,y,s)=-\left[(D\widehat\bF_\param)^{-1}\partial_\param\widehat\bF_\param\right]\circ\widehat\bF_\param^{-1}(x,y,s)\\
&\phantom{\partial_\param\widehat\bF^{-1}_\param(x,y,s)}
=(0,\vf,\vf')\circ \pi_x\circ \widehat\bF_{\param}^{-1}(x,y,s)\,.
\end{split}
\end{equation}
In alternative, one can compute the above noticing that
\begin{equation}\label{eq:inver-F}\begin{split}
\bF_\param^{-1}(x,y,s)&=\left(x-y, 2y-x+\param\vf(x-y),\psi^-_\param(x,y,s)\right)\\
\psi^-_\param(x,y,s)&=\frac{2s-1+\param\vf'(x-y)(1-s)}{1-s}\;;\quad \partial_s \psi^-_\param(x,y,s)=(1-s)^{-2}.
\end{split}
\end{equation}
Hence, for each $\balpha=(\eta_1,\eta_2)\in\cC^\infty(\bT^2\times (-\beta_1,-\beta_2),\bR^2)$,
\begin{equation}\label{eq:Lderivative}
\begin{split}
\partial_\param \cL_\param\balpha&=(1-s)\left[(D F_\param^*)^{-1}\Upsilon\balpha\right]\circ \widehat\bF_{\param}^{-1}=\cL_\param\Upsilon\balpha\\
\Upsilon\balpha&=\left[\vf\circ \pi_x\partial_y\balpha+\vf'\circ \pi_x\partial_s\balpha+\vf'\circ \pi_x \langle e_2,\balpha\rangle e_1)\right].
\end{split}
\end{equation}

We are interested in the maximal eigenvalue $\mu_\param$ of $\cL_\param$, which we know to be simple, and the associated left and right eigenvectors $\bell_\param,\bh_\param$. 
Thus, for all $\balpha\in\cC^\infty(\bT^2\times (-\beta_1,-\beta_2),\bR^2)$, 
\begin{equation}\label{eq:hate} 
\cL_\param\bh_\param=\mu_\param \bh_\param\;\quad \bell_\param(\cL_\param\balpha)=\mu_\param\bell_\param(\balpha)
\end{equation}
and, recalling \eqref{eq:f-product}, 
\begin{equation}\label{eq:zero-status}
\begin{split}
&\mu_0=\bar\nu^{-2}\\
&\bh_0= V f(s)\\
&\bell_0(\balpha)=\int_{\bT^2}\langle V,\balpha(x,y,\bar s)\rangle dx dy.
\end{split}
\end{equation}
Moreover, calling $V_{+,\param}$, $\|V_{+,\param}\|=1$, the unstable distribution of $F_\param$, we have
\begin{equation}\label{eq:unstable-shit}
DF_\param^n V_{+,\param}=\nu^u_{\param,n} V_{+,\param}\circ F^n_\param.
\end{equation}
Since
\[
\langle DF_\param^* V_{+,\param}^\perp\circ F_\param , V_{+,\param}\rangle=\langle V_{+,\param}^\perp\circ F_\param, DF_\param V_{+,\param}\rangle=0,
\]
we have $DF_\param^* V_{+,\param}^\perp\circ F_\param=\tau  V_{+,\param}^\perp$ and, multiplying the latter relation by $ V_{\param}$ yields
\[
DF_\param^* V_{+,\param}^\perp\circ F_\param=\frac{\nu_{\param,1}\langle V_\param,  V_{+,\param}^\perp\rangle\circ F_\param}{\langle V_\param,  V_{+,\param}^\perp\rangle}V_{+,\param}^\perp.
\]
Thus
\begin{equation}\label{eq:FnT}
(DF_\param^{*})^{-1} V_{+,\param}^\perp=\frac{\langle V_\param^\perp, V_{+,\param}\rangle}{\nu_{\param,1}\langle V_\param^\perp, V_{+,\param}\rangle\circ F_\param}V_{+,\param}^\perp\circ F_\param.
\end{equation}
Analogously,
\[
(DF_\param^{*})^{-1} V_{\param}^\perp=\frac{\langle V_\param^\perp, V_{+,\param}\rangle}{\nu^u_{\param,1}\langle V_\param^\perp, V_{+,\param}\rangle\circ F_\param}V_{\param}^\perp\circ F_\param.
\]
Note that we can always write  $\balpha=\frac1{\langle V_\param^\perp, V_{+,\param}\rangle}\left[a V_{\param}^\perp+b V_{\param,+}^\perp\right]$,
hence we have
\[
\cL_\param\balpha= \frac1{\langle V_\param^\perp, V_{+,\param}\rangle}\left[ V_{\param,+}^\perp\frac{1-s}{\nu_{\param,1}}b\circ \bF_\param^{-1}+V_{\param}^\perp \frac{1-s}{\nu^u_{\param,1}}a\circ \bF_\param^{-1}\right].
\]
Thus, setting\footnote{ The operators are well defined on $\cB^{1,\alpha}$, which suffices for the present purpose to characterise the maximal eigenvectors.}
\[
\begin{split}
&\cL_{0,\param} f=\frac{1-s}{\nu^u_{\param,1}}f\circ \bF_\param^{-1}\\
&\cL_{2,\param} f=\frac{1-s}{\nu_{\param,1}}f\circ \bF_\param^{-1},
\end{split}
\]
we have, for each $n\in\bN$,
\[
\cL_\param^n\balpha= \frac1{\langle V_\param^\perp, V_{+,\param}\rangle}\left[ V_{\param,+}^\perp\cL_{2,\param}^nb+V_{\param}^\perp \cL_{0,\param}^n a\right].
\]
Note that the maximal eigenvalue of $\cL_{0,0}$ is one, while the maximal eigenvalue of $\cL_{2,0}$ is 
$\mu_0=\bar\nu^{-2}$. Also note that the operators $\cL_{2,\param}$ are well defined on $\cB^{1,\alpha}$ and, on such spaces, it has essential spectral radius bounded by $\mu_\param\bar\nu^{\alpha} e^{\const\param}$. Thus, by perturbation theory, $\sigma_{\cB^{1,\alpha}}(\cL_{2,\param})\subset \{\mu_\param\}\cup\{z\in\bC\;:\;|z|\leq \mu_\param\bar\nu^{\alpha} e^{\const\param}\}$.

Let us set $\bar s_\param\in\cC^{1+\alpha}(\bT^2,\bR_{<})$ so that $V_\param=(1,\bar s_\param)[1+\bar s_\param]^{-\frac 12}$ and, for each $\psi\in\cC^0(\Omega,\bR)$, we define 
\begin{equation}\label{eq:barpi}
\bar\pi_\param \psi(x,y)=\psi(x,y,\bar s_\param(x,y)).
\end{equation}
 Then, 
\[
\|\cL_{2,\param}^n \psi-\cL_{2,\param}^n \bar\pi_\gamma \psi\|_{\cC^0}\leq \Const(\|\psi\|_{\cC^{1}} e^{\const \param n}).
\]
By the above considerations it follows
\[
\begin{split}
\bh_\param\bell_\param(\balpha)&=\frac1{\langle V_\param^\perp, V_{+,\param}\rangle}\mu_\param^{-n}V_{\param,+}^\perp\cL_{2,\param}^n(b)+\mu_\param^{-n}\cO(\|\balpha\|_{\cC^0} e^{\const \param n})\\
&=\frac1{\langle V_\param^\perp, V_{+,\param}\rangle}\mu_\param^{-n}V_{\param,+}^\perp\cL_{2,\param}^n(\bar\pi_\param b)+\mu_\param^{-n}\cO(\|\balpha\|_{\cC^0} e^{\const \param n})\\
&= \frac1{\langle V_\param^\perp, V_{+,\param}\rangle}h_{2,\param} V_{\param,+}^\perp\ell_{2,\param}(\bar\pi_\param b)+\bar\nu^{\alpha n}\cO(\|\balpha\|_{\cC^{1+\alpha}} e^{\const \param n}),
\end{split}
\]
where $h_{2,\param}, \ell_{2,\param}$ are the right and left maximal eigenvectors of $\cL_{2,\param}$, respectively normalised so that $\ell_{2,\param}(1)=\ell_{2,\param}(h_{2,\param})=1$. Taking the limit $n\to \infty$ it follows
\begin{equation}\label{eq:noncisicrede}
\begin{split}
& \bell_\param(\balpha)=\ell_{2,\param}\left(\bar\pi_\param \langle V_\param,\balpha\rangle\right)\\
&\bh_\param=\frac1{\langle V_\param, V_{+,\param}^\perp\rangle}V_{\param,+}^\perp h_{2,\param}.
 \end{split}
\end{equation}
\begin{rem}\label{rem:measure0}
Note that $\ell_{2,\param}$ is the maximal left eigenvalue of a transfer operator with a $\cC^{1+\alpha}$ potential. It follows from \cite[Lemma 4.9, 4.10]{GouezelLiverani08} (by applying them to the case $\iota=0$, $r=2+\alpha$, $p=1$ and $q=\alpha$) that $h_{2,\param}, \ell_{2,\param}$ are measures, thus the same is true for $\bh_\param, \bell_{\param}$.
\end{rem}
The above provides some global information, next we need to compute the derivatives with respect to $\param$ of the various object of interest. Recall that we have seen in Subsection \ref{subsec:cob} that, outside of the essential spectrum, $\sigma(\widehat\cL_{0})= \{\bar\nu^{2k-2}\}_{k\in \bN}$. On the other hand the spectrum of $\cL_{0,A}$ can be computed as in section \ref{sec:ergo} yielding $\sigma(\cL_{0,A})= \{\bar\nu^{2k+1}\}_{k\in \bN}$.

\subsubsection{\bfseries Perturbation theory}\label{sub:perturb}\ \newline 
Differentiating equations \eqref{eq:hate} and remembering \eqref{eq:Lderivative} yields
\begin{equation}\label{eq:first-order}
\begin{split}
&\partial_\param\mu_\param=\bell_\param\left(\partial_\param \cL_\param \bh_\param\right)\\
&\partial_\param\bh_\param=\sum_{k=0}^\infty \mu_\param^{-k-1}\cL_\param^k\left[\partial_\param\cL_\param \bh_\param-\bell_\param(\partial_\param\cL_\param \bh_\param)\bh_\param\right]\\
&\partial_\param\bell_\param(\balpha)=\bell_\param\left(\mu_\param^{-1}\partial_\param\cL_\param(\Id-\mu_\param^{-1}\cL_\param)^{-1}[\balpha-\bh_\param\bell_\param(\balpha)]\right)\\
&\phantom{\partial_\param\bell_\param(\balpha)}
=\sum_{k=0}^\infty \mu_\param^{-k}\bell_\param\left(\Upsilon \cL_\param^{k}[\balpha-\bh_\param\bell_\param(\balpha)]\right) .
\end{split}
\end{equation}
Remembering equation \eqref{eq:Lderivative}, \eqref{eq:zero-status} and since $f(\bar s)=1$ (see the line before \eqref{eq:f-product}), the first of \eqref{eq:first-order} yields 
\[
\partial_\param\mu_\param|_{\param=0}=\bell_0\left(\partial_\param \cL_\param|_{\param=0} \bh_0\right)
=\int_{\bT^2}\langle V,[\cL_0 \Upsilon Vf](x,y,\bar s)\rangle dx dy.
\]
Next, we can use the definition \eqref{eq:uffa} and \eqref{eq:Lderivative} again to obtain
\begin{equation}\label{eq:derivmu}
\partial_\param\mu_\param|_{\param=0}=\bar\nu^{-2}\int_{\bT^2}[\langle V, e_1\rangle\langle e_2,V\rangle +f'(\bar s)]\vf'(x) dx dy=0.
\end{equation}
Next, \eqref{eq:Lderivative} implies
\[
\partial_\param \cL_\param \bh_\param|_{\param=0}=\cL_\param\left[f'(\bar s)\vf' V+\vf' f(\bar s)\langle e_2, V\rangle A(1,-1)\right]\circ \widehat \bF^{-1}(x,y,s).
\]
Hence
\begin{equation}\label{eq:h1-palle}
\partial_\param \bh_\param|_{\param=0}=\sum_{k=0}^\infty\bar\nu^{-2k-2}\cL_0^{k+1}\left[f'(\bar s)\vf' V+\vf' \langle e_2, V\rangle e_1\right].
\end{equation}
On the other hand, for each $\balpha\in\cC^\infty(\Omega^*,\bR^2)$, $\Omega^*=\bT^2\times (-\beta_1,-\beta_2)$, such that $\bell_0(\balpha)=0$,
\[
\begin{split}
\partial_\param\bell_\param(\balpha)|_{\param=0}=&\sum_{n=0}^\infty\int_{\bT^2}\vf(x)\langle V, \partial_y(\bar\nu^{2n}\cL_0^n(\balpha))(x,y,\bar s)\rangle\\
&+\sum_{n=0}^\infty\int_{\bT^2}\vf'(x)\langle V, \bar\nu^{2n}\partial_s(\cL_0^n(\balpha))(x,y,\bar s)\rangle\\
& +\sum_{n=0}^\infty\int_{\bT^2}\vf'(x)\langle e_2,\bar\nu^{2n}\cL_0^n(\balpha)(x,y,\bar s)\rangle \langle V,e_1\rangle .
\end{split}
\]
Note that the terms in the first sum are all identically zero by integration by parts with respect to $y$. 
We must compute the $s$-derivative in the second term of the above equation
\begin{equation}\label{eq:s-deriv}
\begin{split}
&\partial_s\widehat\bF^{-1}_\param(x,y,s)=(0,0,(\partial_s\psi_\param)^{-1}\circ \widehat\bF^{-1}_\param)=(0,0,(1-s)^{-2})\\
&\partial_s(\cL_\param^n\balpha)=-(1-s)^{-1}\cL_\param^n\balpha+(1-s)^{-2}\cL_\param \partial_s(\cL_\param^{n-1}\balpha)\\
&\phantom{\partial_s(\cL_\param^n\balpha)}
=-\sum_{k=0}^{n-1}(\nu_*^{2}\cL_\param)^k\nu_*\cL_\param^{n-k}\balpha+(\nu_*^{2}\cL_\param)^n\partial_s\balpha,
\end{split}
\end{equation}
where we have set $\nu_*(s)=(1-s)^{-1}$ and the last formula can be checked by induction.
Accordingly,
\[
\begin{split}
\partial_\param\bell_\param(\balpha)|_{\param=0}
=&-\sum_{n=0}^\infty\frac{1-\bar\nu^{2n}}{1-\bar\nu^2}\bar \nu
\int_{\bT^2}\vf'(x)\langle V, \balpha\circ \bF_0^{-n}(x,y,\bar s)\rangle\\
&+\sum_{n=0}^\infty\bar \nu^{2n}\int_{\bT^2}\vf'(x)\langle V, (\partial_s\balpha)\circ \bF_0^{-n}(x,y,\bar s)\rangle\\
& +\sum_{n=0}^\infty\int_{\bT^2}\bar\nu^{1+n}\vf'(x)\langle A^{-n}e_2,\balpha\circ \bF_0^{-n}(x,y, \bar s)\rangle \langle V,(1,-1)\rangle.
 \end{split}
\]
In the following we are interested only in the case when the one form is given by $dg$, $g\in\cC^r(\bT^2,\bR)$. This yields  $\balpha(x,y,s)=\varsigma(s)\nabla g(x,y)$, for $\varsigma(s)=\sqrt{1+s^2}$. Thus the last term of the above formula becomes
\[
\begin{split}
&\sum_{n=0}^\infty\int_{\bT^2}\bar\nu^{1+n}\vf'(x)\varsigma(\bar s)\langle A^{-n}e_2,(\nabla g)\circ F_0^{-n}(x,y)\rangle \langle V,(1,-1)\rangle\\
&=\sum_{n=0}^\infty\int_{\bT^2}\bar\nu^{1+n}\vf'(x)\varsigma(\bar s)\langle e_2,\nabla (g\circ F_0^{-n})(x,y)\rangle \langle V,(1,-1)\rangle\\
&=\sum_{n=0}^\infty\int_{\bT^2}\bar\nu^{1+n}\vf'(x)\varsigma(\bar s)\partial_y (g\circ F_0^{-n})(x,y)\rangle \langle V,(1,-1)\rangle
\end{split}
\]
which is again zero by integration by part (in the $y$ variable). We are left with
\begin{equation}\label{eq:atlast}
\begin{split}
\partial_\param\bell_\param(\varsigma\nabla g)|_{\param=0}=&-\sum_{n=0}^\infty\frac{1-\bar\nu^{2n}}{1-\bar\nu^2}\bar\nu\varsigma(\bar s)\int_{\bT^2}\vf'\circ F_0^n(x,y)\langle V, \nabla g(x,y)\rangle\\
&+\sum_{n=0}^\infty\bar \nu^{2n}\varsigma'(\bar s)\int_{\bT^2}\vf'(x)\langle V, (\nabla g)\circ F_0^{-n}(x,y)\rangle\\
=&-\sum_{n=0}^\infty\bar\nu^{n}\langle V, e_2\rangle\langle V, e_1\rangle\int_{\bT^2}\vf''\circ F_0^n\cdot g,
\end{split}
\end{equation}
where, in the last line, we have used the algebraic identities $\bar\nu\varsigma(\bar s)=-(1-\bar\nu^2)\varsigma'(\bar s)$ and $\varsigma'(\bar s)=\langle V, e_2\rangle$.

\subsubsection{\bfseries Leading obstruction: an explicit formula}\ \newline \label{sec:existence}
We assume that $g$ is of zero average with respect to the invariant measure of $\phi_{\param,t}$. Accordingly, by Lemma \ref{lem:invar-mea}, $\varsigma g\circ\bpi$ belongs to the kernel of the largest left eigenvector $\ell_\param$ of $\cL_{\hat \bF_\param}$. Hence, by perturbation theory we have
\begin{equation}\label{eq:L-growth1}
\|\cL_{\hat \bF_\param}^ng\circ\bpi\|_{p,q}\leq \Const \bar\nu^n e^{\const n\param}\|g\|_{\cC^r}.
\end{equation}
Also, again by perturbation theory, for each $\bg\in\cB^{p,q}$,
\begin{equation}\label{eq:L-growth2}
\|\cL_{\param, A}^n\bg\|_{p,q}\leq \Const \bar\nu^n e^{\const n\param}\|\bg\|_{\cC^r}.
\end{equation}
Using the above facts, recalling \eqref{eq:deriv-second}, \eqref{eq:preliminary2} and Lemma \ref{lem:test-function}, and using perturbation theory on the second largest eigenvalues of $\hL_{\bF_\param}$, we can write
\begin{equation}\label{eq:akbar}
\begin{split}
 \langle \bv (x),& \nabla \oH_{\param, T}(g)(x) \rangle 
=\sum_{l=1}^{K_T}\bH^1_{F_\param^{l n_*}(x),\chi_*\bF^{l n_*}_{\param,*}(\bv,0)}\bigg[-\hL_{\bF_\param}^{l n_*}\bpi^*dg \\
&+ \sum_{j=0}^{l n_*-1}\hL_{\bF_\param}^{l n_*-j}((\cL_{\bF_\param}^j \bg)\cdot \hbomega_B)  \\
&-\sum_{j=0}^{l n_*-1}\sum_{m=0}^{l n_*-j-1} \hL_{\bF_\param}^{l n_*-j-m}(\cL_{\bF_\param,\hAcc}^m \hDcc \cL_{\bF_\param}^{j} \bg)\cdot \hbomega_\Gamma \bigg] \\
&+\cO(\|g\|_{\cC^r}\|\bv\|e^{\const \param n_T}).
\end{split}
\end{equation}

Next, note that, by \eqref{eq:new-new-lin}, \eqref{eq:uffa}, we have
\[
\hL_{\bF_\param}^{n}\bpi^*dg=\langle \left[ \frac 1\varsigma\cL_\param^n(\varsigma\nabla g)\right]\circ \theta^{-1},(dx,dy)\rangle.
\]
Accordingly, recalling \eqref{eq:hate}, \eqref{eq:L-growth1}, \eqref{eq:L-growth2} and setting $\bh_\param^*=\langle\left[\frac 1\varsigma \bh_\param\right]\circ\theta^{-1}, (dx,dy)\rangle$, we have
\begin{equation}\label{eq:akbar1}
\begin{split}
&\langle \bv (x), \nabla \oH_{\param, T}(g)(x) \rangle =
\sum_{l=1}^{K_T}\bH^1_{F_\param^{l n_*}(x),\chi_*\bF^{l n_*}_{\param,*}(\bv,0)}\left(\bh_\param^*\right)\mu_\param^{l n_*}\Theta_{l,\param}\\
&\phantom{\langle \bv (x), \nabla \oH_{\param, T}(g)(x) \rangle =}
+\cO(\|g\|_{\cC^r}\|\bv\|\bar\nu^{-n_T}e^{\const \param n_T})\\
&\Theta_{l,\param}=-\bell_\param(\varsigma (\nabla g)\circ\bpi)+ \sum_{j=0}^{l n_*-1}\mu_\param^{-j} \bell_\param(\hBcc\circ \theta(\cL_{\widehat \bF_\param}^j \varsigma g\circ\bpi)) \\
&\phantom{\Theta_{l,\param}=}
-\sum_{j=0}^{l n_*-1}\sum_{m=0}^{l n_*-j-1} \mu_\param^{-j-m} \bell_\param(\Gamma\circ \theta\cL_{\widehat\bF_\param,\hAcc\circ \theta}^m \hDcc\circ \theta \cL_{\widehat\bF_\param}^{j} \varsigma g\circ \bpi))).
\end{split}
\end{equation}
It is educative to write $\Theta_{l,\param}$ differently. This is done by using the identity, for $n\in\bN$,
\begin{equation}\label{eq:deriv-id}
\begin{split}
\nablas\cL_{\widehat\bF_\param}^n(\varsigma g)=&\cL_\param^n (\nablas (\varsigma g))+\param\sum_{k=0}^{n-1}\cL_\param^k\eta\cL_{\widehat\bF_\param}\vf''(\nu_*^2\cL_{\widehat\bF})^{n-1-k}\partial_s(\varsigma g)\\
&-\param\sum_{k=0}^{n-2}\sum_{j=k+1}^{n-1}\cL_\param^k\eta\cL_{\widehat \bF_\param}\vf''(\nu_*^2\cL_{\widehat\bF_\param})^{n-1-j}\nu_*\cL_{\widehat\bF_\param}^{j-k}\varsigma g, 
\end{split}
\end{equation}
where $\nablas g=(\partial_x g,\partial_y g)$ and $\bw=(1,-1)$. The identity \eqref{eq:deriv-id} can be checked by induction using \eqref{eq:s-deriv} and recalling \eqref{eq:uffa}, $\nu_*(s)=(1-s)^{-1}$. Applying $\bell_\param$ to \eqref{eq:deriv-id} yields, for all $n\in\bN$,
\[
\begin{split}
\bell_\param(\varsigma (\nabla g)\circ\bpi)=&\mu_\param^{-n}\bell_\param(\nablas\cL_{\widehat\bF_\param}^n(\varsigma g))-\param\sum_{k=0}^{n-1}\mu_\param^{-k-1}\bell_\param\left(\bw\cL_{\widehat\bF_\param}\vf''(\nu_*^2\cL_{\widehat\bF_\param})^{k}\varsigma' g\right)\\
&+\param\sum_{k=0}^{n-1}\sum_{j=0}^{k-1}\mu_\param^{-k-1}\bell_\param(\bw\cL_{\widehat \bF_\param}\vf''(\nu_*^2\cL_{\widehat\bF_\param})^{j}\nu_*\cL_{\widehat\bF_\param}^{k-j}\varsigma g)
\end{split}
\]
We can then write
\begin{equation}\label{eq:slick}
\begin{split}
&\Theta_{l,\param}=-\mu_\param^{-l n_*}\bell_\param(\nablas\cL_{\widehat\bF_\param}^{l n_*}(\varsigma g))+\Theta_{l,\param}^*\\
&\Theta_{l,\param}^*=\param\sum_{k=0}^{l n_*-1}\mu_\param^{-k-1}\bell_\param\left(\bw\cL_{\widehat\bF_\param}\vf''(\nu_*^2\cL_{\widehat\bF_\param})^{k}\varsigma' g\right)\\
&\phantom{\Theta_{l,\param}^*=}-\param\sum_{k=0}^{l n_*-1}\sum_{j=0}^{k-1}\mu_\param^{-k-1}\bell_\param(\bw\cL_{\widehat \bF_\param}\vf''(\nu_*^2\cL_{\widehat\bF_\param})^{j}\nu_*\cL_{\widehat\bF_\param}^{k-j}\varsigma g)\\
&\phantom{\Theta_{l,\param}=}
+ \sum_{j=0}^{l n_*-1}\mu_\param^{-j} \bell_\param(\hBcc\circ \theta(\cL_{\widehat \bF_\param}^j \varsigma g\circ\bpi)) \\
&\phantom{\Theta_{l,\param}^*=}
-\sum_{j=0}^{l n_*-1}\sum_{m=0}^{l n_*-j-1} \mu_\param^{-j-m} \bell_\param(\Gamma\circ \theta\cL_{\widehat\bF_\param,\hAcc\circ \theta}^m \hDcc\circ \theta \cL_{\widehat\bF_\param}^{j} \varsigma g\circ \bpi))).
\end{split}
\end{equation}
Note that
\[
| \Theta_{\infty,\param}^*-\Theta_{l,\param}^*|\leq \Const \|g\|_{\cC^r}\mu_\param^{l n_*}\bar\nu^{l n_*}e^{\const\gamma l n_*}
\]
Thus we can finally identify $\cO_{2,\param}$ and rewrite \eqref{eq:akbar1} as
\begin{equation}\label{eq:akbar2}
\begin{split}
&\langle \bv (x), \nabla \oH_{\param, T}(g)(x) \rangle =\param
\sum_{l=1}^{K_T}\bH^1_{F_\param^{l n_*}(x),\chi_*\bF^{l n_*}_{\param,*}(\bv,0)}\left(\bh_\param^*\right)\mu_\param^{l n_*}\cO_{2,\param}\\
&\phantom{\langle \bv (x), \nabla \oH_{\param, T}(g)(x) \rangle =}
+\cO(\|g\|_{\cC^r}\|\bv\|\bar\nu^{-n_T}e^{\const \param n_T})\\
&\cO_{2,\param}(g)=\param^{-1}\Theta_{\infty,\param}^*.
\end{split}
\end{equation}
\begin{rem}\label{rem:measure}
Note that, by Remark \ref{rem:measure0}, $\cO_{2,\param}$ is a measure for each $\param$. We will see shortly that it does not blow up for $\gamma\to 0$.
\end{rem} 
Equation \eqref{eq:akbar1} shows the form of the obstruction. However one must rule out a nasty possibility: $\bH^1_{F^{l n_*}(x),\chi_*\bF^{l n_*}_*(\bv,0)}(\bh^*_\param)$ could be identically zero. To dismiss such a conspiracy we need to better understand the flow derivative. We take the opportunity for an interesting and useful digression.

\subsubsection{\bfseries Interlude: Truly parabolic}\label{sec:trulyp}\ \newline  
The next result, a refinement of Lemma \ref{lem:parabolic} adapted to the present context, shows that our flows are typically not elliptic. 
\begin{lem}\label{lem:true-parabolic} 
For each $t\in\bRp$ we have\footnote{ Here, and in the following, we use the quantum mechanical notation $|v\rangle\langle w|$ for the tensor product $v\otimes w$ as we find it more convenient.}
\begin{equation}\label{eq:flowderrep}
D_\xi\phi_{\param,t}=|V_\param\circ \phi_{\param,t}\rangle\langle V_\param|+a_\param(\xi,t)|V_\param\circ \phi_{\param,t}\rangle\langle V_\param^\perp|+ b_\param(\xi,t)|V^\perp_\param\circ\phi_{\param,t}\rangle\langle V_\param^\perp|
\end{equation}
where, for some $C_0,b_*, t_*>0$ and for all $t\geq t_*$,
\[
\begin{split}
&C_0 t^{-b_*\param}\leq b_\param(\xi,t)\leq C_0^{-1} t^{b_*\param}\\
& |a_\param(\xi,t)|\leq \param C_0^{-1} t^{1+b_* \param}. 
\end{split}
\]
Moreover, provided $\param$ is small enough, generically\footnote{ In the $\cC^2$ topology in the set $\|\vf\|_{\cC^2}<1$.} there exists $c_*>0$ such that
\[
\sup_x\limsup_{t\to\infty} b_\param(x,t) t^{-c_*\param^2}=\infty.
\]
\end{lem}
\begin{proof}
We follow the logic of Lemma \ref{lem:parabolic} but using the special properties of the flows under consideration.
To this end note that since $\det DF_\param=1$, for all $m\in \bN$,
\begin{equation}\label{eq:derivativeF}
\begin{split}
D F_{\param}^m=&\nu_{\param,m}|V_{\param}\circ F_{\param}^m\rangle\langle V_{\param}|+\langle V_{\param}\circ F_{\param}^{m},D F_{\param}^{m} V_\param^\perp\rangle  |V_{\param}\circ F_{\param}^{m}\rangle\langle V_{\param}^\perp|\\
&+\nu_{\param,m}^{-1}|V_{\param}^\perp\circ F_{\param}^{m}\rangle\langle V_{\param}^\perp|\\
D F_\param^{-m}\circ F_\param^m=&\nu_{\param,m}^{-1}|V_{\param} \rangle \langle V_{\param}\circ F_{\param}^m|-\langle V_{\param}\circ F_{\param}^{m},D F_{\param}^{m} V_\param^\perp\rangle  |V_{\param}\rangle\langle V_{\param}^\perp\circ F_{\param}^{m}|\\
&+\nu_{\param,m}|V_{\param}^\perp\rangle\langle V_{\param}^\perp\circ F_{\param}^{m}|.
\end{split}
\end{equation}
By  \eqref{eq:uff} we can write\footnote{ Where we have used \eqref{eq:derivativeF} and that $D_{F_\param^m(x)}\phi_{\param,\tau_{m}(\xi,t)}V_\param\circ F_\param^m(\xi)=V_\param(F_\param^m(\phi_{\param,t}(\xi))$.}
\begin{equation}\label{eq:parabolic}
\begin{split}
&b_\param(\xi,t)=\nu_{\param,m}\circ\phi_{\param,t}\langle V_\param^\perp(F_\param^m\circ\phi_{\param,t}(\xi)), D_{F_\param^m(x)}\phi_{\param,\tau_{m}(\xi,t)}\cdot D_\xi F_\param^mV_\param^\perp(\xi)\rangle\\
&\phantom{b}=\frac{\nu_{\param,m}\circ \phi_{\param,t} (\xi)}{\nu_{\param,m}(\xi)}\langle V_\param^\perp( F_\param^m\circ\phi_{\param,t}(\xi)), D_{F_\param^m(\xi)}\phi_{\param,\tau_{m}(\xi,t)}V_\param^\perp(F_\param^m(\xi))\rangle.
\end{split}
\end{equation}
By the arbitrariness of $m$ it follows
\begin{equation}\label{eq:b-def}
b_\param(\xi,t)=\lim_{m\to\infty}\frac{\nu_{\param,m}\circ \phi_{\param,t} (\xi)}{\nu_{\param,m}(\xi)}.
\end{equation}
Also, for future use, note that \eqref{eq:flowderrep} implies
\[
b_\param(\xi,t)=\det(D\phi_t).
\]
For $m_t=\Const\frac{\ln t}{\htop}$ and $s\in [0,t]$ we have, by  \cite[Lemma C.3]{GLP13},\footnote{ Recall that, by structural stability, the topological entropy is constant for the family $F_\param$.} 
\[
\|F_\param^{m_t}(\xi)-F_\param^{m_t}\circ \phi_{\param,s}(\xi)\|\leq\Const st^{-3}.
\]
Accordingly, provided $s\in [0,t]$, $t\geq 1$,
\begin{equation}\label{eq:b-asy}
b_\param(\xi,s)=\prod_{j=0}^{\infty}\frac{\nu_{\param,1}\circ F_\param^j\circ \phi_{\param,s} (\xi)}{\nu_{\param,1}(F_\param^j\xi)}=\frac{\nu_{\param,m_t}\circ \phi_{\param,s} (\xi)}{\nu_{\param,m_t}(\xi)}\left[1+\cO(t^{-2})\right].
\end{equation}
The above formula implies, for $\param$ small enough,
\begin{equation}\label{eq:b-bound}
\Const t^{-\const \param/\htop}\leq |b_\param(\xi,t)|\leq e^{\const \param m_t}\leq\Const t^{\const \param/\htop}\leq \Const \sqrt t.
\end{equation}
One might expect that $b_\param$ oscillates in time, however it cannot be always small. To see this consider
\[
\cE^*_t=\int_0^tb_\param(\xi,s)=\int_0^t\frac{\nu_{\param,m_t}\circ \phi_{\param,s} (\xi)}{\nu_{\param,m_t}(\xi)} ds+\cO(1/\sqrt t).
\]
 For each $\xi\in\bT^2$ let $m^*_t(\xi)$ be an integer so that $\tau_{m^*_t(\xi)}(\xi,t)\in (c_*, c_*^{-1})$. Since \cite[Lemma C.3]{GLP13} implies that $e^{\htop m^*_t(\xi)}\geq \Const t$,\footnote{ Here $\htop$ is the topological entropy of $F_\param^{-1}$, which coincides with the topological entropy of $F_\param$ (since two trajectories are $\ve$-separated for $F_\param$ iff they are $\ve$-separated for $F_{\param}^{-1}$).} it follows that we can choose $m^*_t(\xi)=m^*_t$, independent on $\xi$, provided $c_*$ has been chosen small enough. Then, recalling \eqref{eq:param-tau},
\[
\begin{split}
\cE^*_t(\xi)&\geq e^{-\const}\int_0^t \frac{\nu_{\param,m^*_t}\circ \phi_{\param,s}(\xi)}{\nu_{\param,m^*_t}(\xi)}ds+\cO(1)\geq \frac{\Const }{\nu_{\param,m^*_t}(\xi)}+\cO(1)\\
&\geq \Const t\frac{e^{-\htop m^*_t}}{\nu_{\param,m^*_t}(\xi)}+\cO(1).
\end{split}
\]
On the other hand, if $\overline\mu_{e,\param}$ is the (invariant) measure of maximal entropy of $F_\param$ generically, for $\param>0$, it will not be the SRB measure and hence, by Ruelle inequality, \cite[Theorem 1.5]{LY85} and structural stability we have\footnote{\label{foo:nuu} The first equality follows from the fact that $F_\param$ is area preserving and the invariance of $\overline\mu_{e,\param}$. Indeed, let $\theta_\param$ be the angle between $V_\param, V_{+,\param}$, that is $\|V_\param\wedge V_{+,\param}\|=|\sin \theta_\param|$, then
\[
|\sin\theta_\param|=\|F_\param^*( V_{\param}\circ F_{\param}^{-1}\wedge V_{+,\param}\circ F_{\param}^{-1})\|=\|(DF_\param V_{\param})\wedge (DF_\param V_{+,\param})\|=\nu_{\param,1}\nu^u_\param|\sin\theta_\param\circ F_\param|.
\]
}
\[
\overline\mu_{e,\param}(\ln\nu_{\param,1}^{-1})=\overline\mu_{e,\param}(\ln\nu^u_{\param})>\htop=\overline\mu_{h,0}(\ln\nu_{0,1}^{-1}).
\]
In fact, using perturbation theory (as in the proof of \cite[Proposition 8.1]{GouezelLiverani08}, but doing a painful second order computation), one can get the following more precise result which (rather lengthy) proof we momentarily postpone.
\begin{sublem}\label{sublem:second} 
There exists $\param_0,\bar c>0$ such that, for $|\param|\leq \param_0$,
\[
\overline\mu_{e,\param}(\ln\nu_{\param,1}^{-1})\geq \htop+\bar c\param^2.
\]
\end{sublem}
\noindent Then, by Birkhoff theorem, for  $\overline\mu_{e,\param}$-almost all $\xi\in\bT^2$, provided $t$ is large enough,
\begin{equation}\label{eq:realgrowth}
\cE^*_t(\xi)\geq \Const t^{1+\const \param^2},
\end{equation}
from which the last statement of the Lemma follows.
To study $a_\param$, let us set $\zeta(t)=D\phi_{\param,t}$. By the smooth dependence with respect to the initial conditions and recalling \eqref{eq:V-deriv}, we have 
\begin{equation}\label{eq:deriv-cont}
\dot\zeta(t)=\left[V_\param^\perp\otimes p_\param\right]\circ\phi_{\param,t}\, \zeta(t),
\end{equation}
hence $\langle V_\param,\dot \zeta V_\param^\perp\rangle=0$ and, differentiating our representation of $\zeta$,
\begin{equation}\label{eq:aderivative}
\dot a_\param(t)=\langle V_\param, p_\param\rangle\circ \phi_{\param,t} \,b_\param(t),
\end{equation}
from which the wanted bound follows by \eqref{eq:b-bound} and integrating.
\end{proof}
\begin{proof}[{\bf Proof of Sub-Lemma \ref{sublem:second}}]
By \cite[Theorem 6.4]{GouezelLiverani08} it follows that the transfer operator associated to the measure of maximal entropy is exactly $\cL_{\bF_\param} $.\footnote{ Recall that the definition of the space here is a bit different with respect to \cite{GouezelLiverani08}. However the two spaces, and hence the two transfer operators, are related by the continuous isomorphism \eqref{eq:conjugate}.} Hence, recalling \eqref{eq:L-reduced},  and calling $\ell_\param$, $h_\param$ the left and right eigenvector of $\cL_{\widehat\bF_\param}$ associated to the maximal eigenvalue $\tilde\mu_\param$ we have, for all $g\in\cC^0(\bT^2,\bR)$,
\[
\overline\mu_{e,\param}(g)=\ell_\param(g h_\param).
\]
In addition \footnote{ The second equality follows from structural stability.} 
\begin{equation}\label{eq:strut}
\tilde\mu_\param=e^{\htop(F_\param)}=e^{\htop(F_0)}=\bar\nu^{-1}=:\tilde \mu.
\end{equation}
 Also, by \cite[equation (8.10)]{GouezelLiverani08} and using the relation between the eigenvectors of $\cL_{\bF_\param}$ and $\cL_{\widehat\bF_\param}$, we have
\begin{equation}\label{eq:tocompute}
\begin{split}
\overline\mu_{e,\param}(\ln\nu_{\param,1}^{-1})&=\ell_\param(h_\param\ln(1-s))+\ell_\param(h_\param\ln\varsigma\circ\widehat\bF_\param^{-1})-\ell_\param(h_\param\ln\varsigma)\\
&=\ell_\param(h_\param\ln(1-s)),
\end{split}
\end{equation}
where we have used the invariance of the measure $\ell_\param(h_\param\,\cdot)$ with respect to $\widehat\bF_\param$.
Recall, from section \ref{sec:ergo}, that 
\begin{equation}\label{eq:recap0}
\begin{split}
&h_0(x,y,s)=f(s)=\prod_{k=0}^\infty \bar\nu (1-\bar s+\psi^{-k}(\bar s)-\psi^{-k}(s))\\
&\ell_0(\vf)=\int\vf(x,y, \bar s) dx dy\\
&\psi^{-1}(s)=\frac{ 2s-1}{1-s}\,; \quad1-\bar s=\bar\nu^{-1}.
\end{split}
\end{equation}
Clearly $f(\bar s)=1$, also, for further reference, note that a direct computation yields\footnote{ Note that
\[
f'(s)=-f(s)\sum_{k=0}^\infty\frac{\bar\nu\prod_{j=0}^{k-1}[\nu_*\circ \psi^{-j}(s)]^2}{1+\bar\nu\psi^{-k}(\bar s)-\bar\nu\psi^{-k}(s)}.
\]
}
\begin{equation}\label{eq:f-deriv}
\begin{split}
f'(\bar s)&= -\frac{\bar\nu}{1-\bar\nu^2} \\
f''(\bar s)&=f'(\bar s)^2-\sum_{k=0}^\infty\left\{\sum_{j=0}^{k-1} 2\bar\nu^{2k+2j+2}+\bar\nu^{4k+2}\right\}=0.
\end{split}
\end{equation}
Then, $\ell_0(h_0\ln(1-s))=-\ln\bar\nu=\htop$. As the expression on the right hand side of \eqref{eq:tocompute} is smooth in $\param$ (see \cite[Theorem 2.7]{GouezelLiverani06}), it suffices to compute the derivatives in zero.
Note that we can normalize $h_\param,\ell_\param$ so that $\ell_\param(h_\param)=1$ and $\ell_\param(\partial_\param h_\param)=0$. It follows $[\partial_\param\ell_\param](h_\param)=0$. 

We start with the analogous of \eqref{eq:first-order}:\footnote{ The last equality in the first line follows from \eqref{eq:strut}.}
\begin{equation}\label{eq:first-order-bis}
\begin{split}
&\partial_\param\tilde\mu_\param=\ell_\param(\partial_\param\cL_{\widehat\bF_\param} h_\param)=0\\
&\partial_\param h_\param=(\tilde\mu_\param-\cL_{\widehat\bF_\param})^{-1}\partial_\param\cL_{\widehat\bF_\param}h_\param\\
&\partial_\param\ell_\param(g)=\ell_\param\left(\partial_\param\cL_{\widehat\bF_\param}(\tilde\mu_\param-\cL_{\widehat\bF_\param})^{-1}[g-h_\param\ell_\param(g)]\right).
\end{split}
\end{equation}
Recalling  \eqref{eq:L-reduced}, \eqref{eq:bF-deriv} we have
\begin{equation}\label{LF-deriv}
\begin{split}
&\partial_\param\cL_{\widehat\bF_\param}g=(1-s)\langle \nabla g,(0,\vf,\vf')\circ \pi_x\rangle\circ \widehat\bF_{\param}^{-1}(x,y,s)
=\cL_{\widehat\bF_\param}\widehat \Upsilon g\\
&\widehat \Upsilon g=\langle \nabla g,(0,\vf,\vf')\circ \pi_x\rangle.
\end{split}
\end{equation}
It follows that
\begin{equation}\label{derivative-zero}
\begin{split}
&\partial_\param h_\param|_{\param=0}=(\tilde\mu-\cL_{\widehat\bF_0})^{-1}\partial_\param\cL_{\widehat\bF_0}h_0\\
&\partial_\param\ell_\param|_{\param=0}(g)=\ell_0\left(\partial_\param\cL_{\widehat\bF_0}(\tilde\mu-\cL_{\widehat\bF_0})^{-1}[g-h_0\ell_0(g)]\right).
\end{split}
\end{equation}
Thus, setting $\Gamma(\param)=\ell_\param(\ln(1-s)h_\param)$, we have $\Gamma(0)=\ln\bar\nu^{-1}$ and
\begin{equation}\label{eq:first-palla}
\begin{split}
\Gamma'(\param)=&\tilde\mu\ell_\param\left(\langle \nabla (\tilde\mu-\cL_{\widehat\bF_\param})^{-1}\left[\ln(1-s)-\Gamma(\param)\right] h_\param, (0,\vf,\vf')\rangle\right)\\
&+\ell_\param\left(\ln(1-s)(\tilde\mu-\cL_{\widehat\bF_\param})^{-1}\cL_{\widehat\bF_\param}\langle\nabla h_\param, (0,\vf,\vf')\rangle\right).
\end{split}
\end{equation}
Accordingly,
\[
\begin{split}
\Gamma'(0)=&\sum_{k=0}^\infty\ell_0\left(\langle \nabla \bar\nu^k\cL_{\widehat\bF_0}^k\left[\ln(1-s)+\ln\bar\nu\right] f(s),(0,\vf,\vf')\rangle\right)\\
&+\sum_{k=0}^\infty\ell_0\left(\ln(1-s)\bar\nu^{k+1}\cL_{\widehat\bF_0}^{k+1}\langle\nabla h_0, (0,\vf,\vf')\rangle\right)\\
=&\sum_{k=0}^\infty\bar\nu^k\ell_0\left( \partial_s\left\{\cL_{\widehat\bF_0}^k\left[\ln(1-s) +\ln\bar\nu\right]f(s)\right\}\vf'\right).
\end{split}
\]
Note that, using \eqref{eq:alfamap} and computing as in \eqref{eq:s-deriv} we have (recall that  $\nu_*(s)=(1-s)^{-1}$), for each $g\in\cC^1$ we have
\begin{equation}\label{eq:s-deriv2}
\partial_s(\cL_{\widehat\bF_\param}^k g)=-\sum_{j=0}^{k-1}(\nu_*^{2}\cL_{\widehat\bF_\param})^j\nu_*\cL_{\widehat\bF_\param}^{k-j}g 
+(\nu_*^{2}\cL_{\widehat\bF_\param})^k\partial_sg.
\end{equation}
Thus, setting $\widehat g(x,y,s)=\left[\ln(1-s) +\ln\bar\nu\right]f(s)$,
\begin{equation}\label{eq:s-deriv-go}
\ell_0\left( \vf'\partial_s\cL_{\widehat\bF_0}^k\widehat g\right)=
-\sum_{j=0}^{k-1}\bar \nu^{2j-k+1}\widehat g(\bar s)\int_{\bT^2}\vf'(x)+\bar\nu^k\partial_s\widehat g(\bar s)\int_{\bT^2}\vf'(x)=0.
\end{equation}
It follows 
\begin{equation}\label{eq:first-order-entro}
\Gamma'(0)=0.
\end{equation}
We must then compute the second derivative.

Differentiating \eqref{eq:first-palla} we have\footnote{ To simplify notation we abuse it and use $\ell'_0$ to signify $\partial_\param\ell_\param|_{\param=0}$ and similarly for $h_\param$ and $\cL_{\widehat\bF_\param}$.}
\begin{equation}\label{eq:Gamma2}
\begin{split}
&\Gamma''(0)=\tilde\mu\ell'_0\left(\langle \nabla (\tilde\mu-\cL_{\widehat\bF_0})^{-1}\left[\ln(1-s) +\ln\bar\nu\right]h_0, (0,\vf,\vf')\rangle\right)\\
&+\sum_{k=0}^\infty\sum_{j=0}^{k-1}\tilde\mu^{-k}\ell_0\left(\langle \nabla\cL_{\widehat\bF_0}^{k-j-1}\cL_{\widehat\bF_0}'\cL_{\widehat\bF_0}^j\left[\ln(1-s) +\ln\bar\nu\right]h_0, (0,\vf,\vf')\rangle\right)\\
&+\tilde\mu\ell_0\left(\langle \nabla (\tilde\mu-\cL_{\widehat\bF_0})^{-1}\left[\ln(1-s) +\ln\bar\nu\right]h'_0, (0,\vf,\vf')\rangle\right)\\
&+\ell_0'\left(\ln(1-s)(\tilde\mu-\cL_{\widehat\bF_0})^{-1}\cL_{\widehat\bF_0}\langle\nabla h_0, (0,\vf,\vf')\rangle\right)\\
&+\sum_{k=1}^\infty\sum_{j=0}^{k-1}\tilde\mu^{-k}\ell_0\left(\ln(1-s)\cL_{\widehat\bF_0}^{k-j-1}\cL_{\widehat\bF_0}'\cL_{\widehat\bF_0}^j \langle\nabla h_0, (0,\vf,\vf')\rangle\right)\\
&+\ell_0\left(\ln(1-s)(\tilde\mu-\cL_{\widehat\bF_0})^{-1}\cL_{\widehat\bF_0}\langle\nabla h'_0, (0,\vf,\vf')\rangle\right).
\end{split}
\end{equation}
Next we must compute, one by one, the above six terms. We will use \eqref{derivative-zero}, \eqref{LF-deriv}, \eqref{eq:s-deriv2}, \eqref{eq:s-deriv-go} and \eqref{eq:recap0}. Let us proceed in the order in which the terms appear
\[
\begin{split}
&\tilde\mu\ell_0\left(\cL_{\widehat\bF_0}'(\tilde\mu-\cL_{\widehat\bF_0})^{-1}\langle \nabla (\tilde\mu-\cL_{\widehat\bF_0})^{-1}\left[\ln(1-s) +\ln\bar\nu\right]h_0, (0,\vf,\vf')\rangle\right)\\
&=\tilde\mu^2\ell_0\left(\vf'\partial_s\left\{(\tilde\mu-\cL_{\widehat\bF_0})^{-1}\langle \nabla (\tilde\mu-\cL_{\widehat\bF_0})^{-1}\left[\ln(1-s) +\ln\bar\nu\right]h_0, (0,\vf,\vf')\rangle\right\}\right)\\
&=-\sum_{k=0}^\infty\sum_{j=0}^{k-1}\bar\nu^{k+1}\ell_0\left(\vf' (\nu_*^{2}\cL_{\widehat\bF_0})^j\nu_*\cL_{\widehat\bF_0}^{k-j}
\langle \nabla (\tilde\mu-\cL_{\widehat\bF_0})^{-1}\left[\ln(1-s) +\ln\bar\nu\right]h_0, (0,\vf,\vf')\rangle\right)\\
&\phantom{=}+\sum_{k=0}^\infty\bar\nu^{k+1}\ell_0\left(\vf' (\nu_*^{2}\cL_{\widehat\bF_0})^k\partial_s
\langle \nabla (\tilde\mu-\cL_{\widehat\bF_0})^{-1}\left[\ln(1-s) +\ln\bar\nu\right]h_0, (0,\vf,\vf')\rangle\right)\\
&=-\sum_{k=0}^\infty\sum_{j=0}^{k-1}\sum_{i=0}^\infty\bar\nu^{2j+i+1}\ell_0\left(\vf'\circ F_0^k 
\langle \nabla \cL_{\widehat\bF_0}^{i}\left[\ln(1-s)+\ln\bar\nu\right] h_0, (0,\vf,\vf')\rangle\right)\\
&\phantom{=}+\sum_{k=0}^\infty\sum_{i=0}^\infty\bar\nu^{2k+i}\ell_0\left(\vf'\circ F_0^k\partial_s
\langle \nabla \cL_{\widehat\bF_0}^{i}\left[\ln(1-s) +\ln\bar\nu\right]h_0, (0,\vf,\vf')\rangle\right).
\end{split}
\]
To continue note that
\begin{equation}\label{eq:useful?}
\begin{split}
&\ell_0\left(\vf'\circ F_0^k \langle \nabla \cL_{\widehat\bF_0}^{i}\left[\ln(1-s)+\ln\bar\nu\right] h_0, (0,\vf,\vf')\rangle\right)\\
&=-\tilde\mu^i\ell_0\left(\left[\partial_y(\vf'\circ F_0^k \vf)\right]\circ F_0^i
\left[\ln(1-s)+\ln\bar\nu\right] h_0\rangle\right)\\
&\phantom{=}+\ell_0\left(\vf'\circ F_0^k \vf'\partial_s \cL_{\widehat\bF_0}^{i}\left[\ln(1-s)+\ln\bar\nu\right] h_0\right)\\
&=-\ell_0\left(\vf'\circ F_0^k \vf'\bar\nu^{2i+1} \cL_{\widehat\bF_0}^{i} h_0\right)=-\bar\nu^{i+1}\mu_{e,0}(\vf'\circ F_0^k\vf').
\end{split}
\end{equation}
Also,
\[
\begin{split}
&\ell_0\left(\vf'\circ F_0^k\partial_s\langle \nabla \cL_{\widehat\bF_0}^{i}\left[\ln(1-s) +\ln\bar\nu\right]h_0, (0,\vf,\vf')\rangle\right)\\
&=\ell_0\left(\left\{\partial_y\left[\vf\cdot\vf'\circ F_0^k\right]\right\}(\bar\nu^2 \cL_{\widehat\bF_0})^{i}\partial_s\left[\ln(1-s) +\ln\bar\nu\right]h_0\right)\\
&\phantom{=}+\ell_0\left(\vf'\vf'\circ F_0^k\partial_s^2 \cL_{\widehat\bF_0}^{i}\left[\ln(1-s) +\ln\bar\nu\right]h_0\right).
\end{split}
\]
Next, remark that if $g(x,y,\bar s)=0$, then
\begin{equation}\label{eq:second-s}
\partial_s^2 \cL_{\widehat\bF_0}^{i}g(x,y,\bar s)=\bar \nu^{4i} \left[\cL_{\widehat\bF_0}^{i}\partial_s^2 g\right](x,y,\bar s),
\end{equation}
hence
\[
\begin{split}
&\ell_0\left(\vf'\circ F_0^k\partial_s\langle \nabla \cL_{\widehat\bF_0}^{i}\left[\ln(1-s) +\ln\bar\nu\right]h_0, (0,\vf,\vf')\rangle\right)\\
&=\bar\nu^{2+3i}\frac{1+\bar\nu^{2}}{1-\bar\nu^2}\bar\mu_{e,0}\left(\vf'\vf'\circ F_0^k\right).
\end{split}
\]
To further simplify the above expression, note the $\bar\mu_{e,0}$, when applied to functions that do not depend on $s$, is just Lebesgue measure. In addition, for each zero average function $g\in\cC^1(\bT^2,\bR)$ and $g_1\in\cC^1(\bT^2,\bR)$ such that $\partial_y g=\partial_y g_1=0$ we have that, for all $k>0$,
\begin{equation}\label{eq:no-cobo}
\int_{\bT^2} g\circ F_0^k \cdot g_1 =0.
\end{equation}
Indeed, calling $G$ the primitive of $g$, we have
\[
0=\int_{\bT^2}g_1\partial_y(G\circ F_0^k)=  \langle DF^k_0 e_2,e_1\rangle\int_{\bT^2}g_1 \cdot g\circ F_0^k,
\]
from which the claim follows since $ \langle DF^k_0 e_2,e_1\rangle\neq 0$.

Collecting all the above it follows
\begin{equation}\label{eq:first-term}
\begin{split}
&\tilde\mu\ell_0\left(\cL_{\widehat\bF_0}'(\tilde\mu-\cL_{\widehat\bF_0})^{-1}\langle \nabla (\tilde\mu-\cL_{\widehat\bF_0})^{-1}\left[\ln(1-s) +\ln\bar\nu\right]h_0, (0,\vf,\vf')\rangle\right)\\
&=\frac{\bar\nu^2}{(1-\bar\nu^2)^2}\Leb((\vf')^2).
\end{split}
\end{equation}
We can now compute the second term of \eqref{eq:Gamma2}. Recalling \eqref{LF-deriv}, \eqref{eq:s-deriv2} we have
\[
\begin{split}
&\ell_0\left(\langle \nabla\cL_{\widehat\bF_0}^{k-j-1}\cL_{\widehat\bF_0}'\cL_{\widehat\bF_0}^j\left[\ln(1-s) +\ln\bar\nu\right]h_0, (0,\vf,\vf')\rangle\right)\\
&=\ell_0\left(\vf' \partial_s\cL_{\widehat\bF_0}^{k-j}\left[ \vf'\partial_s\cL_{\widehat\bF_0}^j\left[\ln(1-s) +\ln\bar\nu\right]h_0\right]\right)\\
&=-\sum_{i=0}^{k-j-1}\bar\nu^{2i+1}\ell_0\left(\vf'\circ F_0^{k-j}\vf'\partial_s\cL_{\widehat\bF_0}^j\left[\ln(1-s) +\ln\bar\nu\right]h_0\right)\\
&\phantom{= \;}+\bar\nu^{k-j}\ell_0\left(\vf'\circ F_0^{k-j}\vf'\partial_s^2\cL_{\widehat\bF_0}^{j}\left[\ln(1-s) +\ln\bar\nu\right]h_0\right)\\
&=\bar\nu^{j+2}\frac{1+\bar\nu^2}{1-\bar\nu^2}\Leb((\vf')^2)\delta_{kj},
\end{split}
\]
where in the last line we have used  \eqref{eq:second-s}. Thus
\begin{equation}\label{eq:second-term}
\begin{split}
\sum_{k=0}^\infty\sum_{j=0}^{k-1}\tilde\mu^{-k}\ell_0\left(\langle \nabla\cL_{\widehat\bF_0}^{k-j-1}\cL_{\widehat\bF_0}'\cL_{\widehat\bF_0}^j\left[\ln(1-s) +\ln\bar\nu\right]h_0, (0,\vf,\vf')\rangle\right)=0.
\end{split}
\end{equation}
The third term reads, setting $\eta=(0,\vf,\vf')$,
\begin{equation}\label{eq:third-term}
\begin{split}
&\tilde\mu\ell_0\left(\langle \nabla (\tilde\mu-\cL_{\widehat\bF_0})^{-1}\left[\ln(1-s) +\ln\bar\nu\right](\tilde\mu-\cL_{\widehat\bF_0})^{-1}\cL_{\widehat\bF_0}\langle\nabla h_0, \eta\rangle, \eta\rangle\right)\\
&=\sum_{k=0}^\infty\sum_{j=1}^\infty\bar\nu^{j+k}\ell_0\left(\vf'\partial_s\cL_{\widehat\bF_0}^{k}\left[\ln(1-s) +\ln\bar\nu\right]
\cL_{\widehat\bF_0}^j(f'\vf')\right)=0.
\end{split}
\end{equation}
The fourth term can be computed analogously yielding as well
\begin{equation}\label{eq:fourth-term}
\begin{split}
\ell_0'\left(\ln(1-s)(\tilde\mu-\cL_{\widehat\bF_0})^{-1}\cL_{\widehat\bF_0}\langle\nabla h_0, (0,\vf,\vf')\rangle\right)=0.
\end{split}
\end{equation}
For the fifth term we must compute
\begin{equation}\label{eq:fifth-term}
\begin{split}
&\tilde\mu^{-k}\ell_0\left(\ln(1-s)\cL_{\widehat\bF_0}^{k-j-1}\cL_{\widehat\bF_0}'\cL_{\widehat\bF_0}^j \langle\nabla h_0, (0,\vf,\vf')\rangle\right)\\
&=\bar\nu^j\ln \bar\nu^{-1} \ell_0(\vf'\partial_s\cL_{\widehat\bF_0}^j \vf'f')=\bar\nu^{j+2}\ln\bar\nu^{-1}\frac{1-\bar\nu^{2j}}{(1-\bar\nu^2)^2}\Leb(\vf'\vf'\circ F_0^j)=0,
\end{split}
\end{equation}
where we used \eqref{eq:f-deriv} and \eqref{eq:no-cobo}.

At last we consider the sixth term
\begin{equation}\label{eq:sixth-term}
\begin{split}
&\ell_0\left(\ln(1-s)(\tilde\mu-\cL_{\widehat\bF_0})^{-1}\cL_{\widehat\bF_0}\langle\nabla h'_0, (0,\vf,\vf')\rangle\right)\\
&=\ln\bar\nu^{-1}\sum_{k=1}^\infty\ell_0(\vf' \partial_s (\tilde\mu-\cL_{\widehat\bF_0})^{-1}\cL_{\widehat\bF_0}f'\vf')=0.
\end{split}
\end{equation}
Collecting equations \eqref{eq:first-term}, \eqref{eq:second-term}, \eqref{eq:third-term}, \eqref{eq:fourth-term}, \eqref{eq:fifth-term} and \eqref{eq:sixth-term} yields
\[
\overline\mu_{e,\param}(\ln\nu_{\param,1}^{-1})=\htop+ \frac{\bar\nu^2\int(\vf')^2}{(1-\bar\nu^2)^2}\param^2+\cO(\param^3)
\]
from which the claim follows.
\end{proof}

To conclude the section let us note a property of $a_\param$ which is useful to check the correctness of the subsequent computations.
\begin{lem}\label{lem:zeroav} For al $\param\in\bR$ small enough and  all $t\geq 0$, we have
\[
\int_{\bT^2}a_\param(t,\xi)d\xi=0.
\]
 \end{lem}
 \begin{proof}
By \eqref{eq:diff-reg}, there exists $\omega_\param$, with $\|\omega_\param\|_\infty\leq \Const$, such that
\begin{equation}\label{eq:p-omega}
p_\param=\param\omega_\param.
\end{equation}
Note that
\[
\int_{\bT^2}\langle V_\param, p_\param\rangle\circ \phi_{\param,t} \,b_\param(t) d\xi=\int_{\bT^2}\langle V_\param, p_\param\rangle\circ \phi_{\param,t}\det(D_\xi\phi_{\param,s}) d\xi=\int_{\bT^2}\langle V_\param, p_\param\rangle d\xi.
\]
On the other hand, recalling \eqref{eq:V-deriv},
\[
0=\int_{\bT^2}\partial_{\xi_k} V_{\param,j}(\xi) d\xi=\int_{\bT^2} V^\perp_{\param,j}(\xi) p_{\param,k}(\xi) d\xi
\]
implies
\[
\int_{\bT^2} V_{\param,j}(\xi) \omega_{\param,k}(\xi) d\xi=0
\]
for all $k,j$. Hence,
\[
\int_{\bT^2}\langle V_\param, p_\param\rangle\circ \phi_{\param,t} \,b_\param(t) d\xi=0.
\]
Hence, using \eqref{eq:aderivative}, we can write
\[
a_\param(\xi,t)=\param\int_0^t \langle V_\param, \omega_\param\rangle\circ \phi_{\param,s}(\xi) \,b_\param(\xi,s) ds,
\]
which, by Fubini, concludes the Lemma.
\end{proof}

\subsubsection{\bfseries Characterization of the obstruction: conclusion}\label{sec:obstruction}\ \newline
We can now  continue our estimate left at \eqref{eq:akbar2}. As already mentioned the first problem is to investigate the prefactor. Using  \eqref{eq:bH1-def} and \eqref{eq:noncisicrede}, we can write
\[
\begin{split}
&\bH^1_{F^{l n_*}(x),\chi_*\bF^{l n_*}_*(\bv,0)}(\bh^*_\param)=\int_{\bR}\langle \bh^*_\param(\phi_{\param,s}(z_l),\bar s_\param\circ \phi_{\param,s}(z_l)), D_{z_l}\phi_{\param,s}D_xF_\param^{l n_*}\bv\rangle \chi_*(z_l, s) ds\\
&=\int_{\bR}\frac{\bar h_{2,\param}}{\varsigma(\bar s_\param)\langle V_\param,V_{+,\param}^\perp\rangle}\circ\phi_{\param,s}(z_l))\langle V_{+,\param}^\perp(\phi_{\param,s}(z_l)), D_{z_l}\phi_{\param,s}D_xF_\param^{l n_*}\bv\rangle \chi_*(z_l, s) ds,
\end{split}
\]
where we used the notation $z_l=F^{n_*l}(x)$.
To continue, we need the following.
\begin{lem}\label{lem:derivativephi}
For each $t\in( 0,1)$ it holds true that
\[
\begin{split}
\langle V_{+,\param}^\perp\circ\phi_{\param,t}, D\phi_{\param,t}V_{+,\param}\rangle=&\param\langle \be, V^\perp\rangle\langle e_1,V\rangle\sum_{k=1}^\infty \bar\nu^{2k+1}\left[\vf'\circ F_\param^{-k}\circ \phi_{\param,t}-\vf'\circ F_\param^{-k}\right]\\
&+\cO(\param^2).
\end{split}
\]
\end{lem}
\begin{proof}
Since $\|V_{+,\param}\|=1$, we have $\partial_{x_k}V_{+,\param}=\param V_{+,\param}^\perp \omega^+_{\param,k}$ for some vector function $ \omega^+_{\param}$.
It turns out to be convenient to take the derivative in the flow direction. Doing so we have, recalling \eqref{eq:deriv-cont}, \eqref{eq:p-omega},
\[
\begin{split}
\frac{d}{ds}\langle V_{+,\param}^\perp\circ\phi_{\param,s}, D\phi_{\param,s}V_{+,\param}\rangle=&-\param\langle V_{+,\param}\circ\phi_{\param,s}, D\phi_{\param,s}V_{+,\param}\rangle \langle \omega^+_\param,V_\param\rangle \circ \phi_{\param,s}\\
&+\param\langle V_{+,\param}^\perp, V_{\param}^\perp\rangle\circ\phi_{\param,s}\langle \omega_\param\circ \phi_{\param,s},V_{+,\param}\rangle\\
=&-\param \langle \omega^+_\param,V_\param\rangle \circ \phi_{\param,s}+\cO(\param^2).
\end{split}
\]
We are left with the task of computing $\omega^+_{\param}$. This is done as in \eqref{eq:odio0}:
\[
\begin{split}
&(\partial_{x_k}DF_\param) V_{+,\param}+\param \omega^+_{\param,k} DF_\param \hat V_{+,\param}^\perp=\partial_{x_k} \nu^u_{\param,1}  V_{+,\param}\circ F_\param\\
&\phantom{(\partial_{x_k}DF) V+p_{k} DF \hat V^\perp=}
+\param \nu^u_{\param,1} \sum_j \partial_{x_k}F_{\param,j}\omega^+_{\param,j}\circ F_\param\hat V_{+,\param}^\perp\circ F_\param.
\end{split}
\]
Which, remembering footnote \ref{foo:nuu} and multiplying the above by $V_{+,\param}^\perp\circ F_\param$ , yields
\[
-\vf'' e_1\langle V_{+,\param}^\perp, e_1\rangle \langle \be, V_{+,\param}\rangle+\omega^+_\param\langle V_{+,\param}^\perp\circ F_\param, DF_\param V_{+,\param}^\perp\rangle=\nu_{\param,1}^{u}(DF_\param)^*\omega^+_\param\circ F_\param.
\]
Note that,
\[
\langle V_{+,\param}^\perp\circ F_\param, DF_\param V_{+,\param}^\perp\rangle=\frac{\nu_\param\langle V_{+,\param}^\perp,V_\param\rangle\circ F_\param}{\langle V_{+,\param}^\perp,V_\param\rangle}.
\]
Thus, 
\[
\begin{split}
&\omega^+_\param=\sum_{k=1}^\infty \frac{\nu_{\param,k}\circ F_\param^{-k}\langle V_{+,\param}^\perp,V_\param\rangle}{\nu_{\param,k}^u\circ F_\param^{-k}\langle V_{+,\param}^\perp,V_\param\rangle\circ F_\param^{-k}}
(DF_\param^{-k})^*\Xi\circ F_\param^{-k}\\
&\Xi=-(\nu_{\param,1}^u)^{-1}\langle V_{+,\param}^\perp,\nabla\vf'\rangle\langle \be, V_{+,\param}\rangle e_1=-\bar\nu\langle V_{\param},\nabla\vf'\rangle\langle \be, V^\perp\rangle e_1+\cO(\param).
\end{split}
\]
Accordingly, remembering \eqref{eq:flowderrep} and \eqref{eq:b-def},
\[
\begin{split}
\frac{d}{ds}\langle V_{+,\param}^\perp\circ\phi_{\param,s}, D\phi_{\param,s}V_{+,\param}\rangle=&\param\sum_{k=1}^\infty \bar\nu^{2k+1}\langle V_\param,\nabla (\vf'\circ F_\param^{-k})\rangle\circ \phi_{\param,s}\langle \be, V^\perp\rangle\langle e_1,V\rangle +\cO(\param^2)\\
=&\param\langle \be, V^\perp\rangle\langle e_1,V\rangle\sum_{k=1}^\infty \bar \nu^{2k+1}\frac d{ds}\vf'\circ F_\param^{-k}\circ \phi_{\param,s}+\cO(\param^2)
\end{split}
\]
from which the Lemma follows by integration.
\end{proof}
Using the Lemma \ref{lem:derivativephi} and recalling Remark \ref{rem:measure0} we have
\[
\begin{split}
&\bH^1_{F^{l n_*}(x),\chi_*\bF^{l n_*}_*(\bv,0)}(\bh^*_\param)=\cO(\nu_{\param,l n_*}\|\bv\|+\param^2\nu^{-1}_{\param,ln_*}\|\bv\|)+\param\frac{\langle \be, V^\perp\rangle\langle e_1,V\rangle\langle V^\perp,\bv\rangle}{\nu_{\param,ln_*}\varsigma(\bar s)}\\
&\times \sum_{k=1}^\infty \bar\nu^{2k+1}\int_{\bR}h_{2,\param}(\phi_{\param,s}(z_l))\left[\vf'\circ F_\param^{-k}\circ \phi_{\param,s}(z_l)-\vf'\circ F_\param^{-k}(z_l)\right]\chi_*(z_l, s) ds
\end{split}
\]
It is convenient to define
\begin{equation}\label{eq:Omega}
\begin{split}
\Omega^\dagger(z,\bv)=&\frac{\langle \be, V^\perp\rangle\langle e_1,V\rangle}{\varsigma(\bar s)}\sum_{k=1}^\infty \bar\nu^{2k+1}\int_{\bR}\Xi^*_{\param,k}(z,s)\chi_*(z, s) ds\\
&+\cO(\nu_{\param,l n_*}^2\|\bv\|+\param\|\bv\|)\langle V^\perp,\bv\rangle^{-1}\\
\Xi^*_{\param,k}(z,s)=&h_{2,\param}(\phi_{\param,s}(z))\left[\vf'\circ F_\param^{-k}\circ \phi_{\param,s}(z)-\vf'\circ F_\param^{-k}(z)\right].
\end{split}
\end{equation}
The function $\Omega^\dagger$ is generically not identically zero for large $l$. To see it just consider the case in which $x$ is a periodic point, say $x=0$ (hence $z_l=0$), then any perturbation that leaves $\vf'(0)$ invariant  but changes the value in a neighborhood will change the value of the integral. On the other hand, for $l\geq \Const \ln\param^{-1}$ the integral is the dominating term in the above expression.

We can finally write, for $\langle V^\perp,\bv\rangle\neq 0$,
\begin{equation}\label{eq:prefactor}
\bH^1_{F^{l n_*}(x),\chi_*\bF^{l n_*}_*(\bv,0)}(\bh^*_\param)=\param\nu^{-1}_{\param,ln_*}\langle V^\perp,\bv\rangle\Omega^\dagger(z_l,\bv).
\end{equation}
Hence \eqref{eq:akbar2} becomes
\begin{equation}\label{eq:akbar3}
\begin{split}
&\langle V^\perp, \nabla \oH_{\param, T}(g)(x) \rangle =\param^2\nu^{-1}_{\param,n_T}\mu_\param^{n_T}\Omega_*^\dagger(x)\cO_{2,\gamma}(g)\\
&\phantom{\langle \bv (x), \nabla \oH_{\param, T}(g)(x) \rangle =}
+\cO(\|g\|_{\cC^r}\|\bv\|\bar\nu^{-n_T}e^{\const \param n_T}),\\
&\Omega^\dagger_*(x)=\nu_{\param,n_T}\mu_\param^{- n_T}\sum_{l=1}^{K_T} \Omega^\dagger(F_\param^{n_*l}(x), V^\perp)(\nu^{-1}\mu_\param)^{ln_*}.
\end{split}
\end{equation}
Our last task it to show that the term $\cO_{2,\gamma}(g)=\param^{-1}\Theta_{\infty,\param}^*$ in equation \eqref{eq:akbar3} does not blow up when $\param\to 0$.

Recalling \eqref{eq:stop}, \eqref{eq:gamma-def} and setting $\pf(s)=(1+s)^2+(2+s)^2$ we have
\[
\begin{split}
&\hAcc\circ\theta\circ \widehat\bF_\param^{-1}(x,y,s)=\frac{1+s^2}{\| D_xF_\param^{-1} (1,s)\|^2}\\
&\phantom{\hAcc\circ\theta\circ \widehat\bF_\param^{-1}(x,y,s)}=\frac{1+s^2}{2-6s+5s^2}-\param\frac{2(1+s^2)(2s-1)(1-s)}{[2-6s+5s^2]^2}\vf'(x)+\cO(\param^2)\\
&\phantom{\hAcc\circ\theta\circ \widehat\bF_\param^{-1}(x,y,s)}=A_0(s)+\param A_1(s)\vf'(x)+\cO(\param)\\
&\hBcc\circ\theta(x,y,s)=-\param\vf''(x)\frac{3-2\param\vf'(x)+2s}{(1-\param\vf'(x)+s)^2+(2-\param\vf'(x)+s)^2}e_1\\
&\phantom{\hBcc\circ\theta(x,y,s)}=-\param\frac{\vf''(x)[3+2s]}{Q(s)}e_1-2\param^2\frac{(\vf'(x)^2)'[2+3s+s^2]}{Q(s)^2}e_1+\cO(\param^3)\\
&\phantom{\hBcc\circ\theta(x,y,s)}=\param B_0(s)\vf''(s)e_1+\param^2 B_1(s)(\vf'(x)^2)' e_1+\cO(\param^3)\\
&\hDcc\circ \theta(x,y,s)=\frac{3(s^2+s-1)}{Q(s)}+\gamma\vf'(x)\frac{4(s^4+3s^3+3s^2+3s+2)}{Q(s)^2}+\cO(\gamma^2)\\
&\phantom{\hDcc\circ\theta(x,y,s)}=E_0(s,\param)+\param \vf'(x)E_1(s)+\cO(\gamma^2)\\
&\Gamma\circ \theta(x,s)= -\param \vf''(x)\varsigma(s)^{-2}e_1.
\end{split}
\]
Note, for future reference, that $A_0(\bar s)=\bar \nu^2$, $E_0(\bar s, \param)=0$.

Thus, remembering \eqref{eq:slick},  our assumption that $\varsigma g\circ\bpi$ belongs to the kernel of $\ell_\param$ and equations \eqref{eq:derivmu}, \eqref{LF-deriv},\eqref{eq:barpi} we have
\begin{equation}\label{eq:pallagalattica0}
\Theta_{\infty,\param}^*=\param \Delta_1+\cO(\param^2\|g\|_{\cC^r})
\end{equation}
where
\begin{equation}\label{eq:pallagalattica1}
\begin{split}
\Delta_1=&\sum_{k=0}^{\infty}\bar\nu^{3k+1}\frac{(1-\bar s)\varsigma'(\bar s)}{\varsigma(\bar s)}\Leb\left(\vf''\circ F_0^{k} g\right)\\
&-\sum_{k=0}^{\infty}\bar\nu^{k+2}(1-\bar s)\frac{1-\bar \nu^{2k}}{1-\bar \nu^2}\Leb(\vf''\circ F_0^{k} g)\\
&-\sum_{j=0}^{\infty}\bar\nu^{k} \frac{3+2\bar s}{(1+\bar s)^2+(2+\bar s)^2}\Leb( \vf''\circ F_0^{k}  g).
\end{split}
\end{equation}
Note that, since $\varsigma'(\bar s)/\varsigma(\bar s)=-\bar\nu/(1-\bar \nu^2)$ and $\bar s^2+\bar s-1=0$, $\Delta_1=0$, hence 
\begin{equation}\label{eq:sobeit}
\cO_{2,\param}(g)=\cO(\param\|g\|_{\cC^r}).
\end{equation}
\begin{rem} We could have reached the same conclusion by using equations \eqref{eq:atlast}, \eqref{eq:akbar1}. Also, using the above formulae and \eqref{eq:atlast} one can check that also $\partial_\param\cO_{2,\gamma}|_{\param=0}$ is a measure (and, with a lot more work, one could compute it). This lends credibility to the possibility $\cO_{2,\param}\equiv 0$.
\end{rem}

\appendix
\section{A little classification}\label{sec:classy}
Here we provide the proof of a partial classification of the flows that satisfy our conditions.
\begin{proof}[{\bf Proof of Lemma \ref{lem:classify}}]
The map $F$ is topologically conjugated to a linear automorphism \cite[Theorem 18.6.1]{KH}. Such conjugation shows that the flow is topologically orbit equivalent to a rigid rotation. Hence one can chose a global Poincar\`e section and the associated Poincar\`e map. Such a map will have a rotation number determined by the foliation of the total automorphism, which a straightforward computation shows to have the claimed property.

Conversely, if $\phi_t$ has no fix points nor periodic orbits, then  there exists a global section uniformly transversal to the flow (see \cite{Siegel52} for the original work, or \cite{GiorgilliMarmi10} for a brief history of the problem and references) and the associated Poincar\`e map is  a $\cC^{1+\alpha}$ map of the circle with irrational rotation number $\omega$. To claim that the Poincar\`e map is conjugated to a rigid rotation requires however some regularity. In particular, if $\alpha\geq 1$, then Denjoy Theorem \cite[Theorem 12.1.1]{KH} implies that the Poincar\`e map is topologically conjugated to a rigid rotation. If $\omega$ is Diophantine, then for $\alpha\geq 2$ it is possible to show that the conjugation is $\cC^{\beta}$ for all $\beta<\alpha$, \cite[Th\'eor\`em fundamental, page 8]{Herman}. Then, if $\omega$ satisfies property \eqref{eq:diofanto}, we can view a linear foliation
as the stable foliation of a toral automorphism. We then obtain a $\cC^{\beta}$ Anosov map with the wanted properties by conjugation.
\end{proof}

\section{Anisotropic Banach spaces: distributions} \label{sec:norms}

In this section we first construct the Banach spaces used in Section \ref{sec:growth}, then we discuss the relation with the Banach spaces constructed in \cite{GouezelLiverani08}, finally, we prove Proposition \ref{th:base} and show that $\bH$ is a bounded functional.

The construction of the Banach spaces are based on the definition of appropriate norms. The Banach spaces are then obtained by closing $\cC^r(\Omega,\bC)$ with respect to such norms.\footnote{ We consider complex valued functions because we are interested in having  nice spectral theory.} The basic idea is to control not the functions themselves but rather their integrals along  curves close to the stable manifolds. Hence the first step is to define the set of relevant curves.\footnote{ In fact, in the simple case at hand, we could consider directly pieces of stable manifolds. We do not do it to make easier to use already existing results.} To do so we need to fix $\delta\in (0,1/2)$ and $K\in\bRp$.

\begin{defin}[Admissible leaves] \label{def:leaves}
Given $r\in\bRp$, an admissible leave $W\subset \bT^2$ is a $\cC^r$ curve with length in the interval $[\delta/2,\delta]$. We require that there exists a parametrization $\omega:[0,1]\to W$ of such a curve such that $\omega'(\tau)\in C^s(\omega(\tau))$, for all $\tau\in [0,1]$, and $\|\omega\|_{\cC^r([0,1],\bT^2)}\leq K$. Moreover we ask $(\omega(\tau),\omega'(\tau)\|\omega'(\tau)\|^{-1})\in\Omega$, that is the curves have all the chosen orientation.
We call $\Sigma$ the set of admissible curves where to any $W\in \Sigma$ is associated a parameterization $\omega_W$ satisfying the properties mentioned above.
\end{defin}
The above set is not empty as it contains pieces of stable manifolds, provided $K$ has been chosen large enough, since the stable manifolds are uniformly $\cC^r$, \cite{KH}.
The basic fact about admissible curves is that if $W\in \Sigma$, then, for each $n\in\bN$, $F^{-n}W\subset\cup_{i=1}^{N_n}W_i$ for some finite set $\{W_i\}_{i=1}^{N_n}\subset \Sigma$. This is quite intuitive but see \cite{GouezelLiverani06} for a detailed proof in a more general setting.

Next, we define the integral of an element $\bg\in \cC^r(\Omega,\bC)$ along an element $W\in\Sigma$ against any $\vf\in\cC^0(W,\bC)$:
\begin{equation}\label{eq:int-prelim}
\int_W\vf \bg:=\int_0^1 ds\, \vf\circ \omega_W(s)\cdot \bg(\omega_W(s),\omega_W'(s)\|\omega_W'(s)\|^{-1})\|\omega_W'(s)\|.
\end{equation}

Also, given  $W\in\Sigma$ and $\vf : W \to \bR$  we set, for all $s\leq r$, 
\begin{equation} \label{eq:normovercurve}
 \| \vf \|_{\cC^s(W,\bR)} \doteq \| \vf \circ \omega_W \|_{\cC^s([0,1],\bR)} .   
\end{equation}

We are now ready to define the relevant semi-norms:\footnote{ By $\cC^s_0(W,\bC)$ we mean the $\cC^s$ functions with support contained in $\textrm{Int}(W)$. The fact that the test functions must be zero at the boundary of $W$ is essential for the following arguments.}
\begin{equation}\label{eq:def-norms}
\|  \bg\|_{p,q}  := \sup_{W \in \Sigma} \sup_{|\alpha| \leq p} \; \sup_{\|\varphi\|_{ \cC_0^{q+|\alpha|}(W,\bC)}\leq 1}
 \int_{W}  \varphi \cdot \partial^\alpha (\bg),
\end{equation}
where $\alpha=(\alpha_1,\alpha_2,\alpha_3)$ is the usual multi--index and $1,2$ refer to the $x$ co-ordinate while $3$ refers to $v$.\footnote{ To be more explicit, if we choose a chart $v=(\cos\theta, \sin\theta)$, then $\alpha_3$ refers to the derivative with respect to $\theta$.} It is easy to check that the $\| \cdot \|_{p,q} $ are indeed semi-norms on $\cC^r(\Omega,\bC)$.
\begin{defin}[$\cB^{p,q}$ spaces]
Let $p \in \bN^*, q \in \bR$, $p+q\leq r$ and $q>0$.   We define $\cB^{p,q}$ to be the closure of $\cC^r(\Omega,\bC)$ with respect to the semi-norm $\|\cdot\|_{p,q}$.\footnote{ To be precise the elements of $\cB^{p,q}$ are the equivalence classes determined by the equivalence relation $h\sim \bar h$ if and only if $\|h-\bar h\|_{p,q}=0$.}
\end{defin}
\begin{rem} Note that $\|\bg\|_{p,q}\leq \|\bg\|_{\cC^{p+q}}$.
\end{rem}
The Banach spaces defined above are well suited for the tasks at hand but, unfortunately, they are not exactly the one introduced in \cite{GouezelLiverani08} where a more general theory is put forward. To avoid having to develop the theory from scratch, it is convenient to show how to relate the present setting to the one in \cite{GouezelLiverani08}. To this end let us briefly recall the construction in \cite{GouezelLiverani08}, then we will explain the relation with the present one. This will allow us to apply the general results in  \cite{GouezelLiverani08} to the present context. 

We start by recalling, particularizing them to our simple situation, the basic objects used in \cite{GouezelLiverani08}: the $r$ times differentiable sections $\cS^r$ of a line bundle over the Grassmannian of one dimensional subspaces. More precisely, let $\cG=\{(x,E)\}$ where $x\in\bT^2$ and $E\subset \bR^2$ is a linear one dimensional subspace, then $h\in \cS^r$ is a $\cC^r$ map $(x,E)\to E^*$.\footnote{ To be precise, since we are going to do spectral theory, we should consider the complex dual. We do not insist on this since the complexification is totally standard.} Note that there is a strict relation between $\cS^r$ and $\cC^r(\Omega,\bC)$: for each $(x,v)\in\Omega$ let $E_v=\{\mu v\}_{\mu\in\bR}$, then for each $h\in\cS^r$ define 
${\boldsymbol i}:\cS^r\to\cC^r(\Omega,\bC)$ by
\begin{equation}\label{eq:conjugate}
[{\boldsymbol i}h](x,v)= h(x,E_v)(v).
\end{equation}
The important fact is that the elements of $\cS^r$, when restricted to the tangent bundle of $W$, are volume forms on $W$, hence can be integrated. Let us be explicit: given $W\in\Sigma$, $h\in\cS^r$ and $\vf\in\cC^0(\bT^2,\bC)$, by \eqref{eq:int-prelim} and \cite[Section 2.2.1]{GouezelLiverani08} we have
\begin{equation}\label{eq:integrate-form}
\int_W\vf h:=\int_0^1 ds\, \vf\circ \omega_W(s) \,h(\omega_W(s),E_{\omega_W'(s)})(\omega_W'(s))=\int_W\vf\, {\boldsymbol i}h.
\end{equation}
Finally, note that the norm in \cite{GouezelLiverani08} is also given by integrals along curves in $\Sigma$. Accordingly, if $h,\tilde h\in\cS^r$ differ only for $(x,E)$ such that $E$ does not belong to $C^s(x)$, then any norm of the difference based on integrations along curves in $\Sigma$ will be zero. The readers can then check that the norms defined in \cite{GouezelLiverani08} are equivalent to $\|{\boldsymbol i} h\|_{p,q}$. Thus ${\boldsymbol i}$ extends, by density, to a Banach space isomorphism between the spaces defined in \cite{GouezelLiverani08} and the $\cB^{p,q}$ presently defined.\footnote{ Note that, not by chance, the Banach spaces in \cite{GouezelLiverani08} are named similarly: $\cB^{p,q,1}$. The superscript $1$ refers there to the fact that, as we will see briefly, in the present language we do not need to have a weight in the transfer operator.} Finally, we have to understand how the operator $\cL_\bF$ reads in the corresponding language of  \cite{GouezelLiverani08}.
To this end it is useful to introduce the operator $\Xi:\cC^r(\Omega,\bC)\to \cC^r(\Omega,\bC)$ defined by
\[
(\Xi\bg)(x,v):=\bg(x,v)\|V(x)\|.
\]
Note that, by the assumptions of Definition \ref{def:flowmapsetup}, $\Xi$ is invertible and both the operator and its inverse can be extended to a continuous operator on $\cB^{p,q}$. It then follows by equations \eqref{eq:int-prelim}, \eqref{eq:integrate-form}, \eqref{eq:new-to}, and \cite[Section 3.2]{GouezelLiverani08} that, for all $W\in\Sigma$ and $\vf\in\cC^0(\bT^2,\bC)$, we have
\[
\begin{split}
&\int_W\vf\, {\boldsymbol i}^{-1}\Xi^{-1}\cL_{\bF}\Xi{\boldsymbol i}h=\int_0^1 ds\,\vf(\omega_W(s))(\Xi^{-1}\cL_{\bF}\Xi{\boldsymbol i}h)(\omega_W(s), \widehat\omega'_W(s))\|\omega'_W(s)\|\\
&=\int_0^1 ds\,\vf(\omega_W(s))({\boldsymbol i}h)\left(F^{-1} \omega_W(s),\frac{D_{\omega_W(s)}F^{-1}\omega'_W(s)}{\|D_{\omega_W(s)}F^{-1}\omega'_W(s)\|}\right) \|D_{\omega_W(s)}F^{-1} \omega'_W(s))\|\\
&=\int_{F^{-1}W}\vf\circ F\, {\boldsymbol i}h=\int_{F^{-1}W}\vf\circ F\, h=\int_W\vf F_*h,
\end{split}
\]
where we used the notation $ \widehat\omega'_W(s)= \omega'_W(s)\|\omega'_W(s))\|^{-1}$. Hence we conclude that
\begin{equation}\label{eq:operatros-iso}
{\boldsymbol i}^{-1}\Xi^{-1}\cL_{\bF}\Xi{\boldsymbol i}h=F_*h:=(F^{-1})^*h,
\end{equation}
that is $\cL_\bF$ is conjugated to the push-forward of $F$ on $\cS^r$. 

\begin{proof}[{\bf Proof of Proposition \ref{th:base}}]
Since \eqref{eq:operatros-iso} states that our operator is conjugated to the push-forward $F_*$, all the spectral properties of $F_*$, acting on $\cB^{p,q,1}$, and $\cL_\bF$, acting on $\cB^{p,q}$, coincide.
It thus suffices to note that \cite[Proposition 4.4, Theorem 5.1, Theorem 6.4]{GouezelLiverani08}  state that, for $q\in\bRp$, $p\in \bNp$ and $p+q\leq r$, $F_*$ can be extended continuously to $\cB^{p,q,1}$, that the logarithm of the spectral radius of $F_*$ is given by the topological entropy (which is the maxim of the metric entropy), that the maximal eigenvalue is simple and $F_*$ has a spectral gap and the essential spectral radius is bounded by $e^{\htop}\lambda^{-\min\{p,q\}}$. 
\end{proof}
We have thus seen that the operator $\cL_{\bF}$ acts very nicely on the spaces $\cB^{p,q}$. The next important fact is  that the functionals we are interested in are well behaved on such spaces.
\begin{lem}\label{lem:boundedfunctional}
There exists $C>0$ such that, for each $x\in\bT^2$, $q\in\bR_>$, $p\in \bN_>$, $p+q\leq r$, and $\vf\in\cC_0^r(\bRpe,\bR)$, $\bg\in\cC^r(\Omega, \bR)$ we have
\[
\left|\bH_{x, \vf}(\bg)\right| \leq C|\supp\vf|\|\bg\|_{p,q}\|\vf\|_{\cC^{p+q}}.
\]
\end{lem}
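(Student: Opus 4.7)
The plan is to decompose $\bH_{x,\vf}(\bg)$ into a bounded number per unit length of integrals over admissible leaves, and then invoke the definition of $\|\cdot\|_{p,q}$ with trivial multi-index $\alpha=0$.

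\medskip

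First, since $V(x)\in E^s(x)$ and $D_x\phi_tV(x)=V(\phi_t(x))$ by \eqref{eq:flipvphi}, one checks directly from the definition of $\bphi_t$ that $\bphi_t(x,\hat V(x))=(\phi_t(x),\hat V(\phi_t(x)))$. Thus the orbit of $(x,\hat V(x))$ under $\bphi_t$ lives on the graph of $\hat V$ over the flow line through $x$, which by \eqref{eq:renorm-hp} and \eqref{eq:smooth-hp} is a piece of a $\cC^r$ stable leaf, parametrised by flow-time with uniformly bounded $\cC^r$ norm (depending only on $V$ and on the regularity of the stable foliation for $F$). Because $\|V\|$ is uniformly bounded above and away from zero, one can fix $\tau_0>0$, depending only on $V$ and $\delta$, so that any flow-time interval of length exactly $2\tau_0$ is mapped by $\phi_{\cdot}(x)$ to an admissible leaf in $\Sigma$ (with the correct orientation in $\Omega$ since $\hat V$ gives the chosen orientation), provided the admissibility constant $K$ in Definition \ref{def:leaves} is chosen large enough.

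\medskip

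Next I would cover $\supp\vf$ by such intervals. Let $a_i=a_0+i\tau_0$ for $i=0,\dots,N$ with $N\leq C_0|\supp\vf|+2$, arranged so that $\supp\vf\subset[a_1,a_{N-1}]$. Let $\{\chi_i\}_{i=1}^{N-1}$ be a $\cC^r$ partition of unity on $[a_1,a_{N-1}]$ subordinate to the open cover $\{(a_{i-1},a_{i+1})\}$, with $\|\chi_i\|_{\cC^r(\bR)}\leq C$ independently of $i$. Set $W_i:=\phi_{[a_{i-1},a_{i+1}]}(x)\in\Sigma$ with parametrisation $\omega_{W_i}(s)=\phi_{a_{i-1}+2s\tau_0}(x)$, $s\in[0,1]$, and define
\[
\tilde\vf_i(y):=\frac{\vf(\omega_{W_i}^{-1}(y))\,\chi_i(\omega_{W_i}^{-1}(y))}{\|V(y)\|},\qquad y\in W_i.
\]
A change of variable $t=a_{i-1}+2s\tau_0$ in $\bH_{x,\vf\chi_i}(\bg)$, together with $\omega_{W_i}'(s)=2\tau_0 V(\omega_{W_i}(s))$ and $\omega_{W_i}'(s)/\|\omega_{W_i}'(s)\|=\hat V(\omega_{W_i}(s))$, yields
\[
\bH_{x,\vf\chi_i}(\bg)=\int_{W_i}\tilde\vf_i\,\bg.
\]
Since $\chi_i$ is compactly supported in $(a_{i-1},a_{i+1})$, we have $\tilde\vf_i\in\cC_0^q(W_i,\bR)$, and applying \eqref{eq:def-norms} with $\alpha=0$ gives $|\bH_{x,\vf\chi_i}(\bg)|\le\|\tilde\vf_i\|_{\cC^q(W_i)}\|\bg\|_{p,q}$. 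Summing $\bH_{x,\vf}(\bg)=\sum_i\bH_{x,\vf\chi_i}(\bg)$ over the $N=O(|\supp\vf|)$ pieces and using the trivial bound $\|\vf\|_{\cC^q}\le\|\vf\|_{\cC^{p+q}}$ will produce the claimed estimate, provided we can control each $\|\tilde\vf_i\|_{\cC^q(W_i)}$ by $C\|\vf\|_{\cC^q(\bR)}$ uniformly in $i$ and $x$.

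\medskip

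The only non-routine step is the last uniform estimate. By definition \eqref{eq:normovercurve}, $\|\tilde\vf_i\|_{\cC^q(W_i)}=\|\tilde\vf_i\circ\omega_{W_i}\|_{\cC^q([0,1])}$, and
\[
\tilde\vf_i\circ\omega_{W_i}(s)=\vf(a_{i-1}+2s\tau_0)\,\chi_i(a_{i-1}+2s\tau_0)\,/\,\|V(\omega_{W_i}(s))\|.
\]
By Leibniz and Faà di Bruno, this is controlled by $\|\vf\|_{\cC^q(\bR)}$ times $\|\chi_i\|_{\cC^q(\bR)}$ times $\|1/\|V(\omega_{W_i}(\cdot))\|\|_{\cC^q([0,1])}$, the last factor being uniformly bounded because $\|V\|\in\cC^r(\bT^2,\bRp)$ is nowhere zero (Definition \ref{def:flowmapsetup}) and $\omega_{W_i}$ has uniformly bounded $\cC^r$ norm. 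Together with $\tau_0=O(1)$ and the uniform bound on $\|\chi_i\|_{\cC^q}$, this yields the wanted uniform estimate and completes the argument. The only mild obstacle is bookkeeping the constants through the change of variable and the partition of unity; there is no conceptual difficulty.
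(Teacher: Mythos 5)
Your proof is correct and takes essentially the same approach as the paper: re-interpret the flow-time integral as an integral over an admissible leaf (using $\bphi_t(x,\hat V(x))=(\phi_t(x),\hat V(\phi_t(x)))$ and the flow parametrisation $\omega_W(s)=\phi_{\cdot}(x)$), bound it by the $\|\cdot\|_{p,q}$ norm with $\alpha=0$, and then sum over a partition of unity. The only cosmetic difference is that the paper handles the Jacobian factor $\|V\|$ via the auxiliary bounded operator $\Xi$, whereas you absorb $1/\|V\|$ directly into the test function $\tilde\vf_i$; both give the same estimate.
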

\begin{proof}
Let us start by considering the case $\supp\vf\subset [a,a+ \delta]$, for some $a>0$. Then, $\{\phi_t(x)\}_{t\in [a,a+\delta]}$ is the re-parametrization of a curve $W$ in $\Sigma$, provided the constant $K$ in Definition \ref{def:leaves} has been chosen large enough.  To see it just consider the parametrization $\omega_W(s)=\phi_{a+\delta s}(x)$. Moreover, setting $\tilde \vf(\phi_{s}(x))=\vf(s)$,\footnote{ Note that, since the stable manifolds are uniformly $\cC^r$, \cite{KH}, $\|\tilde\vf\|_{\cC^r(W,\bR)}\leq \Const\|\vf\|_{\cC^r(\bRp,\bR)}$.} by \eqref{eq:int-prelim} and \eqref{eq:molli}
\[
\begin{split}
\int_W \tilde\vf \bg&=\int_0^1ds\; \vf\circ \phi_{a+\delta s}(x) \bg( \phi_{a+\delta s}(x), \hV( \phi_{a+\delta s}(x)))\|\hV( \phi_{a+\delta s}(x))\|\delta\\
&=\int_{\bR}ds\;\vf(s) (\Xi\bg)\circ \bphi_s(x,\hV(x))=\bH_{x,\vf}(\Xi \bg).
\end{split}
\]
Since the first quality on the left is exactly one of the functionals used in \eqref{eq:def-norms} to define the norm ($p=0$) and $\Xi^{-1}$ is a bounded operator on each space $\cB^{p,q}$, we have
\[
\left\|\bH_{x,\vf}(\bg)\right\|\leq \Const\|\Xi^{-1}\|_{0,p}\|\vf\|_{\cC^q} \|\bg\|_{0,q}\leq \Const\|\vf\|_{\cC^{p+q}} \|\bg\|_{p,q}.
\]
The Lemma follows then by using a partition of unity. 
\end{proof}
\section{Anisotropic Banach spaces: currents} \label{sec:norms-bis}
In this appendix we briefly describe the Banach spaces of currents used in our second results and sketch the needed facts. We will be much faster than in Appendix \ref{sec:norms},  we will omit several details as the construction is very similar to the previous one and no essentially new ideas are present. 

We consider the same set of admissible leaves detailed in Definition \ref{def:leaves}. For each $W\in\Sigma$, let $\cV^q$ be the set of $\cC^q$ vector fields compactly supported on $W$ and with $\cC^q$ norm bounded by one. Then, for each smooth one form $\bgg$  on $\Omega$ we define
\begin{equation}\label{eq:def-norms-bis}
\|  \bgg\|_{p,q}  := \sup_{W \in \Sigma} \sup_{|\alpha| \leq p}  \sup_{\varphi \in \cV^{q+|\alpha|}}
 \int_{W}  \left[\partial^\alpha (\bgg)\right](\varphi),
\end{equation}
where the integral is defined as in the previous section.

Note that there exists a standard isomorphism $\bi$ from vector fields to one forms, so that $\bgg(\vf)=\langle \bgg,\bi(\vf)\rangle$.\footnote{ See \cite{GLP13} for the relevant definition of scalar product between forms in the present context.} Thus the above norm is equivalent to the norm $\|\cdot\|_{p,q,1}$ used in \cite{GLP13}. Let $\cA$ be the set of $\cC^\infty$ one forms on $\Omega$ such that, for all $v\in\bR^2$, $\bgg((0,v))=0$. If we define $\widehat\cB^{p,q}$ as the closure of $\cA$ with respect to the above norm, we obtain a space isomorphic to a subspace of the space $\cB^{p,q,1}$ defined in \cite{GLP13}.

Unfortunately, the transfer operator used here differs from the one studied in \cite{GLP13} insofar it has a potential, which was absent in \cite{GLP13}. In principle, we should therefore prove the Lasota-Yorke inequality for our operator and compute the spectral radius for the present operator via a variational principle (as in \cite{GouezelLiverani08}). Since such a computation is completely standard but a bit lengthy, we just state a partial result that suffices for our goals (in particular we do not bother computing exactly the spectral radius). Such a result follows by copying the computations made in \cite{GLP13} to obtain the Lasota-Yorke inequality. Such computations are exactly the same, apart from the need to keep track of the potential, which can be done easily:

\begin{itemize}
\item The operator $\widehat\cL_{\bF}$ extends continuously on $\widehat\cB^{p,q}$, has spectral radius $\rho$ and essential spectral radius strictly bounded by $\lambda^{-\min\{p,q\}}\rho$.

\item For all $w\in\cC^r$ and $x\in\bT^2$ and $p+q\leq r$ we have\footnote{ This follows immediately from the definition of the norm.}
\[
\left|\bH^1_{x,w }(\bgg)\right| \leq \Const|\supp w|\|\bgg\|_{p,q}\|w\|_{\cC^{p+q}}.
\]
\end{itemize}
The above two facts are all we presently need.

\addcontentsline{toc}{section}{ References}
\bibliographystyle{amsplain}
\bibliography{bibliografia}

\end{document}